\DeclareFontFamily{U}{rsfs}{} 
\DeclareFontShape{U}{rsfs}{n}{it}{<->
rsfs10}{} \DeclareSymbolFont{mscr}{U}{rsfs}{n}{it}
\DeclareSymbolFontAlphabet{\scr}{mscr}
\def\mathscr{\scr}
\def\Ker{\mathop{\rm Ker}}
\newcommand{\Ad}{\operatorname{Ad}}
\newcommand{\id}{\operatorname{id}}
\newcommand{\fix}{\operatorname{fix}}
\newcommand{\vol}{\operatorname{vol}}
\newcommand{\hol}{\operatorname{hol}}
\renewcommand{\d}{\operatorname{d}}
\newcommand{\tensor}{\otimes}
\def\C{\mathbb{C}}
\def\R{\mathbb{R}}
\def\Z{\mathbb{Z}}
\renewcommand{\|}[1]{\left| \left| #1 \right| \right|}
\newcommand{\kuro}{\color{black}}
\theoremstyle{plain}
\newtheorem{theorem}{Theorem}[section]
\crefname{theorem}{Theorem}{Theorems}
\newtheorem{proposition}[theorem]{Proposition}
\crefname{proposition}{Proposition}{Propositions}
\newtheorem{lemma}[theorem]{Lemma}
\crefname{lemma}{Lemma}{Lemmas}
\newtheorem{corollary}[theorem]{Corollary}
\crefname{corollary}{Corollary}{Corollaries}
\crefname{assertion}{Assertion}{Assertions}
\crefname{property}{Property}{Properties}
\crefname{assumption}{Assumption}{Assumptions}
\theoremstyle{definition}
\newtheorem{definition}[theorem]{Definition}
\crefname{definition}{Definition}{Definitions}
\newtheorem{remark}[theorem]{Remark}
\crefname{remark}{Remark}{Remarks}
\crefname{problem}{Problem}{Problems}
\crefname{example}{Example}{Examples}
\crefname{examples}{Examples}{Examples}
\crefname{open}{Open question}{Open questions}
\crefname{question}{Question}{Questions}
\crefname{exercise}{Exercise}{Exercises}
\newtheorem{convention}[theorem]{Convention}
\crefname{convention}{Convention}{Conventions}
\begin{document}

%%%%%%%%%%%%%%%%%%%%%%%%%%%%%%%%%%%%%%%%%%%%%%%%%%%%%%%%%%%%%%%%%%%%%%%%%%%%%%%%%%%%%%%%%%%%%%%%%%%%%
%%%%%%%%%%%%%%%%%%%%%%%%%%%%%%%%%%%%%%%%%%%%%%%%%%%%%%%%%%%%%%%%%%%%%%%%%%%%%%%%%%%%%%%%%%%%%%%%%%%%%

\title{Spin(7)-instantons on Joyce's first examples of compact Spin(7)-manifolds} 
%%%%%%%%%%%%%%%%%%%%%%%%%%%%%%%%%%%%%%%%%%%%%%%%%%%%%%%%%%%%%%%%%%%%%%%%%%%%%%%%%%%%%%%%%%%%%%%%%%%%%
\author{  
Mateo Galdeano, Daniel Platt, Yuuji Tanaka, and Luya Wang}
\date{}

%%%%%%%%%%%%%%%%%%%%%%%%%%%%%%%%%%%%%%%%%%%%%%%%%%%%%%%%%%%%%%%%%%%%%%%%%%%%%%%%%%%%%%%%%%%%%%%%%%%%%%

\maketitle

%%%%%%%%%%%%%%%%%%%%%%%%%%%%%%%%%%%%%%%%%%%%%%%%%%%%%%%%%%%%%%%%%%%%%%%%%%%%%%%%%%%%%%%%%%%%%%%%%%%%%
\begin{abstract}
We construct $Spin(7)$-instantons on one of Joyce's compact $Spin(7)$-manifolds.
The underlying compact $Spin(7)$-manifold given by Joyce is the same as in Lewis' construction of $Spin(7)$-instantons.  However, our construction method and the resulting instantons are new.
The compact $Spin(7)$-manifold is constructed by gluing a $Spin(7)$-orbifold and certain local model spaces around the orbifold singularities. We construct our instantons by gluing non-flat connections on the local model spaces to a flat connection on the $Spin(7)$-orbifold. 
We deliver more than $20,000$ new four-parameter families of examples of $Spin(7)$-instantons within the structure groups $SO(3), SO(4), SO(5), SO(7)$, and $SO(8)$. 
\end{abstract}
%%%%%%%%%%%%%%%%%%%%%%%%%%%%%%%%%%%%%%%%%%%%%%%%%%%%%%%%%%%%%%%%%%%%%%%%%%%%%%%%%%%%%%%%%%%%%%%%%%%%%%

\setcounter{tocdepth}{3}
\tableofcontents

\section{Introduction}
\label{q1}

$Spin(7)$-instantons are Yang--Mills connections on a $Spin(7)$-manifold, which minimise the Yang--Mills functional and are solutions to a system of non-linear first-order elliptic equations. 
They are among the main objects in Gauge theory in higher dimensions as proposed by Donaldson--Thomas \cite{DoTh} and Donaldson--Segal \cite{DoSe}. 
$Spin(7)$-instantons have similar deformation properties to anti-self-dual instantons on 4-manifolds.
In fact, pursuing the algebro-geometric counterpart of them has recently become one of the hot topics in the research area of virtual enumerative invariants on Calabi--Yau four-folds (see e.g. Borisov--Joyce \cite{BoJo} and Oh--Thomas \cite{OhTh, OhTh2} for foundational works).

However, producing examples of these $Spin(7)$-instantons is known to be a very difficult problem, especially, if the underlying manifolds are compact, 
because of the extensive analysis involved, already apparent in the construction of the underlying compact $Spin(7)$-manifolds, as well as the limited flexibility in achieving surjectivity of the linearised operator in contrast to the situation in lower-dimensional gauge theory. 
In this article, we construct them on an example of compact $Spin(7)$-manifolds by Joyce \cite{Joyc3}. 
We exploit the explicit geometric nature of this example of the underlying compact $Spin(7)$-manifold, which enables us to simultaneously glue rigid Hermitian-Yang--Mills connections on Eguchi--Hanson spaces, one of the building blocks of this example, with rigid background flat connections on the manifold. This is carried out with refined analysis using weighted H\"older norms.

\paragraph{Joyce's examples of compact Spin(7)-manifolds.} The example of compact $Spin(7)$-manifold on which we construct $Spin(7)$-instantons was constructed by Joyce in \cite{Joyc3} (see also \cite{Joyc5}). 
These are the first examples of compact $Spin(7)$-manifolds in the literature decades after Berger's prediction \cite{Berg} of the existence of such manifolds. 
These manifolds are obtained by resolving singularities of $T^8 / \Gamma$, where $T^8$ is an eight-dimensional torus, and $\Gamma$ is a finite subgroup of $\text{Aut} (T^8)$. 
Differential-geometrically, this resolution is given by gluing in Eguchi--Hanson spaces. 
Then Joyce's elaborate analysis gives a family of torsion-free $Spin(7)$-structures on the resolution. 
See Section \ref{subsection:the-manifolds-by-Joyce} for a detailed description of a particular type of them, on which we construct $Spin(7)$-instantons in this paper.

\paragraph{Lewis' construction of Spin(7)-instantons.}

Lewis \cite{Lewi} constructed $Spin(7)$-instantons on one of Joyce's examples of compact $Spin(7)$-manifolds.  
His construction is to place Hermitian-Yang--Mills connections on a smooth K3 surface in a partial resolution of $T^8 / \Gamma$, which becomes a Cayley submanifold after the resolution. He glues then them to the trivial connection, and perturbs the resulting connection alongside the underlying $Spin(7)$-structure of Joyce.  
Here, a Cayley submanifold in a $Spin(7)$-manifold is a real four-dimensional submanifold calibrated by the four-form determining the $Spin(7)$-structure. 
This can be thought of as a Taubes style construction of $Spin(7)$-instantons, since, as Lewis and Tian \cite{Tian} find out, a sequence of $Spin(7)$-instantons may bubble out along a Cayley submanifold, after taking a subsequence and gauge transformations. 
Later, Walpuski \cite{Walp2} beautifully worked out the Taubes construction of $Spin(7)$-instantons, and Lewis's construction can be seen as an example of the Walpuski construction.

\paragraph{Our results.}

We work on the same examples as Lewis did. 
By resolving the orbifold singularity of $T^8/ \Gamma$, Joyce obtains a one-parameter family of smooth $Spin(7)$-manifolds, $M_t$, for $0<t \ll 1$. 
Here, the manifold $M_t$ tends to $T^8 / \Gamma$ as $t \rightarrow 0$.
Some of the singularities in the orbifold $T^8/\Gamma$ have a resolution whose local model is given by $\R^4 \times X$, where $X$ is the Eguchi--Hanson space (see Section \ref{subsection:the-manifolds-by-Joyce} for more details).  While Lewis glues in pullbacks of ASD connections which are concentrated away from the singular sets of the orbifold, we glue in pullbacks of ASD connections over $X$ near the singularities of $T^8/\Gamma$ and a flat connection on the orbifold $T^8/\Gamma$ together with the underlying $Spin(7)$-structures on $M^8$ by Joyce.  (See Figure \ref{fig:ours}). 

The following is our main theorem in this paper, which will be proved as Theorem \ref{th:analytic_construction} at the end of Section \ref{subsection:perturbation-step}: 

%\vspace{0.3cm}
\begin{figure}[hbtp]
    \centering
%    \includesvg{}
    \begingroup%
  \makeatletter%
  \providecommand\color[2][]{%
    \errmessage{(Inkscape) Color is used for the text in Inkscape, but the package 'color.sty' is not loaded}%
    \renewcommand\color[2][]{}%
  }%
  \providecommand\transparent[1]{%
    \errmessage{(Inkscape) Transparency is used (non-zero) for the text in Inkscape, but the package 'transparent.sty' is not loaded}%
    \renewcommand\transparent[1]{}%
  }%
  \providecommand\rotatebox[2]{#2}%
  \newcommand*\fsize{\dimexpr\f@size pt\relax}%
  \newcommand*\lineheight[1]{\fontsize{\fsize}{#1\fsize}\selectfont}%
  \ifx\svgwidth\undefined%
    \setlength{\unitlength}{327.27542024bp}%
    \ifx\svgscale\undefined%
      \relax%
    \else%
      \setlength{\unitlength}{\unitlength * \real{\svgscale}}%
    \fi%
  \else%
    \setlength{\unitlength}{\svgwidth}%
  \fi%
  \global\let\svgwidth\undefined%
  \global\let\svgscale\undefined%
  \makeatother%
  \begin{picture}(1,0.6000766)%
    \lineheight{1}%
    \setlength\tabcolsep{0pt}%
    \put(0,0){\includegraphics[width=\unitlength,page=1]{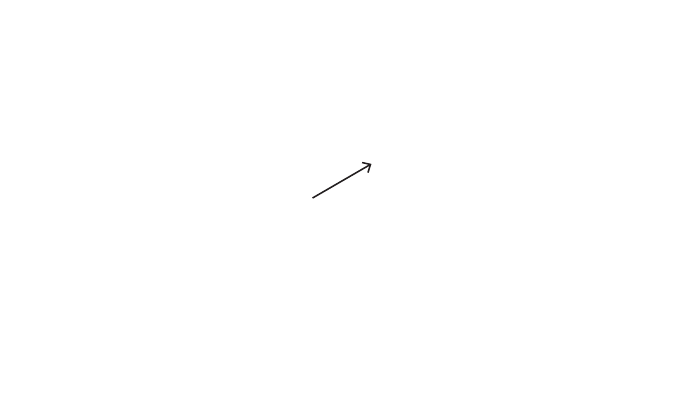}}%
    \put(0.10229916,0.14674262){\color[rgb]{0.1372549,0.12156863,0.1254902}\makebox(0,0)[lt]{\lineheight{1.25}\smash{\begin{tabular}[t]{l}$T^8/\Gamma$\end{tabular}}}}%
    \put(0,0){\includegraphics[width=\unitlength,page=2]{ours_svg-tex.pdf}}%
    \put(0.64461761,0.32317659){\color[rgb]{0.1372549,0.12156863,0.1254902}\makebox(0,0)[lt]{\lineheight{1.25}\smash{\begin{tabular}[t]{l}Lewis’ instanton on $M^8$\end{tabular}}}}%
    \put(0.56757241,0.00404171){\color[rgb]{0.1372549,0.12156863,0.1254902}\makebox(0,0)[lt]{\lineheight{1.25}\smash{\begin{tabular}[t]{l}Our instanton on $M^8$\end{tabular}}}}%
    \put(0,0){\includegraphics[width=\unitlength,page=3]{ours_svg-tex.pdf}}%
  \end{picture}%
\endgroup%
    \caption{Comparison of the instanton construction by Lewis \cite{Lewi} (top right) and our instanton construction (bottom right). }
    \label{fig:ours}
\end{figure}

\begin{theorem}
Let $M_t$ be Joyce's compact $Spin(7)$-manifold described in Section \ref{subsection:the-manifolds-by-Joyce}. 
Let $G$ be a compact Lie group. 
Suppose we are given compatible gluing data in the sense of Definition \ref{definition:compatible-gluing-data} and assume that the flat connection in the compatible gluing data is unobstructed. 
Then, there exist a vector bundle $E_t$ over the manifold $M_t$ with structure group $G$ and a $Spin(7)$-instanton $\tilde{A}_t$ on $E_t$ for sufficiently small $t$. 
\label{th:intro}
\end{theorem}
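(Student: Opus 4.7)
The plan is to execute a gluing/implicit function theorem argument in the spirit of Taubes and Walpuski \cite{Walp2}, but adapted to the present setting, where the ASD connection is pulled back from the Eguchi--Hanson factor $X$ of the local model $\mathbb{R}^4 \times X$, rather than concentrating along a Cayley submanifold. The three broad steps are: building a pre-gluing $A_t$ with a small error $\pi_7(F_{A_t})$; constructing a right inverse $Q_t$ for the linearisation with norm controlled by a power of $t$; and solving the nonlinear equation by a contraction mapping.

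Starting from the compatible gluing data, the first step is to assemble an approximate $Spin(7)$-instanton $A_t$ on a bundle $E_t \to M_t$ by choosing cutoff functions supported in the neck regions and interpolating between the flat connection on the orbifold $T^8/\Gamma$ and the pullbacks (from $X$) of the prescribed ASD connections near the resolved singular strata. Compatibility of the gluing data ensures that the relevant holonomies match in the limit, so the interpolation is meaningful on overlaps. A direct rescaling computation then shows that the error $\pi_7(F_{A_t})$ is concentrated in the neck and that its weighted Sobolev norm is of order $t^{\alpha}$ for an explicit $\alpha > 0$ depending on the gluing parameter.

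The core analytic step, and the one I expect to be the main obstacle, is to produce a right inverse $Q_t$ for the linearised operator $L_{A_t} = \pi_7 \circ d_{A_t}$ between weighted Sobolev spaces of adjoint-valued forms on $M_t$, with a norm bound that is uniform in $t$ up to a controlled power. I would build $Q_t$ by patching two model right inverses with cutoff functions: on the orbifold piece the unobstructedness hypothesis on the flat connection supplies a bounded right inverse, while on each local model $\mathbb{R}^4 \times X$ a right inverse can be obtained from the standard ASD right inverse on $X$ combined with Fourier analysis along $\mathbb{R}^4$. The patching introduces commutator terms supported in the neck that are absorbed by a Neumann series, provided the weights are chosen so that the glued operator is Fredholm and its inverse uniformly controlled. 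The delicacy lies in the fact that the resulting bound on $Q_t$ typically degenerates as $t^{-\beta}$ for some $\beta > 0$, and this $\beta$ must be balanced against the error exponent $\alpha$ from the previous step.

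With $Q_t$ in hand, I would seek the desired instanton in the form $\tilde{A}_t = A_t + Q_t \xi$, which reduces the $Spin(7)$-instanton equation to the fixed-point problem
\[
\xi = -\pi_7(F_{A_t}) - \pi_7\bigl( (Q_t \xi) \wedge (Q_t \xi) \bigr).
\]
Estimating the quadratic term by Sobolev multiplication, this map is a contraction on a ball of radius of order $t^{\gamma}$ in the target Banach space as soon as $\alpha - 2\beta > 0$, which can be arranged for sufficiently small $t$ by a careful choice of weights. The Banach fixed point theorem then produces $\xi$, and $\tilde{A}_t = A_t + Q_t \xi$ is the desired $Spin(7)$-instanton on $E_t$.
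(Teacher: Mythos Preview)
Your overall gluing scheme is correct and matches the paper's, but there are two points worth flagging: one methodological difference and one genuine gap.

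\textbf{Methodological difference.} The paper does not build the right inverse by patching model parametrices and absorbing commutators via a Neumann series. Instead it works in weighted H\"older spaces $C^{k,\alpha}_{\beta,t}$ (not Sobolev) and proves the uniform estimate $\|a\|_{C^{1,\alpha}_{\beta,t}} \le c\,\|L_{A_t}a\|_{C^{0,\alpha}_{\beta-1,t}}$ (and the analogous one for $L_{A_t}^*$) directly by a contradiction/blow-up argument: assuming the estimate fails along a sequence $t_i\to 0$, one rescales near the point where the normalised section attains its maximum and extracts a nonzero limit on one of the model spaces, contradicting rigidity there. A key payoff of the weighted H\"older setup with $\beta\in(-2,0)$ is that the resulting bound on the inverse is \emph{uniform in $t$}; there is no $t^{-\beta}$ degeneration to balance against the pregluing error, so the inequality ``$\alpha-2\beta>0$'' you anticipate never enters. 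Your patching approach can be made to work in principle, but it is not what the paper does.

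\textbf{Genuine gap.} You treat only the local model $\R^4\times X$. In the present $Spin(7)$ example the singular set of $T^8/\Gamma$ also contains 64 isolated points (the type~(iii) singularities, fixed by $\alpha\beta$), and the resolution near each of these is modelled on $X\times X$, not on $\R^4\times X$. The paper devotes an entire subsection (\S\ref{subsection:operator-on-x-x}) to proving that the linearised operator and its adjoint have trivial kernel on $X\times X$ in $C^{1,\alpha}_\beta$ for $\beta<0$ (Proposition~\ref{proposition:trivial-connectionon-X-X-unobstructed-rigid}); this requires a separate rescaling argument using the mean-value inequality on the Ricci-flat QALE space $X\times X$ to reduce to the flat model $(\C^2/\{\pm1\})\times(\C^2/\{\pm1\})$. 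Without this ingredient your patching argument has no model inverse to glue in near those 64 points, and the paper's blow-up proof would likewise fail in its Cases~2.B and~3.B. The authors flag exactly this as the main new analytic content beyond Walpuski's $G_2$ case, so it is not a detail you can absorb into ``Fourier analysis along $\R^4$''.
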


We provide lots of examples (more than 20,000 four-parameter families in total) of these $Spin(7)$-instantons in the above theorem for $G= SO(3)$, $SO(4)$, $SO(5)$, $SO(7)$, $SO(8)$ in Section \ref{section:examples}.

Prior to the construction of compact $Spin(7)$-manifolds \cite{Joyc3}, Joyce performed a similar construction of examples of compact $G_2$-manifolds \cite{Joyc1, Joyc2}.  
They are resolutions of $T^7 / \Gamma$, where $T^7$ is a seven-dimensional torus, and $\Gamma$ is a finite subgroup of $\text{Aut} (T^7)$. 
Walpuski constructed $G_2$-instantons on examples of those compact $G_2$-manifolds in \cite{Walp1} (see \cite{Walp3} as well). 
The work of Walpuski solves lots of technical issues which also arise in our case, hence, we thankfully make use of his estimates in this article. 
However, there are additional difficulties present in the $Spin(7)$ case that were not present in the $G_2$ case.
One problem comes from the construction of the underlying compact $Spin(7)$-manifold:
Sobolev embeddings get worse as the dimension increases, cf. \cite[p.295, Theorem G1]{Joyc5} and \cite[p.360, Theorem S1]{Joyc5}, and because of this the obvious glued $G_2$-structure can be perturbed to a torsion-free one, but the obvious glued $Spin(7)$-structure cannot.
To overcome this, Joyce constructed a correction term for the $Spin(7)$ case, which requires careful treatment when constructing instantons on this space.

\paragraph{Spin(7)-instantons in Physics.}
\label{sec:physics}

$Spin(7)$-instanton equations appeared early on in the physics literature as an 8-dimensional generalisation of the self-duality conditions in 4 dimensions \cite{CoDeFaNu, Ward} (see also \cite{DuGuTz}). Explicit examples of $Spin(7)$-instantons over $\mathbb{R}^8$ were constructed in \cite{FaNu, FuNi, CoGoKe, IvPo} (see \cite{LoMa} for a review). When super Yang--Mills theory is reduced on a $Spin(7)$-manifold, it produces a cohomological Yang--Mills theory \cite{AcFiSpOl} whose observables are topological invariants of the moduli space of $Spin(7)$-instantons \cite{BaKaSi, AcOlSp}, in the spirit of \cite{Witt}.

In string theory, $Spin(7)$-instantons have appeared in both type II and heterotic theories. 
In type II, one can compactify on a $Spin(7)$-manifold \cite{ShVa, Acha2, FiGa}, and wrap $n$ euclidean $D$-branes \cite{BeBeMoOoOzYi} around it. The resulting worldvolume theory is $SU(n)$ super Yang--Mills (the gauge groups $SO(n)$ and $Sp(n)$ appear if orientifold planes are included) \cite{Polc}, and $Spin(7)$-instantons describe particular background configurations in this setting \cite{MaMiMoSt}. 
On the other hand, heterotic supergravity includes an $\mathcal{N}=1$ super Yang--Mills multiplet and the background gauge fields must satisfy an instanton condition (see \cite{CaHoStWi,Stro, Hull} and the review \cite{Garc}). Compactifications on 8-dimensional manifolds require a conformally balanced $Spin(7)$-structure \cite{sIvan1, GaMaWaKi}---or holonomy $Spin(7)$ when the NS gauge field is zero---and both the gauge connection \cite{GaMaWa} as well as an additional tangent bundle connection \cite{sIvan2} must be $Spin(7)$-instantons. 
Such backgrounds have been obtained in \cite{HaSt, tIvan, AcOl, IvIv, HaNo}, yet compact solutions are extremely scarce \cite{FeIvUgVi} and it would be interesting to use the instantons we obtain in this paper to construct examples of this type.

\paragraph{Structure of the paper.} 
Section \ref{section:joyces-construction-of-spin7-mf} is a review of Joyce's construction of compact $Spin(7)$-manifolds.  After recalling the definition of and some facts on $Spin(7)$-manifolds, we describe Joyce's first construction of compact $Spin(7)$-manifolds, especially focusing on one example of them, on which we construct $Spin(7)$-instantons. 

Section \ref{sec:spin7instantons} addresses basic facts on $Spin(7)$-instantons.

We start our construction in Section \ref{sec:6}. We introduce the relevant weighted H\"{o}lder norm and prove an improved estimate for the torsion-free $Spin(7)$-structure, which we need in the later sections. We then give the definition of the compatible gluing data, and construct our approximate solution in Section \ref{subsection:approximate-solutions}. We prove an estimate for the approximate solution in Section \ref{subsection:pregluing-estimate}. 

Section \ref{sec:linear} studies the linearised operators of the problem on $\R^4 \times X$ and $X \times X$, where $X$ is the Eguchi--Hanson space. These give local models for our gluing construction. 
In Section \ref{section:construction}, we prove our analytic gluing theorem of $Spin(7)$-instantons, using the estimates in Sections \ref{sec:6} and \ref{sec:linear}. 
We give infinitely many examples of our construction for the structure groups $SO(3), SO(4), SO(5)$, $SO(7)$, $SO(8)$ by producing suitable flat connections on the orbifold $T^8 / \Gamma$ in Section \ref{section:examples}.

\vspace{0.3cm}
\noindent{\it Acknowledgements.}  
This project was started when the authors attended the British Isles Graduate Workshop III in Jersey in 2019, and we are very grateful to the organisers and participants for providing a stimulative research environment. We thank Panagiotis Angelinos, Shih-Kai Chiu, Yoshihiro Fukumoto, Xenia de la Ossa, Lorenzo Foscolo, Dominic Joyce, Fabian Lehmann, Jason Lotay, Holly Mandel, Simon Salamon, Jesus Sanchez, and Thomas Walpuski for helpful correspondence and discussions. 

M.G. was supported by a scholarship from the Mathematical Institute, University of Oxford, as well as the Deutsche Forschungsgemeinschaft (DFG, German Research Foundation) under Germany’s Excellence Strategy – EXC 2121 ``Quantum Universe'' – 390833306.  D.P. was supported by the Eric and Wendy Schmidt AI in Science Postdoctoral Fellowship, a Schmidt Futures program. D.P. and Y.T. were partially supported by the Simons Collaboration on Special Holonomy in Geometry, Analysis and Physics. Y.T. was partially supported by JSPS Grant-in-Aid for Scientific Research numbers 20H00114, 20K03582, 21H00973, 21K03246, 23H01073. L.W. was supported by NSF GRFP and Award No. 2303437.

\section{The first examples of compact Spin(7)-manifolds by Joyce} 
\label{section:joyces-construction-of-spin7-mf}
\label{subsection:approximate-spin7-structures}

\subsection{Spin(7)-manifolds}

We define the group $Spin(7)$ as the double cover of the group $SO(7)$, which is $21$-dimensional, compact, connected, simply-connected, and semisimple Lie group. 
However, it will be more relevant for our purposes to use an alternative definition, namely, we describe $Spin(7)$ as a subgroup of $GL(8,\R)$, motivated by the fact that the Spin representation of the group $Spin(7)$ is eight-dimensional (see Joyce \cite{Joyc5}).

Let $\R^8$ have coordinates $(x_1,\dots,x_8)$. 
Write $d x_{ijkl}=d x_i\wedge d x_j\wedge d x_l\wedge d x_k$ and define the following four-form:
\begin{align*}
\Omega_0=d x_{1234}+d x_{1256}+d x_{1278}+d x_{1357}-d x_{1368}-d x_{1458}-d x_{1467}-\\
-d x_{2358}-d x_{2367}-d x_{2457}+d x_{2468}+d x_{3456}+d x_{3478}+d x_{5678}.
\end{align*}
Then, the group $Spin(7)$ is the subgroup of $GL(8,\R)$ preserving $\Omega_0$.
We call $\Omega_0$ the {\it standard $Spin(7)$-structure} on $\R^8$. 

Note that given the Euclidean metric $g_0=d x_1^2+\cdots+d x_8^2$ and an orientation, $\Omega_0$ is a self-dual four-form, that is, $*\Omega_0=\Omega_0$.

Now let $M$ be an eight-dimensional manifold.  
We consider a pair $(\Omega , g)$ consisting of a Riemannian metric $g$ and a four-form $\Omega$ on $M$, and call it {\it $Spin(7)$-structure} if there exists an isometry between $T_pM$ and $\R^8$ identifying $\Omega|_p$ and $\Omega_0$ for all $p \in M$.
We say $\nabla \Omega$ is the {\it torsion} of the $Spin(7)$-structure, where $\nabla$ is the Levi-Civita connection of $g$, and $(\Omega , g)$ is {\it torsion-free} if $\nabla \Omega =0$.

\begin{theorem}[Section 12 in \cite{Sala} and Proposition 10.5.3 in \cite{Joyc5}]
Let $M$ be an eight-dimensional manifold with $Spin(7)$-structure $(\Omega, g)$. Then the following are equivalent:
\begin{enumerate}
\item $(\Omega,g)$ is torsion-free; 
\item $\text{Hol}(g)\subseteq Spin(7)$;  and 
\item $\d \Omega=0$ on $M$.
\end{enumerate}
\end{theorem}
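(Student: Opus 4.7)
The plan is to prove the chain $(1) \Leftrightarrow (2)$, $(1) \Rightarrow (3)$, and $(3) \Rightarrow (1)$, where the first two are standard while the bulk of the work lies in the direction $(3) \Rightarrow (1)$.

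For $(1) \Leftrightarrow (2)$, I would appeal to the general fact from $G$-structure theory: if $P \subset F(M)$ is the principal $Spin(7)$-subbundle of the frame bundle determined by $\Omega$, then $\nabla \Omega = 0$ is equivalent to the Levi--Civita connection restricting to a connection on $P$, which is in turn equivalent to parallel transport preserving $\Omega$, i.e.\ to $\mathrm{Hol}(g) \subset \mathrm{Stab}(\Omega_0) = Spin(7)$. For $(1) \Rightarrow (3)$, the identity $\d \Omega = \mathrm{Alt}(\nabla \Omega)$, valid for any torsion-free connection, makes the implication immediate.

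For the substantive step $(3) \Rightarrow (1)$, the plan is to identify $\nabla \Omega$ and $\d \Omega$ at the level of $Spin(7)$-representations. The intrinsic torsion of a $Spin(7)$-structure is a section of the bundle associated to $T^*\R^8 \otimes (\mathfrak{so}(8)/\mathfrak{spin}(7))$; using the splitting $\mathfrak{so}(8) = \mathfrak{spin}(7) \oplus \Lambda^2_7$, this lives in $\Lambda^1_8 \otimes \Lambda^2_7$, which decomposes under $Spin(7)$ into an irreducible $8$-dimensional piece and an irreducible $48$-dimensional piece. On the other hand, $\d \Omega$ is a $5$-form, and $\Lambda^5 T^*M \cong \Lambda^3 T^*M$ under Hodge duality splits into $Spin(7)$-irreducibles of the same dimensions. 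I would then show that the natural $Spin(7)$-equivariant linear map sending the intrinsic torsion to $\d \Omega$ is non-zero on each irreducible summand; by Schur's lemma this forces it to be an isomorphism, so $\d \Omega = 0$ implies the intrinsic torsion vanishes, and hence $\nabla \Omega = 0$.

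The hard part will be the representation-theoretic check above: identifying the $Spin(7)$-decompositions of $\Lambda^1_8 \otimes \Lambda^2_7$ and $\Lambda^5 T^*M$, and explicitly verifying that the equivariant torsion-to-$\d \Omega$ map is non-trivial on each irreducible summand (reducing via Schur to a single model computation per summand). This is precisely the content of Fern\'andez's classification of $Spin(7)$-structures, so in practice I would invoke his result via the references to Salamon and Joyce already cited in the statement, rather than redo the representation theory from scratch.
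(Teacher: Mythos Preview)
The paper does not give its own proof of this theorem; it simply cites it from Salamon and Joyce. Your proof plan is correct and is precisely the standard argument found in those references (the representation-theoretic identification of intrinsic torsion with $\d\Omega$ via the Fern\'andez classification), so there is nothing further to compare.
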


\begin{definition}
A \emph{$Spin(7)$-manifold} is a triple $(M,\Omega,g)$ where $M$ is an 8-dimensional manifold and $(\Omega,g)$ is a torsion-free $Spin(7)$-structure.
\end{definition}

\subsection{Examples of compact Spin(7)-manifolds by Joyce}
\label{subsection:the-manifolds-by-Joyce}

The first construction of compact manifolds with holonomy $Spin(7)$ was obtained by Joyce \cite{Joyc3} (see also \cite[\S 13--14]{Joyc5}). 
The underlying topological space for this construction is a resolution $M$ of $T^8/\Gamma$, where $T^8$ is an eight-dimensional torus, and $\Gamma$ is a finite subgroup of $\operatorname{Aut}(T^8)$. 

In this paper, we are interested in one example of compact $Spin(7)$-manifolds in \cite{Joyc3}, which we describe in more detail later in this section. In that example, the resolution of each component of the singular set of $T^8/\Gamma$ is locally an open subset of the product of the Eguchi--Hanson space $X$ with itself, namely, $X \times X$,  or $ \R^4 \times X$.

For carefully chosen $\Gamma$, one has $\hat{A}(M) = 1$, which guarantees that the holonomy group of $M$ equals to $Spin(7)$, provided that there exists a torsion-free $Spin(7)$-structure on $M$. 
Here, $\hat{A}(M)$ is the $A$-hat genus of $M$. 
Joyce proves the existence of such a structure in the following form:  

\begin{theorem}[Theorem 13.6.1 and Proposition 13.7.1 in \cite{Joyc5}]
Let $\lambda$, $\mu$, $\nu >0$ be constants. Then there exist 
constants $\kappa, K >0$ such that for $0 < t \leq \kappa$ the 
following holds. 
Let $M$ be a compact 8-manifold, and $(\Omega_{t} , g_{t})$ a family of 
 $Spin(7)$-structures 
on $M$. 
Suppose that $\phi_{t}$ is a family of four-forms on $X$ 
with $\d \Omega_{t} + \d \phi_{t} = 0$, and 
\begin{enumerate}
\item[$(i)$] $ || \phi_{t} ||_{L^2} \leq \lambda t^{{13}/{3}}$ 
   and $|| \d \phi_{t} ||_{L^{10}} \leq \lambda t^{{7}/{5}}$;  
\item[$(ii)$] the injectivity radius 
$\delta (g_{t})$ satisfies $ \delta (g_{t}) \geq \mu t$; 
and 
\item[$(iii)$] the Riemannian curvature $R(g_{t})$ 
satisfies $|| R(g_{t}) ||_{C^{0}} \leq \nu t^{-2}. 
$\end{enumerate} 
Then there exists a smooth torsion-free $Spin(7)$-structure 
$( \tilde{\Omega}_{t} , \tilde{g}_{t})$ on $M$ such that 
$|| \tilde{\Omega}_{t} -\Omega_{t} ||_{C^{0}} \leq K t^{{1}/{3}}$ and 
$|| \nabla ( \tilde{\Omega}_{t} - \Omega_{t} ) ||_{L^{10}} \leq K t^{{2}/{15}}$.  
\label{th:Joyce_tfsp7}
\end{theorem}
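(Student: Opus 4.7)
The plan is to produce $\tilde{\Omega}_t$ as a small perturbation of $\Omega_t$ within the submanifold of pointwise $Spin(7)$-forms, via a Picard iteration whose constants are tracked carefully in $t$. Under $Spin(7)$ the bundle of 4-forms decomposes pointwise as $\Lambda^4 T^*M = \Lambda^4_1 \oplus \Lambda^4_7 \oplus \Lambda^4_{27} \oplus \Lambda^4_{35}$, and I would parametrise nearby $Spin(7)$-forms by a smooth map $\alpha \mapsto \Omega_t + \Phi(\alpha)$, where $\alpha$ is a section of the self-dual subbundle $\Lambda^4_1 \oplus \Lambda^4_7 \oplus \Lambda^4_{27}$ and $\Phi$ is a pointwise nonlinear map with $\Phi(0)=0$, $\d\Phi(0)=\id$, chosen so that $\Omega_t + \Phi(\alpha)$ always lies in the $Spin(7)$-orbit. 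The torsion-free equation $\d(\Omega_t + \Phi(\alpha))=0$ then reads
\begin{equation*}
\d\alpha \;=\; \d\phi_t - \d\bigl(\Phi(\alpha) - \alpha\bigr),
\end{equation*}
coupled with the algebraic self-duality constraint. Imposing a co-closed gauge $\d^*\alpha = 0$ (or equivalently writing $\alpha$ in terms of a 3-form potential) turns this into an elliptic quasilinear equation for $\alpha$ with small right-hand side.

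I would then solve iteratively: let $\alpha_0$ be the $L^2$-smallest solution of the linearisation $\d\alpha_0 = \d\phi_t$, and define $\alpha_{n+1}$ by the same linear equation with right-hand side $\d\phi_t - \d(\Phi(\alpha_n) - \alpha_n)$. Convergence requires three analytic inputs on $(M, g_t)$: an $L^p$-elliptic estimate for $\d + \d^*$ on the self-dual subbundle; the Sobolev embedding $W^{1,10} \hookrightarrow C^0$; and a Poincar\'e-type inequality $||\alpha||_{L^2} \leq C || \d\alpha ||_{L^2}$ on the $\d^*$-coexact subspace. The crucial feature is that each of these constants must depend only on the bounds $\delta(g_t) \geq \mu t$ and $||R(g_t)||_{C^0} \leq \nu t^{-2}$ from hypotheses $(ii)$, $(iii)$. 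These are obtained by the standard rescaling trick: rescale $g_t$ by $t^{-1}$ to reach a geometry with bounded curvature and uniformly positive injectivity radius, apply the classical estimates, then rescale back. This rescaling is precisely what introduces the explicit powers of $t$ in the final bounds.

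The hardest part is closing the iteration with the exact exponents in the statement. The nonlinearity $\Phi(\alpha)-\alpha$ is quadratic to leading order, so the error term $\d(\Phi(\alpha)-\alpha)$ schematically contains $|\alpha|\,|\nabla\alpha|$ plus cubic-and-higher corrections. Bounding these in $L^2$ and $L^{10}$ requires controlling $||\alpha||_{C^0}$ via Sobolev embedding (whose constant scales as a negative power of the injectivity radius, hence of $t$), while $||\nabla\alpha||_{L^{10}}$ is controlled via the elliptic estimate by $||\d\phi_t||_{L^{10}}$ and the previous iterate. The exponents $13/3$, $7/5$ in hypothesis $(i)$ and $1/3$, $2/15$ in the conclusion are precisely those for which each round of the iteration absorbs a positive power of $t$; verifying this balance is the delicate accounting at the heart of Joyce's analysis. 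Once convergence is established in a suitable $W^{2,10}$-type norm, $\tilde{\Omega}_t := \Omega_t + \Phi(\alpha_t)$ is by construction a closed $Spin(7)$-form, hence a torsion-free $Spin(7)$-structure by the equivalence quoted just above, and smoothness together with the claimed $C^0$ and $L^{10}$ bounds on $\tilde{\Omega}_t - \Omega_t$ follow by reading off norms of the fixed point and applying standard elliptic bootstrap.
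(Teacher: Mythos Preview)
The paper does not prove this theorem; it is quoted from Joyce's book and original paper as a black box. Your sketch captures the broad architecture of Joyce's argument---perturbation within the space of $Spin(7)$-forms, Picard iteration, elliptic estimates whose $t$-dependence is tracked via rescaling using hypotheses (ii) and (iii)---but there is a specific error in the parametrisation that would prevent the argument from going through as written.

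You propose to parametrise nearby $Spin(7)$-forms by sections of the \emph{self-dual} bundle $\Lambda^4_+ = \Lambda^4_1 \oplus \Lambda^4_7 \oplus \Lambda^4_{27}$, via a nonlinear map $\Phi$ with $\d\Phi(0) = \id$. But the tangent space to the $GL(8,\R)$-orbit of $\Omega_0$ inside $\Lambda^4$ is $\Lambda^4_1 \oplus \Lambda^4_7 \oplus \Lambda^4_{35}$, with $\Lambda^4_{27}$ being precisely the \emph{normal} direction to the orbit. So no map $\Phi: \Lambda^4_+ \to \Lambda^4$ with $\d\Phi(0) = \id$ can take values in the orbit: the $\Lambda^4_{27}$-component moves you off to first order. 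Joyce's actual parametrisation, visible in this paper in the proof of \cref{proposition:spin-7-structure-holder-estimate}, uses the \emph{anti}-self-dual bundle $\Lambda^4_- = \Lambda^4_{35}$: for small $\eta \in \Lambda^4_{35}$ there is a unique $\Theta(\eta) \in \Lambda^4_{27}$ with $\Omega + \eta - \Theta(\eta)$ a $Spin(7)$-form, and the torsion-free equation becomes $\d\eta = \d\phi_t + \d F(\eta)$ with $F$ built from $\Theta$. A related point: your gauge condition $\d^*\alpha = 0$ is not an independent condition once $\alpha$ is (anti-)self-dual, since on $4$-forms in dimension $8$ one has $\d^*\alpha = \mp * \d\alpha$; the operator $\d$ on $\Omega^4_-$ is already overdetermined elliptic and no separate gauge fixing is needed. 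With these two corrections (parametrise by $\Lambda^4_{35}$, drop the gauge condition), the rest of your outline---iteration, rescaled Sobolev and elliptic constants, quadratic control of the nonlinearity---matches Joyce's strategy.
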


Here $g_t$ is the metric induced by $\Omega_t$, that is, the metric with respect to which $Spin(7)$ is a subgroup of $SO(8)$.

To construct the data of the assumption in Theorem \ref{th:Joyce_tfsp7}, we interpolate between the torsion-free $Spin(7)$-structures on the smooth locus of $T^8/\Gamma$ and on the resolutions of the components of the singular set. In the example we describe below, the latter carries $Spin(7)$ structures because the Eguchi--Hanson space $X$ carries a Ricci-flat ALE metric, and therefore has holonomy $SU(2)$.
Hence, the local resolutions have holonomy $SU(2) \times SU(2)$,  or $\lbrace 1 \rbrace \times SU(2)$. 
Since each of these is a subgroup of $Spin(7)$, this implies that each local resolution carries a $Spin(7)$-structure. 
The $Spin(7)$-structure asymptotes to the $Spin(7)$-structure on the smooth locus of $T^8/\Gamma$ on the overlap region.

The interpolation between these two structures requires significant additional work. 
Unlike the $G_2$ case, a straightforward use of partitions of unity is not sufficient, because one would only achieve $\Vert \phi_t \Vert_{L^2} = O (t^4)$, whereas we require that $\Vert \phi_t \Vert_{L^2} = o(t^4)$. 
A more detailed construction allowed Joyce to cancel the fourth-order terms and apply Theorem \ref{th:Joyce_tfsp7}. 

\paragraph{An example of the construction by Joyce.}
Here, we describe in detail one type of compact $Spin(7)$-manifolds constructed in \cite{Joyc3}. 
Similar arguments for other choices of $\Gamma$ lead to topologically distinct $Spin(7)$-manifolds. 

Consider $T^8 := (\mathbb{R}/\mathbb{Z})^8$ with coordinates $(x_1,...,x_8)$, and carrying the flat $Spin(7)$-structure $\Omega_0$.
The group $\Gamma$, generated by the following involutions, acts on $T^8$:
\begin{align}
\label{equation:alpha-beta-gamma-delta-def}
 \begin{split}
 \alpha : 
 (x_1, \dots, x_8) 
 & \mapsto 
 (-x_1, -x_2, -x_3, -x_4, 
 x_5, x_6, x_7, x_8);
 \\
 \beta : 
 (x_1, \dots, x_8) 
 & \mapsto 
 (x_1, x_2, x_3, x_4, 
 -x_5, -x_6, -x_7, -x_8); 
 \\
 \gamma :
 (x_1, \dots, x_8) 
 & \mapsto 
 \left( \frac{1}{2}-x_1, \frac{1}{2}-x_2, x_3, x_4, 
 \frac{1}{2}-x_5, \frac{1}{2}-x_6, x_7, x_8 \right); \text{ and}
 \\
 \delta :
 (x_1, \dots, x_8) 
 & \mapsto 
 \left( -x_1, x_2, \frac{1}{2}-x_3, x_4, 
 \frac{1}{2}-x_5, x_6, \frac{1}{2}-x_7, x_8 \right).
 \end{split}
\end{align}
Note that $\Gamma$ preserves $\Omega_0$ and $\Gamma \cong \mathbb{Z}_2^4$. 

To understand how $T^8/\Gamma$ is resolved, we first determine the fixed locus of each element of $\Gamma$. 
Then we investigate how $\Gamma$ acts on a neighborhood of the fixed locus. 
%Different choices of $\Gamma$ lead to different singular loci in the quotient and require topologically distinct resolutions. 
Our choice of $\Gamma$ is made to achieve $\hat{A}(M) = 1$ so that the holonomy group of the resolved manifold $M$ coincides with the group $Spin(7)$.

\begin{itemize}
\item 
The fixed locus of $\alpha$ is equal to $\{p_i\} \times T^4$ for 16 $p_i \in {T}^4$.
For each $p_i$, the action of $\alpha$ on the first factor is the quotient of $T^4$ by $\mathbb{Z}_2$, while in a neighborhood of $p_i$ on the second factor the action of $\beta$ is modelled by the action of $\mathbb{Z}_2$ on $\mathbb{R}^4$ on the second component. 
On the other hand, $\gamma$ and $\delta$ act freely on the fixed set of $\alpha$, dividing its 16 components into $4$ sets of $4$.
The resulting singularities in $T^8/\Gamma$, are called \emph{singularities of type (ii)} in \cite{Joyc3}, and we can resolve each component of the fixed locus of $\alpha$ by ${T}^4/\{\pm 1\}  \times U$, where $U$ is an open subset of the Eguchi--Hanson space. 
We leave the first factor unresolved because the singular point is included in the singular set of $\alpha \beta$, so a different resolution will be glued to a neighborhood of this point.

\item 
The discussion for the fixed locus of $\beta$ is identical to the case for $\alpha$ described above, up to the order of the factors and the resulting singularities in $T^8/\Gamma$. 

\item 
The fixed locus of $\alpha \beta$ is 256 points. 
Also, $\gamma$ and $\delta$ act freely on these points, identifying them into 64 sets of 4. 
Locally, the singularities at these points are modelled by neighborhoods of $0$ in $\mathbb{R}^4/\{\pm 1\} \times \mathbb{R}^4/\{\pm 1\}$, are called \emph{singularities of type (iii)} in \cite{Joyc3} and are resolved by $U \times U$.

\item 
The fixed locus of $\gamma$ is 16 copies of ${T}^4$, and $\alpha$, $\beta$, and $\delta$ act freely on these components, identifying them into 2 sets of 8.
These singularities are called \emph{singularities of type (i)} in \cite{Joyc3}.
A neighborhood of each component is isomorphic to $T^4 \times B^4/ \{\pm 1\} $, so the resolution is ${T}^4 \times U$. 

\item 
The discussion for $\delta$ is identical to the discussion for $\gamma$.  
\end{itemize}

We label the fixed loci of elements of $\Gamma$ by $S_j$ for $j \in \{1, \dots, 76 \}$ as displayed in \cref{table:singular-set-in-t8} and also write $S=\cup_{j \in \{1,\dots,76\}} S_j$ for the singular set of $T^8/\Gamma$.
For some small $\zeta>0$ ($\zeta=1/9$ will do) we have that each of the $S_j$ then has a neighbourhood isomorphic to $T^4/\{\pm 1\} \times B^4_{\zeta}/ \{ \pm 1 \}$, $B^4_{\zeta}/ \{ \pm 1 \} \times B^4_{\zeta}/ \{ \pm 1 \}$, or $T^4 \times B^4_{\zeta}/ \{ \pm 1 \}$ considered as Spin(7)-orbifolds, and those neighbourhoods will be denoted by $T_j$ for $j \in \{1, \dots, 76 \}$.
Note that $T:=\bigcup_{j \in \{1, \dots, 76\}} T_j$ consists of five connected components:
\begin{align}
\label{equation:definition-C1-C2-C3-C4-C5}
 C_1:=T_{73},C_2:=T_{74},C_3:=T_{75},C_4:=T_{76}, \text{ and } C_5:=T_1 \cup \dots \cup T_8, 
\end{align}
where $C_5$ also contains $T_j$ for $j \in \{9, \dots, 72\}$. 

\begin{table}[h]
\begin{center}
\begin{tabular}{@{}cccc@{}}
\toprule
                               & Type  & Neighbourhood                                & Label                   \\ \midrule
$\fix(\alpha)$, $\fix(\beta)$  & (ii)  & $T^4/\{\pm 1\} \times B^4_{\zeta}/ \{ \pm 1 \}$     & $S_1, \dots, S_8$       \\
$\fix(\alpha \beta)$           & (iii) & $B^4_{\zeta}/ \{ \pm 1 \} \times B^4_{\zeta}/ \{ \pm 1 \}$ & $S_9, \dots, S_{72}$    \\
$\fix(\gamma)$, $\fix(\delta)$ & (i)   & $T^4 \times B^4_{\zeta}/ \{ \pm 1 \}$               & $S_{73}, \dots, S_{76}$ \\ \bottomrule
\end{tabular}
\caption{Labels for the singular loci in $T^8/\Gamma$.}
\label{table:singular-set-in-t8}
\end{center}
\end{table}

Denote by $\rho: X \rightarrow \C^2/\{ \pm 1\}$ Eguchi--Hanson space from \cref{proposition:eguchi-hanson}.
For each $t \in (0,1)$ and $j \in \{1, \dots, 76\}$ we define a resolution $\pi_{j,t}:\tilde{T}_{j,t} \rightarrow T_j$ as follows:
\begin{itemize}
\item
For $j \in \{1, \dots, 8\}$; $\pi_{j,t}:\tilde{T}_{j,t} = T^4/\{\pm 1\} \times \rho^{-1}(B^4_{t^{-1}\zeta}/ \{ \pm 1 \}) \rightarrow T_j$ is given by $\pi_{j,t}(x,y)=(x, t \cdot \rho(y))$. 

\item
For $j \in \{9, \dots, 72\}$; $\pi_{j,t}:\tilde{T}_{j,t} = \rho^{-1}(B^4_{t^{-1}\zeta}/ \{ \pm 1 \}) \times \rho^{-1}(B^4_{t^{-1}\zeta}/ \{ \pm 1 \}) \rightarrow T_j$ is given by $\pi_{j,t}(x,y)=(t \cdot \rho(x), t \cdot \rho(y))$. 

\item
For $j \in \{73, \dots, 76\}$; $\pi_{j,t}:\tilde{T}_{j,t} = T^4 \times \rho^{-1}(B^4_{t^{-1}\zeta}/ \{ \pm 1 \}) \rightarrow T_j$ is given by $\pi_{j,t}(x,y)=(x, t \cdot \rho(y))$.
\end{itemize}

For later use we also define the \emph{regular part of $\tilde{T}_{j,t}$}:
\begin{itemize}
\item
For $j \in \{1, \dots, 8\}$; 
$\tilde{T}_{j,t}^{\text{reg}}
:=
(T^4/\{\pm 1\} \setminus \fix(-1)) \times \rho^{-1}(B^4_{t^{-1}\zeta}/ \{ \pm 1 \})$,

\item
For $j \in \{9, \dots, 76\}$; 
$\tilde{T}_{j,t}^{\text{reg}} := \tilde{T}_{j,t}$.
\end{itemize}

By appropriately identifying points in the resolution of the overlapping areas, these spaces glue to a resolution $\pi: M \rightarrow {T}^8/\Gamma$. 
The usefulness of this construction is that all resolutions are modelled on products of the Eguchi--Hanson space with itself or flat space. 
The explicit formula for the Eguchi--Hanson metric allows us to describe the geometry as we rescale the metric on the resolutions by $t^2$.

%On an eight-manifold carrying a $Spin(7)$-structure, the $\hat{A}$ genus is computed as a linear combination of the Betti numbers (see \cite[\S 10.6]{Joyc5} and \cite[Theorem 7.3]{Sala2}). 
%Therefore, one can check $\hat{A}(M) = 1$ by computing the Betti numbers of each of the spaces comprising the resolutions described above and then use Mayer--Vietoris arguments to compute the Betti numbers of $M$ (see \cite[\S 14.2]{Joyc5}). 

The above eight-manifold $M$ carries a $Spin(7)$-structure obtained as follows:
the sets $\tilde{T}_{j,t}$ carry product $Spin(7)$-structures $\Omega_{j,t}$.
In \cite[p. 535]{Joyc3}, the smooth $Spin(7)$-structure $\Omega_t \in \Omega^4(M)$ is defined, which satisfies the following estimate:

\begin{proposition}
\label{proposition:spin-7-difference-to-product-structure}
 There exists a constant $c>0$ such that the following is true:
 on $M \setminus \pi^{-1} \left( \bigcup_{j=1}^{76} T_j \right)$ we have
 \[
  \|{ \Omega_t-\Omega_0 }_{C^{0,\alpha}}
  \leq
  ct^{7/2}.
 \]
 For $j \in \{1, \dots, 76\}$ we have on $\pi^{-1}(T_j)$ the estimate
 \[
  \|{ \Omega_t-\Omega_j }_{C^{0,\alpha}}
  \leq
  ct^{7/2}.
 \]
\end{proposition}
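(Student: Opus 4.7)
The plan is to unpack Joyce's explicit definition of $\Omega_t$ in \cite[p.~535]{Joyc3} and estimate the resulting difference in each geometrically distinct region separately. Recall that Joyce's approximate $Spin(7)$-structure is built by patching the product structure $\Omega_{j,t}$ on each resolved neighbourhood $\tilde{T}_{j,t}$ to the flat structure $\Omega_0$ on the smooth locus of $T^8/\Gamma$, using cut-off functions together with a carefully chosen primitive correction term designed to cancel the leading-order contribution to $\d\Omega_t$ (which is required for the $L^2$ estimate in Theorem~\ref{th:Joyce_tfsp7}).

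First I would split $M$ into three kinds of regions. In the inner part of each $\pi^{-1}(T_j)$, sufficiently close to the exceptional divisor, the approximate structure agrees with $\Omega_{j,t}$ by construction, so $\Omega_t - \Omega_j$ vanishes identically there. On the bulk region $M \setminus \pi^{-1}(\bigcup_j T_j)$, if $\Omega_t$ equals $\Omega_0$ identically then the estimate is trivial; otherwise any contributions from Joyce's correction term that leak past the fixed neighbourhoods $T_j$ are estimated by the same methods as in the transition annulus. The substantive work is thus on the transition annulus inside each $\pi^{-1}(T_j)$, where $\Omega_t$ smoothly interpolates between $\Omega_{j,t}$ and $\Omega_0$.

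On the transition annulus I would use the standard ALE expansion of the Eguchi--Hanson four-form: in the rescaled chart where the exceptional divisor has diameter $O(t)$, one has $\Omega_{EH,t} - \Omega_0 = O(t^4 r^{-4})$, with derivatives scaling correspondingly. Substituting the specific transition radius chosen by Joyce into this decay estimate yields the desired pointwise $O(t^{7/2})$ bound. To upgrade from $C^0$ to $C^{0,\alpha}$, I would work in local coordinates adapted to the rescaled Eguchi--Hanson metric, in which the geometry has uniformly bounded curvature on balls of a fixed radius; the H\"older seminorm of the difference then follows by combining the $C^0$ estimate with a matching $C^1$ estimate from the same asymptotic expansion, via the standard $C^0$--$C^1$ interpolation inequality for H\"older norms.

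The main obstacle will be verifying that Joyce's primitive correction term does not spoil the bound at order $t^{7/2}$. Since this correction is designed to achieve a cancellation in the $L^2$ norm rather than pointwise, its pointwise size and the pointwise size of its first derivative must be estimated directly from Joyce's explicit formula and checked to be at most of the same order as the Eguchi--Hanson error itself in the transition region.
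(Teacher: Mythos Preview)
Your approach is sound, but the paper's proof is far shorter because it cites existing estimates rather than rederiving them. The paper introduces the auxiliary four-form $X_t$ defined in \cite{Joyc3} and simply quotes two results already proved there: from \cite[Proposition~4.2.3]{Joyc3} one has $|\nabla(X_t-\Omega_0)|=O(t^{7/2})$ and $|\nabla(X_t-\Omega_t)|=O(t^{7/2})$, hence $|\nabla(\Omega_t-\Omega_0)|=O(t^{7/2})$, and from \cite[Proposition~4.2.2]{Joyc3} one has $|\Omega_t-\Omega_0|=O(t^4)$; combining the $C^0$ bound with the gradient bound immediately yields the $C^{0,\alpha}$ estimate. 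Your plan---splitting into inner, bulk, and transition regions, inserting the ALE decay $O(t^4 r^{-4})$ at Joyce's transition radius, and then checking that the primitive correction term behaves---is essentially the content of Joyce's Propositions~4.2.2 and~4.2.3 themselves. So what you propose would work, but you would be reproving those propositions rather than invoking them; in particular the ``main obstacle'' you identify (controlling the correction term) is exactly what Joyce already handles in that pair of results.
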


\begin{proof}
    We write out the proof for the first estimate, the proof for the second is analogous.
    In \cite[Equation 34]{Joyc3}, the four-form $X_t$ is defined and $\Omega_t$ is a projection of $X_t$ onto the bundle of forms with stabiliser $Spin(7)$.
    In the proof of \cite[Proposition 4.2.3]{Joyc3} we have the estimates $|\nabla(X_t-\Omega_0)|=O(t^{7/2})$ and $|\nabla(X_t-\Omega_t)|=O(t^{7/2})$, which implies $|\nabla(\Omega_t-\Omega_0)|=O(t^{7/2})$.
    From \cite[Proposition 4.2.2]{Joyc3} we have $|\Omega_t-\Omega_0|=O(t^4)$, which proves the claim.
\end{proof}
We denote the resulting manifold by $M_{t}$, when we emphasize the $t$-dependency.  Otherwise, it is denoted by $M$.

\section{Spin(7)-instantons on Spin(7)-manifolds}

\subsection{Spin(7)-instantons}
\label{sec:spin7instantons}

Let $M$ be a compact $Spin(7)$-manifold. 
Then the bundle $\Lambda^2: = \Lambda^2 T^* M$ on  a $Spin(7)$-manifold orthogonally  splits as:
\begin{equation}
\Lambda^2=\Lambda^2_{21}\oplus\Lambda^2_7,
 \label{equation:2forms-orthogonal-splitting}
\end{equation}
where $\Lambda^2_{21}$ is a vector bundle of rank 21 corresponding to $\mathfrak{spin}(7) \cong \mathfrak{so}(7)\subset\mathfrak{so}(8)$ and $\Lambda^2_7$ is a vector bundle of rank 7 orthogonal to $\Lambda^2_{21}$  
(see e.g. \cite[\S 9]{SaWa}, \cite[\S 3.4]{Reye} for more details). 
We denote by $\pi^2_7$ the projection $\Lambda^2 \to \Lambda^2_7$.

\begin{remark}
There is an alternative way of finding the decomposition \eqref{equation:2forms-orthogonal-splitting}. 
Using the $Spin(7)$-structure, we define the operator 
$ S :\Lambda^2 \longrightarrow \Lambda^2$ by  
$$
\alpha \longmapsto *(\Omega\wedge\alpha).
$$ 
It turns out that $S$ preserves the splitting and the subspaces are eigenspaces of the operator. We find that $\Lambda^2_{21}$ has eigenvalue $-1$; and $\Lambda^2_7$ has eigenvalue $3$. Namely, the orthogonal projections onto $\Lambda^2_7$ and $\Lambda^2_{21}$ are given by
 \begin{align}
 \label{eq:spin7-projections}
 \begin{split}
  \pi^2_{21}: \Lambda^2 & \rightarrow \Lambda^2_{21}
  \\
  \omega & \mapsto \frac{1}{4} (* (\Omega \wedge \omega)-3\omega), \\
  \pi^2_7: \Lambda^2 & \rightarrow \Lambda^2_7
  \\
  \omega & \mapsto \frac{1}{4} (* (\Omega \wedge \omega)+\omega).
  \\
 \end{split}
 \end{align}
 See e.g. again  \cite[\S 9]{SaWa} for more details. 
\end{remark}

\begin{definition}
 A connection $A$ of a principal $G$-bundle $P$ over $M$ is said to be a \emph{$Spin(7)$-instanton} if its curvature $F_A$ satisfies
 \begin{align}
 \label{equation:instanton-equation}
  \pi^2_7(F_A) = 0.
 \end{align}
\end{definition}

\paragraph{Examples of Spin(7)-instantons.} 

Examples of $Spin(7)$-instantons were first constructed in Physics such as by Fairlie--Nuyts \cite{FaNu}, Fubini--Nicolai \cite{FuNi} on $\mathbb{R}^8$.  
Then Kanno--Yasui \cite{KaYa} constructed examples of them on Bryant--Salamon's $Spin(7)$-manifolds with structure group $Spin(7)$.  
Later, Clarke \cite{Clar} (see also Clarke--Oliveira \cite{ClOl}) constructed them on Bryant--Salamon's manifolds with structure group $SU(2)$.  

As mentioned in the Introduction, the first examples of $Spin(7)$-instantons on compact $Spin(7)$-manifolds were given by  
Lewis \cite{Lewi}. 
Subsequently, 
Tanaka \cite{Tana} constructed examples of $Spin(7)$-instantons on Joyce's second examples of compact $Spin(7)$-manifolds. Walpuski \cite{Walp2} worked out a Taubes' construction for $Spin(7)$-instantons, as mentioned before.

\subsection{Local deformation}

As in the case of anti-self-dual instantons in four dimensions, the action of the gauge group $\mathscr{G}(P)$ preserves the decomposition of the curvature $F_A$ of a connection $A$, 
that is, if $A$ is a $Spin(7)$-instanton, then so is $g (A)$ for all $g \in \mathscr{G}(P)$. 
Therefore, we consider\footnote{with an appropriate topology.}  
$$\mathscr{M}(P, \Omega)
  :=
  \{
   A \in \mathscr{A}(P)
   :
   \pi^2_7(F_A)=0
  \}
  /\mathscr{G}(P), 
  $$ and call it the {\it moduli space of $Spin(7)$-instantons on $P$}.

The local description around $A \in \mathscr{M}(P, \Omega)$ is described by means of the linearisation of the $Spin(7)$-instanton equation \eqref{equation:instanton-equation}, namely, the linear operator 
$\d^7_A := \pi^2_7 \circ \d_A$, with customarily the {\it Coulomb gauge condition $\d^*_A a =0$} (cf. \cite[\S 2.3.1]{DoKr}). 
In other words, the following  elliptic complex governs the infinitesimal deformation of a $Spin(7)$-instanton $A$: 
\begin{equation}
0 \longrightarrow 
\Gamma ( \Lambda^0 \otimes \Ad P) 
\xrightarrow{\d_A} \Gamma ( \Lambda^1 \otimes \Ad P) \xrightarrow{ \d_A^7} 
\Gamma (\Lambda^2_7 \otimes \Ad P ) \longrightarrow 0, 
\label{eq:complex}
\end{equation}
or, equivalently, so does the following linear elliptic operator: 
\begin{align} 
\label{equation:linearised-operator}
L_A= \left(\d_A^*, \d^7_A \right): \Gamma ( \Lambda^1 \otimes \Ad P ) 
\longrightarrow 
\Gamma ( \Lambda^0 \otimes \Ad P) \oplus \Gamma (\Lambda^2_7 \otimes \Ad P). 
\end{align}
We denote by $\mathcal{H}^i_A$ the $i$-th cohomology of the complex \eqref{eq:complex} for $i=0,1,2$.  

\begin{remark}
The ellipticity of the $Spin(7)$-instanton equations was first addressed in Bilge \cite{Bilg} and the existence of an elliptic complex was described in Reyes Carri\'on \cite{Reye}. 
From the viewpoint of physics, the (infinitesimal) moduli space of heterotic backgrounds is given by the cohomology of a certain bundle-valued complex over the compactifying manifold. The instanton deformation complex of Reyes Carri{\'o}n \cite{Reye} appears naturally in this description, encoding the infinitesimal moduli of the gauge fields. See \cite{AnGrLuOv, GaRuTi} for the $SU(3)$ case and \cite{ClGaTi, delaOsLaSv} for the $G_2$ case.
\end{remark} 
\kuro

\begin{definition}
\label{definition:rigid-irreducible-unobstructed}
Let $A$ be a $Spin(7)$-instanton on $P$. 
We say $A$ is \emph{irreducible} if $\mathcal{H}^0_A=0$, it is \emph{infinitesimally rigid} if $\mathcal{H}^1_A=0$, and it is \emph{unobstructed} if $\mathcal{H}^2_A=0$.
\end{definition}

When the underlying manifold is a Calabi--Yau four-fold, $Spin(7)$-instantons on a Hermitian vector bundle on it become Hermitian-Yang--Mills connections (see \cite{Lewi}).  
Therefore, the moduli space of $Spin(7)$-instantons can be identified with that of semistable bundles via the Hitchin--Kobayashi correspondence. 
Then one may consider an analogue of the theory of Donaldson invariants by means of the moduli spaces of semistable sheaves on a Calabi--Yau four-fold with the help of virtual techniques. As mentioned in Introduction, this direction of research has been pursued by Borisov--Joyce \cite{BoJo}, Oh--Thomas \cite{OhTh}, \cite{OhTh2}, and other people.

\section{Approximate solutions and the estimate}
\label{sec:6}

The smooth manifold $M_t$ carries the $Spin(7)$-structure with small torsion $\Omega_t$ from Section \ref{subsection:the-manifolds-by-Joyce}.
In \cite{Joyc3} the torsion-free $Spin(7)$-structure $\tilde{\Omega}_t$ on $M_t$ was constructed.
The difference $\tilde{\Omega}_t-\Omega_t$ is small and in Section \ref{subsection:improvement-of-spin7-existence-result} we will sharpen an estimate from the literature and quantify this smallness explicitly.

The $Spin(7)$-structure $\Omega_t$ was obtained by gluing together different local $Spin(7)$-structures.
For each local $Spin(7)$-structure there exist certain obvious $Spin(7)$-instantons.
In Section \ref{subsection:approximate-solutions}, we will glue these obvious $Spin(7)$-instantons together to a connection $A_t$ that is globally defined on all of $M_t$.

Because of the gluing process, $A_t$ is not exactly a $Spin(7)$-instanton---neither with respect to $\Omega_t$ nor with respect to $\tilde{\Omega}_t$.
However, it is close to being a $Spin(7)$-instanton.
In Section \ref{subsection:pregluing-estimate} we estimate the error of $A_t$ of being a $Spin(7)$-instanton with respect to $\tilde{\Omega}_t$, making use of the estimate from Section \ref{subsection:improvement-of-spin7-existence-result}.

In Section \ref{subsection:perturbation-step}, the connection $A_t$ will be perturbed to an exact solution of the $Spin(7)$-instanton equation.

\begin{convention}
 In what follows, we will prove estimates on the one-parameter family of manifolds $(M, \tilde{\Omega}_t)$ and it will be important to understand exactly how these estimates depend on $t$.
 Throughout, we will use $c$ to denote a constant that can be different from line to line, but is always independent of $t$.
\end{convention}

In Section \ref{subsection:the-manifolds-by-Joyce} (cf. Table \ref{table:singular-set-in-t8}) we introduced the notation $S_1, \dots, S_8, S_{73},\dots,S_{76}$ for singularities looking locally like $\R^4 \times (\R^4 /\{\pm 1\})$, and we used $S_9,\dots,S_{72}$ for singularities looking locally like $(\R^4 /\{\pm 1\}) \times (\R^4 /\{\pm 1\})$.
In the following norm, we will use the minimum distance to \emph{any} of these singular sets:

\begin{definition}[Weighted H\"{o}lder norm]
Set $\displaystyle{S:= \bigcup_{j=1}^{76} S_j}$. \label{definition:weighted-hoelder-norm-on-Mt}
 On $M_t$ we define $r_t : M_t \rightarrow [0, \zeta]$ by $r_t (x) := 
  \min \{ \zeta, d_{T^8/\Gamma} (\pi(x), S) \}$ for $x \in M_t$
 and the weight functions by 
 \[
  w_t(x)
  =
  t+ r_t(x)
  \text{ and }
  w_t(x,y)
  =
  \min
  \{
   w_t(x), w_t(y)
  \}.
 \]
 For a H\"{o}lder exponent $\alpha \in (0,1)$ and weight parameter $\beta \in \R$ define
 \begin{align*}
  [f]
  _{C^{0,\alpha}_{\beta,t}(U)}
  &=
  \sup_{d(x,y) \leq w_t(x,y)}
  w_t(x,y)^{\alpha-\beta}
  \frac{|f(x)-f(y)|}{d(x,y)^\alpha},
  \\
  \|{f}_{L^\infty_{\beta,t}(U)}
  &=
  \|{w_t^{-\beta} f}_{L^\infty(U)},
  \text{ and}
  \\
  \|{f}_{C^{k,\alpha}_{\beta,t}(U)}
  &=
  \sum_{j=0}^k
  \|{\nabla^j f}_{L^\infty_{\beta-j}(U)}
  +
  \left[
   \nabla^k f
  \right]_{C^{0,\alpha}_{\beta-j}(U)}.
 \end{align*}
 Here, $f$ is a section of a vector bundle over $U \subset M_t$ equipped with a norm $| \cdot |$ and a connection $\nabla$.
 We use parallel transport to compare values of $f$ at different points.
 If $U$ is not specified, then we take $U = M_t$.
\end{definition}

\subsection{Improved estimate for torsion-free Spin(7)-structures}
\label{subsection:improvement-of-spin7-existence-result}

In \cite{Joyc3} the torsion-free $Spin(7)$-structure $\tilde{\Omega}_t$ on $M_t$ was constructed, and an estimate for $\tilde{\Omega}_t-\Omega_t$ in the $C^0$-norm was given.
For later analysis, we require an estimate in the weighted H\"{o}lder norms from \cref{definition:weighted-hoelder-norm-on-Mt}, and it is the purpose of this section to prove this estimate.

We begin with the following technical lemma which is an adaptation from Proposition 2.25 in \cite{Walp3}. 

\begin{proposition}
\label{proposition:near-euclidean-coordinates}
 For each $\mu > 0$ and $K \in \mathbb{N}_0$ there exists a constant $\epsilon > 0$ such that the following holds for all $t \in (0,T)$ and $p \in M_t$:
 the number $R:= \epsilon(t+r_t(p))$ is less than the injectivity radius of $M_t$ at $p$ and if we identify $T_p M_t$ isometrically with $\R^8$ and denote by $s_R: B_1 \rightarrow B_R(p)$ the map obtained by multiplication with $R$ followed by the exponential map, then
 \begin{align}
  \left|
   \partial^k(R^{-2} s^*_R g_t-g_{\R^8})
  \right|
  \leq
  \mu
 \end{align}
 for all $k \in \{0, \dots, K\}$.
 Here $g_{\R^8}$ denotes the standard metric on $\R^8$.
\end{proposition}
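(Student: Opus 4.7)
The claim amounts to a uniform near-Euclidean statement at the scale $R=\epsilon(t+r_t(p))$ dictated by the distance to the singular set. I would prove it by establishing the two geometric estimates
\begin{equation*}
|\nabla^k \Riem(g_t)|(p) \leq C_k(t+r_t(p))^{-2-k}, \qquad \inj(g_t,p) \geq c(t+r_t(p)),
\end{equation*}
valid for all $k$ up to an order determined by $K$, with constants uniform in $p$ and $t$. These bounds convert into the required $C^K$ statement by the Taylor expansion $\bar g_{ij}(x)-\delta_{ij} = \sum_{j\geq 0} O(|\nabla^j \Riem|(p)\,|x|^{2+j})$ of a Riemannian metric in normal coordinates at $p$. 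After the rescaling one finds $R^{-2}(s_R^* g_t)_{ij}(y) = \bar g_{ij}(Ry)$, and the $m$-th derivative in $y$ becomes a sum of terms of the form $R^{2+j}|\nabla^j \Riem|(p) \leq C\epsilon^{2+j}$, so the total $C^K$ error is $O(\epsilon^2)$. Choosing $\epsilon$ small enough gives both the $\mu$-closeness and the inequality $R \leq \inj(g_t,p)$.

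The substantive work therefore lies in verifying the curvature and injectivity-radius estimates, which I would establish by a case analysis on $r_t(p)$. When $r_t(p) \geq \zeta/2$, the point lies in the image of the smooth part of the orbifold $T^8/\Gamma$ inside $M_t$, and by \cref{proposition:spin-7-difference-to-product-structure} the metric $g_t$ is an $O(t^{7/2})$-perturbation of the flat orbifold metric; both estimates then hold trivially with constants depending only on $\zeta$.

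When $r_t(p) < \zeta/2$, the point sits inside one of the local model neighbourhoods of type (i), (ii), or (iii), each of which is a product involving the Eguchi--Hanson space $X$ with the hyperk\"ahler metric rescaled so that the exceptional divisor has size of order $t$. The ALE asymptotic expansion of $g_X$ yields the unscaled bounds $|\nabla^k\Riem(g_X)|(\rho) \leq C_k(1+\rho)^{-4-k}$ at radial coordinate $\rho$, together with a linearly growing lower bound on the injectivity radius. Scaling distances by $t$ and curvature by $t^{-2}$, and using that a point at $g_t$-distance $r_t(p)$ from the exceptional divisor corresponds to $\rho = r_t(p)/t$ in the unscaled $X$, both desired estimates follow directly. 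The $O(t^{7/2})$ deviation of $\Omega_t$ from the exact product structure $\Omega_j$, again from \cref{proposition:spin-7-difference-to-product-structure}, is subleading and does not affect the bounds.

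The main obstacle is bookkeeping: one must verify that the curvature and injectivity-radius bounds survive the cut-off-based interpolations used by Joyce to define $\Omega_t$ in the transition regions between the local models and the orbifold part, and that the product structure in cases (i)--(iii) does not disturb the scaling argument. This is exactly the content of the analogous Proposition 2.25 of \cite{Walp3} in the $G_2$ setting, which the authors explicitly adapt; the same proof template applies here.
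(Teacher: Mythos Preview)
Your proposal is correct and matches the approach of the cited reference: the paper does not give its own proof but simply states the result as an adaptation of \cite[Proposition~2.25]{Walp3}, whose argument proceeds exactly via the weighted curvature and injectivity-radius bounds you outline. Your sketch is in fact more detailed than what the paper provides, and the case analysis on $r_t(p)$ together with the ALE scaling for the Eguchi--Hanson factor is precisely how the Walpuski argument goes through.
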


This allows to transport a standard elliptic regularity theorem from $\R^8$ to small neighbourhoods of $M_t$ as in Equation (2.29) in \cite{Walp3}.
For a Riemannian $8$-manifold $N$ denote by $\Lambda^4_-(N)$ the bundle of anti-self-dual $4$-forms, denote the smooth anti-self-dual $4$-forms (i.e. smooth sections of $\Lambda^4_-(N)$) by $\Omega^4_-(N)$, and denote $C^{k,\alpha}$-sections of $\Lambda^4_-(N)$ by $C^{k,\alpha}(\Lambda^4_-(N))$.

\begin{lemma}
\label{lemma:holder-seminorm-elliptic-regularity}
 For $\alpha \in (0,1)$ small enough, there exist $\epsilon, c>0$ independent of $t$ such that for all $p \in M_t$ and $R$ as in \cref{proposition:near-euclidean-coordinates} we have
 \begin{align}
  R^\alpha
  [\sigma]_{C^{0,\alpha}(B_{\frac{1}{2}R})}
  \leq
  c
  \left(
   R^{1/5}
   \|{ \d \sigma }_{L^{10}(B_R)}
   +
   \|{ \sigma }_{L^\infty(B_R)}
  \right)
 \end{align}
 for any $\sigma \in \Omega^4_-(B_1)$.
\end{lemma}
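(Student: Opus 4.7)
The strategy is to use Proposition \ref{proposition:near-euclidean-coordinates} to pull the problem back to a nearly Euclidean unit ball, apply a standard $L^{10}$ elliptic / Morrey estimate there, and then scale back. The point of the proposition is precisely to ensure that all constants obtained in the intermediate Euclidean estimate are uniform in $t$.

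First I would choose $\mu$ small and $K$ sufficiently large (say $K=1$ is already enough for a first-order elliptic estimate), apply Proposition \ref{proposition:near-euclidean-coordinates}, and consider $\tilde{\sigma}:=s_R^* \sigma$ on $B_1 \subset \R^8$ equipped with the rescaled metric $\tilde{g}:=R^{-2}s_R^*g_t$. A direct scaling calculation using $x=Ry$ gives
\[
 \|{\tilde{\sigma}}_{L^\infty(B_1)}=\|{\sigma}_{L^\infty(B_R)},
 \quad
 \|{\d\tilde{\sigma}}_{L^{10}(B_1)}=R^{1/5}\|{\d\sigma}_{L^{10}(B_R)},
 \quad
 [\tilde{\sigma}]_{C^{0,\alpha}(B_{1/2})}=R^\alpha[\sigma]_{C^{0,\alpha}(B_{R/2})},
\]
so the claimed estimate is equivalent to the unscaled statement
\[
 [\tilde{\sigma}]_{C^{0,\alpha}(B_{1/2})}
 \leq
 c\bigl(\|{\d\tilde{\sigma}}_{L^{10}(B_1)}+\|{\tilde{\sigma}}_{L^\infty(B_1)}\bigr)
\]
on the unit ball with a metric that is $C^K$-close to the Euclidean one uniformly in $t$.

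Next I would observe that on anti-self-dual $4$-forms one has $\d^*=\pm *\d$ (in dimension $8$, since $**=1$ on $\Lambda^4$ and $*\tilde{\sigma}=-\tilde{\sigma}$), so
\[
 \bigl|\d^*_{\tilde{g}}\tilde{\sigma}\bigr|_{\tilde{g}}
 \leq
 c\,|\d\tilde{\sigma}|_{\tilde{g}}
\]
pointwise, with $c$ independent of $t$ once $\tilde{g}$ is sufficiently close to Euclidean. Hence $\|(\d+\d^*)\tilde{\sigma}\|_{L^{10}(B_1)}\leq c\|\d\tilde{\sigma}\|_{L^{10}(B_1)}$, and the first-order interior elliptic estimate for the elliptic operator $\d+\d^*$ (with coefficients close to constant, cf.\ \cite{Walp3}) yields
\[
 \|{\tilde{\sigma}}_{W^{1,10}(B_{3/4})}
 \leq
 c\bigl(\|(\d+\d^*)\tilde{\sigma}\|_{L^{10}(B_1)}+\|{\tilde{\sigma}}_{L^{10}(B_1)}\bigr)
 \leq
 c\bigl(\|{\d\tilde{\sigma}}_{L^{10}(B_1)}+\|{\tilde{\sigma}}_{L^\infty(B_1)}\bigr).
\]
Finally, Morrey's embedding on the $8$-dimensional ball gives $W^{1,10}(B_{3/4})\hookrightarrow C^{0,1/5}(B_{1/2})$, which pins down $\alpha=1/5$ and produces the desired Hölder seminorm bound on $B_{1/2}$. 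Scaling back to $B_R(p)$ via the relations above produces the stated estimate.

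The main obstacle, and the reason for invoking Proposition \ref{proposition:near-euclidean-coordinates} rather than just quoting Morrey directly, is obtaining constants that are \emph{uniform} in $t$: as $t\to 0$ the injectivity radius collapses and curvatures blow up near the exceptional set, so one cannot apply an estimate on $M_t$ at a fixed scale. The rescaling $s_R$ converts this into a one-parameter family of metrics on the fixed ball $B_1$ that, by Proposition \ref{proposition:near-euclidean-coordinates}, are $C^K$-close to the Euclidean metric uniformly in $t$ and $p$; once this is in hand the classical $L^{10}$ elliptic regularity and Morrey embedding have constants that can be chosen independently of $t$ by a standard perturbation argument. Everything else is bookkeeping of the scaling exponents, which were verified above.
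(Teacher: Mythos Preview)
Your proof is correct and follows essentially the same route as the paper: pull back to the unit ball via $s_R$, use that the rescaled metric is uniformly close to Euclidean by Proposition~\ref{proposition:near-euclidean-coordinates}, apply interior elliptic regularity combined with the Sobolev embedding $W^{1,10}\hookrightarrow C^{0,\alpha}$ in dimension $8$, and scale back. The paper states this in one line (``elliptic regularity for the operator $\d$ on $\Omega^4_-(\R^8)$''), whereas you spell out the mechanism via $\d^*\sigma=*\d\sigma$ on anti-self-dual $4$-forms and the ellipticity of $\d+\d^*$; this is exactly the content behind that line. One small correction: the Morrey embedding gives $W^{1,10}\hookrightarrow C^{0,\alpha}$ for every $\alpha\le 1/5$, so it does not pin down $\alpha=1/5$ but rather imposes the constraint $\alpha\in(0,1/5]$, consistent with the general $\alpha$ in the lemma's statement.
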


\begin{proof}
 The operator $\d+\d^*$ is an elliptic operator acting on $\Omega^4_-(\R^8)$.
 Because $|\d^* \sigma|=|\d \sigma|$ for $\sigma \in \Omega^4_-(\R^8)$, we get from standard elliptic theory together with the Sobolev embedding $L^{10}_1 \hookrightarrow C^{0,\alpha}$ in dimension $8$ for small $\alpha$ the following estimate:
 \begin{align}
  [\sigma]_{C^{0,\alpha}(B_{1/2})}
  \leq
  c
  \left(
   \|{ \d \sigma }_{L^{10}(B_1)}
   +
   \|{ \sigma }_{L^\infty(B_1)}
  \right).
 \end{align}
 Applying the rescaling map $s_R: B_1 \rightarrow B_R(p)$ and using \cref{proposition:near-euclidean-coordinates} gives the claim.
\end{proof}

We are now ready to prove the main result of this section, namely the estimate for the difference $\tilde{\Omega}_t-\Omega_t$ in the weighted H\"{o}lder norm:

\begin{theorem}
\label{proposition:spin-7-structure-holder-estimate}
 There exists $c>0$ independent of $t$ such that for small $t$ there exists a torsion-free $Spin(7)$-structure $\tilde{\Omega}_t$ satisfying
 \begin{align}
 \label{equation:torsion-free-spin-7-structure-improved-estimate}
  \|{\tilde{\Omega}_t-\Omega_t}_{C^{0,\alpha}_{0;t}}
  \leq
  ct^{3/10}.
 \end{align}
\end{theorem}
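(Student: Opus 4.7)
The weighted H\"older norm $\|{\cdot}_{C^{0,\alpha}_{0;t}}$ decomposes as the sum of a pointwise $L^\infty$ piece and a weighted H\"older seminorm. For the $L^\infty$ piece, since $\beta=0$ it coincides with the ordinary $L^\infty$ norm, and \cref{th:Joyce_tfsp7} already gives $\|{\tilde{\Omega}_t-\Omega_t}_{L^\infty} \leq K t^{1/3} \leq c t^{3/10}$ for $t$ small. The substantive work is thus to control the weighted H\"older seminorm.

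The plan for the seminorm is to apply \cref{lemma:holder-seminorm-elliptic-regularity} on scaled balls. Given a pair $(x,y)$ with $d(x,y) \leq w_t(x,y)$, let $p$ be whichever of $x,y$ has the smaller value of $w_t$, so that $w_t(x,y) = w_t(p)$, and set $R = \epsilon w_t(p)$. Since $\tilde{\Omega}_t$ is torsion-free and hence closed, and $\d \Omega_t + \d \phi_t = 0$, one has $\d(\tilde{\Omega}_t - \Omega_t) = \d \phi_t$, whose $L^{10}$ norm is bounded globally by $\lambda t^{7/5}$ thanks to \cref{th:Joyce_tfsp7}. Combined with the $L^\infty$ bound and the fact that $R \leq \epsilon(t+\zeta)$ is uniformly bounded in $t$, \cref{lemma:holder-seminorm-elliptic-regularity} yields
\[
R^\alpha [\tilde{\Omega}_t-\Omega_t]_{C^{0,\alpha}(B_{R/2}(p))} \leq c\bigl(R^{1/5}t^{7/5} + t^{1/3}\bigr) \leq c t^{1/3}.
\]
When $d(x,y) \leq R/2$, both $x$ and $y$ lie in $B_{R/2}(p)$ and $w_t(x,y)^\alpha \sim R^\alpha$, so the above immediately controls the weighted seminorm contribution at $(x,y)$ by $c t^{1/3}$. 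In the complementary range $R/2 < d(x,y) \leq w_t(x,y)$ one has $d(x,y) \sim w_t(x,y)$, and the trivial bound $|(\tilde{\Omega}_t-\Omega_t)(x)-(\tilde{\Omega}_t-\Omega_t)(y)| \leq 2\|{\tilde{\Omega}_t-\Omega_t}_{L^\infty}$ yields the same order of estimate. Since $t^{1/3} \leq t^{3/10}$ for small $t$, this closes the argument.

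The main technical obstacle is that \cref{lemma:holder-seminorm-elliptic-regularity} is stated only for anti-self-dual 4-forms, whereas $\tilde{\Omega}_t-\Omega_t$ is not naturally anti-self-dual: $\Omega_t$ and $\tilde{\Omega}_t$ are self-dual with respect to the distinct metrics $g_t$ and $\tilde{g}_t$, so their difference sits in a mixed subspace of $\Omega^4$. To resolve this, I would unpack the explicit structure of Joyce's construction in \cite{Joyc3}: the torsion-free $Spin(7)$-structure is built as $\tilde{\Omega}_t = \Omega_t + \eta_t + \Theta(\eta_t)$, where $\eta_t \in \Omega^4_-$ solves an elliptic PDE and $\Theta$ is a smooth nonlinear map vanishing to second order at the origin. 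The argument above then applies directly to $\eta_t$, while the quadratic correction $\Theta(\eta_t)$ is controlled in weighted H\"older norms using the product rule together with the already-obtained $L^\infty$ and (inductively) $C^{0,\alpha}_{0;t}$ estimates on $\eta_t$.
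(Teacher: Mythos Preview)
Your proposal ultimately lands on the same strategy as the paper: decompose $\tilde{\Omega}_t - \Omega_t = \eta_t - \Theta(\eta_t)$ with $\eta_t \in \Omega^4_-$, apply \cref{lemma:holder-seminorm-elliptic-regularity} to $\eta_t$, and then control the quadratic term $\Theta(\eta_t)$ using that $\Theta$ vanishes to second order. So the architecture is right.

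There is, however, a real gap in your final paragraph. You write that ``the argument above then applies directly to $\eta_t$'', but the argument above used $\|\d\phi_t\|_{L^{10}} \leq \lambda t^{7/5}$ and $\|\tilde{\Omega}_t-\Omega_t\|_{L^\infty} \leq Kt^{1/3}$. Neither of these transfers to $\eta_t$: one has $\d\eta_t = \d\phi_t + \d F(\eta_t)$ for a nonlinear $F$, so $\|\d\eta_t\|_{L^{10}}$ is not bounded by $\|\d\phi_t\|_{L^{10}}$; and the $L^\infty$ bound on $\tilde{\Omega}_t-\Omega_t$ does not immediately give one on $\eta_t$ without first controlling $\Theta(\eta_t)$, which is circular. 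What the paper does instead is go back into Joyce's proof of \cref{th:Joyce_tfsp7} and extract the intermediate estimates on $\eta_t$ itself, namely $\|\eta_t\|_{L^\infty} \leq ct^{1/2}$ and $\|\nabla\eta_t\|_{L^{10}} \leq ct^{3/10}$. These are what feed into \cref{lemma:holder-seminorm-elliptic-regularity} and produce the exponent $3/10$. Once $\|\eta_t\|_{C^{0,\alpha}_{0;t}} \leq ct^{3/10}$ is established, the bound on $\Theta(\eta_t)$ follows from its quadratic vanishing, exactly as you indicate. So the missing ingredient is not conceptual but bibliographic: you must cite the bounds on $\eta_t$ obtained inside Joyce's existence proof, not try to reconstruct them from the black-box statement of \cref{th:Joyce_tfsp7}.

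(Incidentally, the sign in Joyce's formula is $\tilde{\Omega}_t = \Omega_t + \eta_t - \Theta(\eta_t)$, not $+\Theta(\eta_t)$; this does not affect the estimates.)
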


The two theorems \cite[Theorem A and Theorem 4.2.4]{Joyc3} give \cref{proposition:spin-7-structure-holder-estimate} with $\|{\tilde{\Omega}_t-\Omega_t}_{C^0} \leq ct^{1/2}$, and we now claim a $C^{0,\alpha}_{0;t}$-estimate.
We will give a proof of this improved theorem in the following.

\begin{proof}[Proof of \cref{proposition:spin-7-structure-holder-estimate}]
 In the proof of \cite[Theorem A]{Joyc3}, an element $\eta_t \in \Omega^4_-(M_t)$ solving the equation $\d \eta_t=\d \phi_t +\d F(\eta_t)$ and obeying the estimates
 \begin{align}
 \label{equation:torsion-free-spin-7-construction-original-estimates}
 \|{ \eta_t }_{L^\infty} \leq ct^{1/2}
 \text{ and }
 \|{ \nabla \eta_t }_{L^{10}} \leq ct^{3/10}
 \end{align} 
 was constructed.
 From this, the torsion-free $Spin(7)$-structure 
 \begin{align}
 \label{equation:omega-tilde-eta-equation}
 \tilde{\Omega}_t:=\Omega_t+\eta_t-\Theta(\eta_t)
 \end{align}
 was defined, where $\Theta$ is the smooth function from \cite[Lemma 5.1.1]{Joyc3}.

 We first prove
 \begin{align}
     \label{equation:eta-c0alpha-estimate}
     \|{\eta}_{C^{0,\alpha}_{0;t}}
     \leq
     c t^{3/10}.
 \end{align}
 The estimate $\|{\eta}_{C^0} \leq c t^{1/2}$ is contained in \eqref{equation:torsion-free-spin-7-construction-original-estimates},
 thus, it remains to show $[\eta]_{C^{0,\alpha}_{0;t}} \leq ct^{3/10}$.
 Let $\epsilon>0$ be the constant from \cref{lemma:holder-seminorm-elliptic-regularity}.
 For $x,y \in M_t$ with $d(x,y) \leq \epsilon w_t(x,y)$ we have
 \begin{align}
 \label{equation:eta-holder-estimate}
 \begin{split}
     w_t^\alpha(x,y)
     \frac{|\eta(x)-\eta(y)|}{d(x,y)^\alpha}
     &
     \leq
     cR^\alpha 
     \frac{|\eta(x)-\eta(y)|}{d(x,y)^\alpha}
     \\
     &
     \leq
     c
     \left(
     R^{1/5}
     \|{ \d \eta }_{L^{10}(B_R)}
     +
     \|{ \eta }_{L^\infty(B_R)}
     \right)
     \\
     &
     \leq
     c
     \left( t^{3/10}+t^{1/2} \right),
 \end{split}
 \end{align}
 where we used $R=\epsilon w_t(x,y)$ in the first step,
 we used \cref{lemma:holder-seminorm-elliptic-regularity} in the second step (for this we used the assumption $d(x,y) \leq \epsilon w_t(x,y)$),
 and we used $R \leq c$ together with the bounds in \eqref{equation:torsion-free-spin-7-construction-original-estimates} in the last step.
 
 The estimate \eqref{equation:eta-holder-estimate} for $x,y \in M_t$ with $d(x,y) \leq w_t(x,y)$ rather than $d(x,y) \leq \epsilon w_t(x,y)$ is proved as follows.
 We claim that there is $L>0$ depending on $\epsilon$ but not on $t$ such that for any $x,y \in M_t$ with $d(x,y) \leq w_t(x,y)$ we have points $x_0=x, x_1, \dots, x_{L-1}, x_L=y \in M_t$ such that $d(x_{i-1},x_i)\leq \epsilon w_t(x_{i-1},x_i)$ for all $i \in \{1, \dots, L\}$.
 This sequence is constructed by taking $L$ equally spaced points that are no closer to $\pi^{-1}(S_1 \cup \dots \cup S_{76})$ that $x$ or $y$.
 It satisfies $w_{t_i}(x_i) \geq w_t(x,y)$ and therefore
  \begin{align}
 \label{equation:eta-holder-estimate-telescoped}
     w_t^\alpha(x,y)
     \frac{|\eta(x)-\eta(y)|}{d(x,y)^\alpha}
     &
     \leq
     \sum_{i=1}^L
     w_t^\alpha(x_{i-1},x_i)
     \frac{|\eta(x_{i-1})-\eta(x_i)|}{d(x_{i-1},x_i)^\alpha}.
 \end{align}
 For large enough $L$, independent of $t$, we have that $d(x_{i-1},x_i) \leq \epsilon d(x,y) \leq \epsilon w_t(x,y)$, so we can plug \cref{equation:eta-holder-estimate} into \cref{equation:eta-holder-estimate-telescoped} and obtain
 \[
     w_t^\alpha(x,y)
     \frac{|\eta(x)-\eta(y)|}{d(x,y)^\alpha}
     \leq
     c (t^{3/10}+t^{1/2}), 
 \]
 this time for $d(x,y) \leq w_t(x,y)$.

 follows because one can choose at most $1/\epsilon$ points, which is a fixed number of points, between $x$ and $y$ with pairwise distance $\epsilon w_t(x,y)$ or less and apply \eqref{equation:eta-holder-estimate}.
 This shows the bound \eqref{equation:eta-c0alpha-estimate}.

 Next, we check that this implies the estimate \eqref{equation:torsion-free-spin-7-structure-improved-estimate}.
 We have
 \begin{align}
    \label{equation:omega-tilde-minus-omega-triangle-inequality-result}
     \begin{split}
         \|{\tilde{\Omega}_t-\Omega_t}_{C^{0,\alpha}_{0;t}}
         &\leq
         \|{\eta_t}_{C^{0,\alpha}_{0;t}} + \|{\Theta(\eta_t)}_{C^{0,\alpha}_{0;t}}
     \end{split}
 \end{align}
 by \eqref{equation:omega-tilde-eta-equation}.
 We estimate the last summand as follows:
 \begin{align}
    \label{equation:theta(eta)-estimate}
     \begin{split}
         \|{\Theta(\eta_t)}_{C^{0,\alpha}_{0;t}}
         &\leq
         \|{\Theta(\eta_t)}_{C^0}
         +
         \sup _{d(x,y) \leq w_t(x,y)} w_t(x,y)^\alpha
         \frac{|\Theta(\eta_t)(x)-\Theta(\eta_t)(y)|}{d(x,y)^\alpha}
         \\
         &\leq
         c (t^{3/10})^2
         \\
         &\quad \quad
         +
         c
         \underbrace{
         \sup _{d(x,y) \leq w_t(x,y)} w_t(x,y)^\alpha
         \frac{|\eta_t(x)-\eta_t(y)|}{d(x,y)^\alpha}
         }_{=[\eta]_{C^{0,\alpha}_{0;t}}}
         \cdot
         (|\eta(x)|+|\eta(y)|)
         \\
         &\leq
         c t^{3/5},
     \end{split}
 \end{align}
 where the first step is the definition of our weighted H\"{o}lder norm,
 the second step is from \eqref{equation:torsion-free-spin-7-construction-original-estimates} together with \cite[Lemma 5.1.1 (iii)]{Joyc3},
 and the last step is \eqref{equation:eta-c0alpha-estimate}.

 Now, combining \eqref{equation:eta-c0alpha-estimate}, \eqref{equation:omega-tilde-minus-omega-triangle-inequality-result}, and \eqref{equation:theta(eta)-estimate} gives the claim.
\end{proof}

\subsection{Approximate solutions}
\label{subsection:approximate-solutions}

In the following, let $G$ be a compact Lie group.
We construct a bundle $E_t$ on the compact $Spin(7)$-manifold $M_t$ and a family of approximate $Spin(7)$-instantons $A_t$ on it. 

\begin{definition}
\label{definition:compatible-gluing-data}
 We consider the following data, which we call \emph{compatible gluing data}, consisting of: 
 \begin{enumerate}[label=(\roman*)]
  \item
  an orbifold $G$-bundle $(E_0, \theta) \rightarrow T^8/\Gamma$ with flat connection $\theta$ that is infinitesimally rigid as a $Spin(7)$-instanton (see \cref{definition:rigid-irreducible-unobstructed});
  
  \item
  one point
  $x_k \in C_k \setminus S \, \, (k= 1, \dots 5)$ together with $G$-equivariant maps $\phi_k : (E_0)_{x_k} \rightarrow G$ for each connected component $C_1,C_2,C_3,C_4,C_5$ of $T:= \bigcup_{j \in \{1, \dots, 76\}} T_j$ (see \cref{equation:definition-C1-C2-C3-C4-C5}); and 
  
  \item 
  a bundle $E_j$ over Eguchi--Hanson space $X$ with a finite energy infinitesimally rigid ASD instanton $A_j$ on it that is asymptotic to the flat connection at infinity defined by the representation $\hol_\theta: \pi_1(T_j \setminus S_j) \rightarrow G$ for $j \in \{ 73, \dots, 76\}$
 \end{enumerate}
 with the property that
 \begin{enumerate}[label=(\roman*)]  
  \item[(iv)]
  The monodromy representation $\hol_{\theta}:\pi_1(T_j \setminus S_j) \rightarrow G$ is trivial for $j \in \{1, \dots, 72 \}$. 
 \end{enumerate}
\end{definition}

\vspace{0.1cm}
Given compatible gluing data, we construct a bundle $E_t$ over the compact $Spin(7)$-manifold $M_{t}$ and a connection $A_t$ on $E_t$ as follows:
for $j \in \{1,\dots,76\}$ define the $G$-bundle $\tilde{E}_j \rightarrow \tilde{T}_{j,t}^{\text{reg}}$ as 
\begin{align*}
    \tilde{E}_j :=
    \begin{cases}
        (G \times (\R^4 \times X))/(\mathbb{Z}^4 \rtimes \mathbb{Z}_2) & \text{ for } j \in \{1,\dots,8\}
        \\
        G \times (X \times X) & \text{ for } j \in \{9,\dots,72\}
        \\
        p_2^*E_j/\mathbb{Z}^4 & \text{ for } j \in \{73,\dots,76\}.
    \end{cases}
\end{align*}
Here, $p_2: \R^4 \times X \rightarrow X$ denotes projection onto the second coordinate.
The action of $\mathbb{Z}_2$ on $\R^4 \times X$ corresponds to the action of $\alpha$ near $\fix(\beta)$ and vice versa.
To be precise:
for $a \in \mathbb{Z}^4$, $b \in \mathbb{Z}_2$, and $(v,x) \in \R^4 \times X$ we have $a \cdot (v,x)=(v+a,x)$ and $b \cdot (v,x)=(-v,x)$.
The action $\mathbb{Z}^4 \curvearrowright \R^4 \times X$ is translation in the $\R^4$ coordinate.
The lifts of the two actions are induced by $\theta$.
Then set
\begin{align}
\label{equation:def-of-E-t}
 E_t :=
 \left(
 E_0|_{(T^8/\Gamma) \setminus S}
 \cup
 \bigcup_{j \in \{1, \dots, 76 \}}
 \tilde{E}_j
 \right)/\sim,
\end{align}
where $\sim$ denotes the following equivalence relation:
recall that
\[
 M_t =
 \left(
 (T^8/\Gamma) \setminus S
 \bigcup_{j \in \{1, \dots, 76 \}}
 \tilde{T}_{j,t}^{\text{reg}}
 \right)/\approx,
\]
where $\approx$ identifies points in the resolution of the overlapping areas.
We have bundles $E_0$ over $(T^8/\Gamma) \setminus S$ and $\tilde{E}_j$ over $\tilde{T}_{j,t}^{\text{reg}}$ and can lift the equivalence relation $\approx$ to an equivalence relation $\sim$ on the bundles by the compatibility conditions from Definition \ref{definition:compatible-gluing-data}.

For $j \in \{73, \dots, 76\}$, under the identification of $E_0$ and $p_2^*E_j$ on $T_j$ we can write $p_2^*A_j=\theta+a_j$.
Fix a smooth cut-off function $\overline{\chi}:[0,\zeta] \rightarrow [0,1]$ such that $\overline{\chi}|_{[0,\zeta/4]} \equiv 1$ and $\overline{\chi}|_{[\zeta/2, \zeta]} \equiv 0$.
Let $\chi=\overline{\chi} \circ d_{T^8/\Gamma}(\pi(\cdot), \fix \Gamma)$ and define
the \emph{gluing region} by 
\begin{align}
\label{equation:gluing-region}
 O := \left\{ x \in M_t:
 \frac{\zeta}{4} \leq d_{T^8/\Gamma}(\pi(x) , \fix \Gamma) \leq \frac{\zeta}{2}
 \right\}.
\end{align}
We then interpolate between $p_2^*A_j$ and $\theta$ on $O$ as follows:
\begin{align}
\label{equation:approximate-spin7-instanton}
 A_t(x)
 :=
 \begin{cases}
  p_2^*A_j(x)
  & \text{ if }
  d_{T^8/\Gamma}(\pi(x) , \fix \Gamma) \leq 
  \frac{\zeta}{4}, 
  \\
  \theta(x)+\chi(x)a_j(x)
  & \text{ if }
  x \in O, \text{ and }
  \\
  \theta(x), 
  & \text{ otherwise.}
 \end{cases}
\end{align}

\begin{remark}
\label{remark:pontrjagin-classes}
Denote by $[S_j]$ a homology class in $H_4 (M_t, \Z)$, which is defined as $(i_{j,t})_{*} (T^4 \times \{ x \} )$, where $i_{j,t} : \tilde{T}_{j,t} \to M_t$ is the inclusion, and $x \in \pi^{-1}_{j, t} (B^4_{\zeta} / \{ \pm 1 \} )$.  
Write the adjoint bundle of $E_t$ by $\Ad E_t$. 
Then, from the above construction, one sees the first Pontryagin class of 
$\Ad  E_t$ is given by $\displaystyle{p_1 (\Ad E_t) = - \sum_{j=73}^{76}  k_j \text{P.D.}  [S_j]}$, where 
 $k_j$ is $\displaystyle{\frac{1}{8 \pi^2} \int_X | F_{A_j} |^2 d\text{vol}}$, $\text{P.D.}$ stands for Poincar\'e dual, and we used $p_2 (\Ad E_t) = 0$. 
We omit the proof of these claims here, since they are almost the same as ones given by Walpuski for the $G_2$-instanton case (see \cite[Proposition 2.60]{Walp3}). 
\end{remark}

\subsection{Pregluing estimate}
\label{subsection:pregluing-estimate}

We prove an estimate on the approximate solution $A_t$. 

\begin{proposition}
\label{proposition:pregluing-estimate}
The approximate solution $A_t$ satisfies:
 \begin{align}
  \|{
   \left( \pi^2_7 \right)^{\tilde{\Omega}^t}
   (F_{A_t})
  }_{C^{0,\alpha}_{-2; t}}
  \leq
  ct^{3/10},
 \end{align}
 where $\left( \pi^2_7 \right)^{\tilde{\Omega}^t}: \Omega^2(M_t) \rightarrow \Omega^2_7(M_t)$ is defined using the torsion-free $Spin(7)$-structure $\tilde{\Omega}^t$.
\end{proposition}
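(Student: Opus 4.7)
The plan is to localize the estimate using the piecewise definition of $A_t$ in \eqref{equation:approximate-spin7-instanton}. Outside $\bigcup_{j=73}^{76}\pi^{-1}(T_j)$---and, by the triviality of the monodromy in \cref{definition:compatible-gluing-data}(iv), inside $\pi^{-1}(T_j)$ for $j\in\{1,\dots,72\}$ as well---the connection $A_t=\theta$ is flat, so $F_{A_t}=0$ and nothing needs to be estimated. For $j\in\{73,\dots,76\}$ there are two regions to handle: the \emph{deep region} $D_j:=\pi^{-1}(T_j)\cap\{d_{T^8/\Gamma}(\pi(\cdot),\fix\Gamma)\leq\zeta/4\}$, where $A_t=p_2^*A_j$; and the gluing region $O$ from \eqref{equation:gluing-region}, where $A_t=\theta+\chi\,a_j$.

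On $D_j$ the connection $p_2^*A_j$ is the pullback of an ASD instanton on the hyperk\"ahler manifold $X$, hence a $Spin(7)$-instanton with respect to the product $Spin(7)$-structure $\Omega_j$; in particular $(\pi^2_7)^{\Omega_j}(F_{p_2^*A_j})=0$. Therefore
\[
 (\pi^2_7)^{\tilde{\Omega}_t}(F_{A_t})
 =
 \bigl((\pi^2_7)^{\tilde{\Omega}_t}-(\pi^2_7)^{\Omega_j}\bigr)(F_{p_2^*A_j}),
\]
and the operator norm of the bracketed difference is controlled, via the formula \eqref{eq:spin7-projections}, by $\|{\tilde{\Omega}_t-\Omega_j}_{C^{0,\alpha}_{0;t}}$. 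Splitting by the triangle inequality and applying \cref{proposition:spin-7-difference-to-product-structure} (giving $\|{\Omega_t-\Omega_j}_{C^{0,\alpha}}\leq ct^{7/2}$) together with \cref{proposition:spin-7-structure-holder-estimate} (giving $\|{\tilde{\Omega}_t-\Omega_t}_{C^{0,\alpha}_{0;t}}\leq ct^{3/10}$) bounds this operator norm by $ct^{3/10}$. It remains to check that $\|{F_{p_2^*A_j}}_{C^{0,\alpha}_{-2;t}(D_j)}$ is bounded independently of $t$. Using the standard decay $|\nabla^k F_{A_j}|_{g_X}\lesssim (1+|y|)^{-4-k}$ on $X$ together with the scalings $|\cdot|_{t^2 g_X}=t^{-2}|\cdot|_{g_X}$ and $w_t\sim t(1+|y|)$ (where $|y|$ is the Eguchi--Hanson radius), one finds $w_t^2\,|F_{p_2^*A_j}|_{g_{M_t}}\lesssim (1+|y|)^{-2}$, which is uniformly bounded; the matching H\"older seminorm comes from the same scaling argument, passing to the unit ball via \cref{proposition:near-euclidean-coordinates}.

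On the gluing region $O$, since $\theta$ is flat,
\[
 F_{A_t}=d\chi\wedge a_j+\chi\,F_{p_2^*A_j}+(\chi^2-\chi)\,a_j\wedge a_j.
\]
Here the Eguchi--Hanson radius satisfies $|y|\sim\zeta/t$, which is large, and in a gauge at infinity (coming from the asymptotic flat connection $\hol_\theta$) one has $|a_j|_{g_X}\lesssim |y|^{-3}$ and $|F_{A_j}|_{g_X}\lesssim |y|^{-4}$. Converting to $g_{M_t}$ via the same scaling, and using that $w_t\sim\zeta$ in $O$, each of the three summands contributes at most $O(t^2)$ to the $C^{0,\alpha}_{-2;t}$-norm, which is negligible compared with $t^{3/10}$. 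Combining the two regions, the deep-region term dominates and yields the stated bound. I expect the main obstacle to be the bookkeeping of metric rescalings between $(X,g_X)$ and the shrunken Eguchi--Hanson factor inside $(M_t,\tilde{g}_t)$, together with verifying that the asymptotic decay of $a_j$ is realised in a gauge compatible with the interpolation in \eqref{equation:approximate-spin7-instanton}; all remaining computations are routine once these are set up.
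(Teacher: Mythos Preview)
Your proposal is correct and follows essentially the same approach as the paper's proof. The only difference is organizational: you split by region first (deep region $D_j$ versus gluing region $O$), whereas the paper splits by projection first---writing $(\pi^2_7)^{\tilde{\Omega}_t}(F_{A_t})=\bigl[(\pi^2_7)^{\tilde{\Omega}_t}-(\pi^2_7)^{\Omega_j}\bigr](F_{A_t})+(\pi^2_7)^{\Omega_j}(F_{A_t})$ on all of $\pi^{-1}(T_j)$, bounding the first summand by $ct^{3/10}$ via a uniform bound $\|{F_{A_t}}_{C^{0,\alpha}_{-2;t}}\leq c$ together with \cref{proposition:spin-7-structure-holder-estimate,proposition:spin-7-difference-to-product-structure}, and noting the second summand is supported in $O$ where it is $O(t^2)$; the two decompositions are equivalent and the estimates invoked are identical.
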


\begin{proof}
 First note that $A_t$ is flat everywhere except on $\pi_{j,t}^{-1}(T_j)$ for $j \in \{73, \dots, 76\}$, so it suffices to estimate $\pi^2_7(F_{A_t})$ there.
 Denote by $\left( \pi^2_7 \right)^{\Omega_j}: \Omega^2(M_t) \rightarrow \Omega^2_7(M_t)$ the projection defined by the $Spin(7)$-structure $\Omega_j$ on $\pi_{j,t}^{-1}(T_j)$.
 Then we have: 
 \begin{align}
 \label{equation:pregluing-pi-2-7-omega-tilde-line}
 \begin{split}
  \|{
   \left( \pi^2_7 \right)^{\tilde{\Omega}^t}
   (F_{A_t})
  }_{C^{0,\alpha}_{-2; t}}
  &\leq
  \|{
   \left( 
   \left( \pi^2_7 \right)^{\tilde{\Omega}^t}
   -
   \left( \pi^2_7 \right)^{\Omega_j}
   \right)
   (F_{A_t})
  }_{C^{0,\alpha}_{-2; t}}
  \\
  &\quad
  +
  \|{
   \left( \pi^2_7 \right)^{\Omega_j}
   (F_{A_t})
  }_{C^{0,\alpha}_{-2; t}}.
 \end{split}
 \end{align}
 We now estimate the first summand.
 As before, $\rho:X \rightarrow \C^2/\{\pm 1\}$ denotes the blowup map of the Eguchi--Hanson space.
 Observe that
 \begin{align*}
  | (t+r_t)^{2+k} \nabla^k F_{A_t} |_{g_t}
  &\leq
  t^{-2} | (t+r_t)^{4+k} \nabla^k F_{A_t} |_{g_t}
  \\
  &\leq
  c
  t^{2+k} | (1+|\rho|)^{4+k} \nabla^k F_{A_j} |_{g_t} 
  \\
  &=
  c
  | (1+|\rho|)^{4+k} \nabla^k F_{A_j} |_{g_X}
  \\
  &\leq
  c,
 \end{align*}% fixed 2024-05-14
 where we used $r_t=|\rho \circ p_2|$ on $\pi_{j,t}^{-1}(T_j)$, where $p_2: T^4 \times X \rightarrow X$ denotes projection onto the second component, in the second step;
 we used $g_t|_{\{p\} \times X}=t^2 g_X$ for any $p \in T^4$ on $\pi^{-1}(T_j)$ in the third step;
 and we used \cref{proposition:framed-asd-decay} in the last step.
 Thus, 
 \begin{align}
 \label{equation:approx-instanton-curvature-estimate}
  \|{
   F_{A_t}
  }_{C^{0,\alpha}_{-2; t}}
  &
  \leq
  c.
 \end{align}
 We can turn to the right-hand side of \eqref{equation:pregluing-pi-2-7-omega-tilde-line}.
 Because $\tilde{\Omega}^t$ and $\Omega_j$ are close one expects that projections with respect to them are close, too.
 That is indeed the case, and we obtain:
 \begin{align}
 \label{equation:pregluing-pi-2-7-omega-difference-line}
 \begin{split}
  \|{
   \left( 
   \left( \pi^2_7 \right)^{\tilde{\Omega}^t}
   -
   \left( \pi^2_7 \right)^{\Omega_j}
   \right)
   (F_{A_t})
  }_{C^{0,\alpha}_{-2; t}}
  &\leq
  c
  \|{ 
%   \left( \pi^2_7 \right)^{\tilde{\Omega}^t}
%   -
%   \left( \pi^2_7 \right)^{\Omega_j}
   \tilde{\Omega}^t
   -
   \Omega_j
  }_{C^{0,\alpha}_{0; t}}
  %\\
  %&\quad
  \cdot
  \|{
   F_{A_t}
  }_{C^{0,\alpha}_{-2; t}}
  \\
  &\leq
  ct^{3/10}.
 \end{split}
 \end{align}
 Here we used \eqref{eq:spin7-projections} in the first step, and \cref{proposition:spin-7-structure-holder-estimate,proposition:spin-7-difference-to-product-structure} together with the bound \eqref{equation:approx-instanton-curvature-estimate} in the second step.
 For the second summand, recall that on $T_j$ for $j \in \{ 73, \dots, 76\}$ we have $A_t=\theta+\chi a_j$, therefore
 \begin{align}
 \notag
  F_{p_2^*A_j}
  &=
   \underbrace{F_\theta}_{=0}+\d_\theta a_j + \frac{1}{2} [a_j,a_j]
  ,
  \text{ which implies}
  \\
 \label{equation:pregluing-curvature-computation}
 \begin{split}
  F_{A_t}
  &=
  \underbrace{F_\theta}_{=0}
  +\d _{\theta}( \chi a_j)
  +\frac{1}{2} \chi^2 [a_j,a_j]
  \\
  &=
  (\d \chi)\wedge a_j + \chi F_{p_2^*A_j}
  +
  \frac{\chi^2-\chi}{2}[a_j,a_j]
 \end{split}
 \end{align}
 The connection $A_j$ is an ASD instanton, so $p_2^*A_j$ is a $Spin(7)$-instanton with respect to $\Omega_j$.
 Furthermore, $A_j$ is asymptotic to some flat connection $A_j^\infty$ with rate $-3$ by \cref{proposition:framed-asd-decay}, and therefore
 \begin{align}
 \label{equation:pregluing-difference-decay}
  \nabla^k a_j
  =
  t^{2+k} \mathcal{O}(r_t^{-3-k}).
 \end{align}
 Combining \cref{equation:pregluing-curvature-computation,equation:pregluing-difference-decay}, we find that
 \begin{align}
 \label{equation:pregluing-pi-2-7-omega-line}
 \begin{split}
  \|{
   \left( \pi^2_7 \right)^{\Omega_j}
   (F_{A_t})
  }_{C^{0,\alpha}_{-2; t}}
  &=
  \|{
   \left( \pi^2_7 \right)^{\Omega_j}
   (F_{A_t})
  }_{C^{0,\alpha}_{-2; t}(O)}
  \\
  &\leq
  c
  \|{
   \left( \pi^2_7 \right)^{\Omega_j}
   (F_{A_t})
  }_{C^{0,\alpha}(O)}
  \\  
  &\leq
  c t^2.
 \end{split}
 \end{align}
 For the first equality we used that the factors $\d \chi$ and $\frac{\chi^2-\chi}{2}$ from \cref{equation:pregluing-curvature-computation} are supported in $O$.
 In the second step, we used that, in $O$, the weight function $(t+r_t)$ is uniformly bounded by a constant.
 Estimating the terms in the right-hand side of \eqref{equation:pregluing-pi-2-7-omega-tilde-line} by the bounds \eqref{equation:pregluing-pi-2-7-omega-difference-line} and \eqref{equation:pregluing-pi-2-7-omega-line}, we obtain the claim.
\end{proof}

\section{Linear problems on local models}
\label{sec:linear}

We will now make the first step towards perturbing the approximate $Spin(7)$-instantons from the previous sections to genuine $Spin(7)$-instantons.
Like in other gluing problems, this makes use of a fixed point theorem and elliptic estimates for the linearisation of the partial differential equation to be solved, for us the $Spin(7)$-instanton equation.
The proof of the estimate for the linearisation of the instanton equation on $M_t$ will be given in Section \ref{section:construction}.
It will crucially use local analysis on the gluing regions of $M_t$.
Recall the singular sets $S_1, S_2, \dots, S_{76}$ in $T^8/\Gamma$ from \cref{table:singular-set-in-t8}.
$S_1,\dots, S_8$ are isometric to $T^4/\{ \pm 1 \}$; $S_{73}, \dots, S_{76}$ are isometric to $T^4$; and analysis around the resolution of these singularities will reduce to analysis on $\R^4 \times X$, which is explained in Section \ref{subsection:operator-on-r4-times-ale}. 
The singular sets $S_9, \dots, S_{72}$ are points, and analysis around the resolution of these singularities will reduce to analysis on $X \times X$, which is explained in Section \ref{subsection:operator-on-x-x}.
Here, $X$ denotes the Eguchi--Hanson space as usual and it comes with the blowup map $\rho: X \rightarrow \C^2/\{ \pm 1 \}$.

\subsection{Linear problem on $\R^4 \times X$}
\label{subsection:operator-on-r4-times-ale}

The estimates stated in this section are mostly adaptations of similar ones in Walpuski \cite{Walp3} to our $Spin(7)$-instanton setting.

In \cref{definition:weighted-hoelder-norm-on-Mt} we defined weighted H\"{o}lder norms on $M_t$.
In the following definition, we define weighted H\"{o}lder norms on the gluing piece $\R^4 \times X$.
The norms are related by a rescaling, which is made precise at the end of this section in \cref{proposition:s-beta-estimates}.

\begin{definition}[Weighted H\"{o}lder norm]
\label{definition:weighted-hoelder-norm-on-Eguchi-times-R4}
 We denote by $p_2$ the projection onto the second factor of $\R^4 \times X$ and define the weight function by 
 \[
  w(x)
  =
  1+
  |\rho \circ p_2(x)| . 
 \]
 We define $w(x,y)$,
  $[\cdot]
  _{C^{0,\alpha}_\beta(U)}$,
  $\|{\cdot}_{L^\infty_\beta(U)}$,
 and $\|{\cdot}_{C^{k,\alpha}_\beta(U)}$
 in the same manner as in \cref{definition:weighted-hoelder-norm-on-Mt}.
\end{definition}

The Eguchi--Hanson space $X$ is a hyper-K\"{a}hler manifold which implies that $\R^4 \times X$ is a $Spin(7)$-manifold.
This is essentially a linear algebra computation which is explained in the following lemma from Walpuski \cite{Walp3}:

\begin{lemma}[Examples 1.11, and 1.44 in \cite{Walp3}]
\label{lemma:g2-spin7-linear-algebra}
Let $\omega = (\omega_1, \omega_2, \omega_3)$ and $\mu = ( \mu_1,\mu_2,\mu_3) $ be two hyper-K\"{a}hler triples on $\R^4$. 
Consider $\R^8 = (\R^4, \mu ) \times (\R^4 , \omega )$. 
Let $(x^0,x^1,x^2,x^3)$ be coordinates on $(\R^4 , \mu )$ such that
\begin{align*}
 \mu_1=\d x^{01}+\d x^{23}, \;
 \mu_2=\d x^{02}-\d x^{13}, \;
 \mu_3=\d x^{03}+\d x^{12}.
\end{align*}
Then we have that four-form $\Omega_0 :=
  \frac{1}{2} \omega_1 \wedge \omega_1
  +
  \frac{1}{2} \mu_1 \wedge \mu_1
  -
  \sum_{i=1}^3
  \omega_i \wedge \mu_i$ on $(\R^4 , \mu ) \times ( \R^4 , \omega )$ 
  has stabiliser $Spin(7)$.
\end{lemma}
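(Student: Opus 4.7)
The plan is to verify both claims by explicit linear-algebraic identification with the standard models of $G_2$- and $Spin(7)$-forms. The proof is essentially a coordinate computation relying only on the hyper-K\"ahler relations on $\R^4$.

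For part $(1)$, I would fix orthonormal coordinates $(y^4, y^5, y^6, y^7)$ on $(\R^4, \omega)$ in which the hyper-K\"ahler triple takes the canonical form
\[
 \omega_1 = dy^{45}+dy^{67}, \quad
 \omega_2 = dy^{46}-dy^{57}, \quad
 \omega_3 = dy^{47}+dy^{56},
\]
which is possible since any hyper-K\"ahler triple on $\R^4$ is $SO(4)$-equivalent to this standard one. Setting $y^i := x^i$ for $i=1,2,3$ and substituting, $\varphi_0$ becomes an explicit three-form on $\R^7$ which, possibly after reflecting some of the $y^j$, matches the standard $G_2$ three-form of Bryant; its stabiliser in $GL(7,\R)$ is known to equal $G_2$ (see e.g.\ \cite[\S 10.2]{Joyc5}).

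For part $(2)$, I would fix in addition a hyper-K\"ahler basis of $(\R^4, \mu)$ compatible with the coordinates $(x^0, x^1, x^2, x^3)$ already chosen. In the resulting orthonormal basis of $\R^8$, compute $dx^0 \wedge \varphi_0 + *_{\varphi_0} \varphi_0$ directly, and then re-express the result in terms of the $\omega_i$'s and $\mu_i$'s using the hyper-K\"ahler identities $\omega_i \wedge \omega_j = 2\delta_{ij}\vol_\omega$ and $\mu_i \wedge \mu_j = 2\delta_{ij}\vol_\mu$, together with the fact that the two $\R^4$-factors are orthogonal. This yields the invariant formula for $\Omega_0$ stated in the lemma. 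One then invokes the standard fact that, for any $G_2$-form $\varphi$ on $\R^7$ with Hodge dual $*_7\varphi$, the four-form $dx^0 \wedge \varphi + *_7\varphi$ on $\R \times \R^7$ has stabiliser $Spin(7)$ (see e.g.\ \cite[\S 10.5]{Joyc5}); applied to $\varphi = \varphi_0$, this gives the claim.

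The only real obstacle is careful bookkeeping of signs in the coordinate computation of $*_{\varphi_0}\varphi_0$ and the subsequent repackaging of the resulting monomials in standard coordinates into the basis-free expression involving the $\omega_i$'s and $\mu_i$'s; no conceptual difficulty arises beyond this.
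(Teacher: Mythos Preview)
Your proposal is correct and is the standard way to verify these facts. Note, however, that the paper does not supply its own proof of this lemma: it is stated with a reference to Examples~1.11 and~1.44 of \cite{Walp3} and no argument is given in the text. Your coordinate computation---choosing orthonormal hyper-K\"ahler frames on both $\R^4$-factors, matching $\varphi_0$ with the standard $G_2$ three-form, then invoking the general fact that $\d x^0\wedge\varphi+*_\varphi\varphi$ has stabiliser $Spin(7)$---is exactly the kind of verification that underlies the cited examples, and your identification of sign bookkeeping as the only genuine issue is accurate.
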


The following is about how the linearised operator of the $Spin(7)$-instanton equation on $\R^4 \times X$ and that of the ASD instanton equation on $X$ are related:

\begin{corollary}[Equation 5.23 in \cite{Walp3}]
 Let $(E,A)$ be a bundle with a finite energy ASD instanton over a hyper-K\"{a}hler four-manifold $X$.
 Denote by $p_X:\R^4 \times X \rightarrow X$ the projection onto the second factor.
 Then $(p_X^* E, p_X^*A)$ is a $Spin(7)$-instanton with respect to the $Spin(7)$-structure from Lemma \ref{lemma:g2-spin7-linear-algebra}.
 
 Then we have identifications
 \begin{align*}
  \Lambda^1(\R^4 \times X)
  &\simeq
  \Lambda^0(\R^4 \times X)
  \oplus
  p_X^*\Lambda^1(X)
  \oplus
  p_X^*\Lambda^2_+(X),
  \\
  \Lambda^0(\R^4 \times X) \oplus \Lambda^2_7(\R^4 \times X)
  &\simeq
  p_X^* \Lambda^1(X)
  \oplus
  p_X^* \Lambda^2_+(X)
  \oplus
  p_X^* \Lambda^0(X), 
 \end{align*}
where $\Lambda^2_+ (X)$ is the bundle of self-dual two-forms on $X$, 
 under which we obtain 
 \begin{align}
 \label{equation:LL*-L*L-on-R4-times-X}
  L_{p_X^*A} L_{p_X^*A}^*
  =
  L_{p_X^*A}^* L_{p_X^*A}
  =
  \Delta_{\R^4}
  +
  \begin{pmatrix}
   \delta_A \delta_A^* & 0 \\
   0 & \delta_A^* \delta_A
  \end{pmatrix}, 
 \end{align}
where $L_{p_X^*A}$ is the linearised operator of the $Spin(7)$-instanton equation at $p_X^*A$ from \cref{equation:linearised-operator}, and 
 \begin{align*}
  \delta_A : \Omega^1(X, \Ad( E)) & \rightarrow \Omega^0(X, \Ad (E)) \oplus \Omega^2_+(X, \Ad (E))
  \\
  a & \mapsto (\d^* _A a, \d^+ _A a) 
 \end{align*}
is the linearised operator of the ASD instanton equation on $X$.
The operator $\Delta$ acts on $C^\infty(\R^4 \times X,\Lambda^k(\R^4 \times X))$ by identifying $C^\infty(\R^4 \times X,\Lambda^k(\R^4 \times X)) \simeq C^\infty(\R^4, C^\infty(X, \Lambda^k(\R^4 \times X)))$.
\end{corollary}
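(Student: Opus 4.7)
My plan is to prove the identity by writing the operator $L_{p_X^*A}$ in a well-chosen product frame and observing that it takes a Dirac-type form whose square separates the $\R^4$ and $X$ contributions.

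First, I would verify that $p_X^*A$ is a $Spin(7)$-instanton with respect to $\Omega_0$ from \cref{lemma:g2-spin7-linear-algebra}. Since $F_{p_X^*A}=p_X^*F_A$ is the pullback of a two-form on $X$ and $A$ is ASD, we have $F_A\wedge\omega_i=0$ for $i=1,2,3$. Pairing this with the expression $\Omega_0=\tfrac12\omega_1\wedge\omega_1+\tfrac12\mu_1\wedge\mu_1-\sum_i\omega_i\wedge\mu_i$ and computing $*(\Omega_0\wedge F_{p_X^*A})$ one finds that $F_{p_X^*A}$ is a $(-1)$-eigenvector of $S=*(\Omega_0\wedge\cdot)$, hence lies in $\Lambda^2_{21}$, so $\pi^2_7(F_{p_X^*A})=0$.

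Second, I would pin down the identifications. Let $\{dx^0,dx^1,dx^2,dx^3\}$ be the standard parallel coframe on $\R^4$ and let $\omega_1,\omega_2,\omega_3$ be the parallel basis of $\Lambda^2_+(X)$ provided by the hyper-K\"ahler structure. Any $\eta\in\Omega^1(\R^4\times X)$ splits uniquely as $\eta=a_0\,dx^0+\sum_{i=1}^3 a_i\,dx^i+\alpha$ with $\alpha\in p_X^*\Omega^1(X)$ and $a_0,a_i$ functions on $\R^4\times X$. The asserted identification is
\[
 \eta \;\longleftrightarrow\;\bigl(a_0,\;\alpha,\;\textstyle\sum_i a_i\,p_X^*\omega_i\bigr)\in\Omega^0\oplus p_X^*\Omega^1(X)\oplus p_X^*\Omega^2_+(X).
\]
For the target, the splitting $\Lambda^2_7(\R^4\times X)\simeq p_X^*(\Lambda^1(X)\oplus\Lambda^0(X))\oplus\text{(residual)}$ is determined by the $Spin(7)$-linear algebra of \cref{lemma:g2-spin7-linear-algebra}: the seven-dimensional summand decomposes into one copy of $\mathfrak{sp}(1)$-type forms on $\R^4$, a copy of $\Lambda^1(\R^4)\otimes\Lambda^1(X)$ coming from mixed terms, and a "trace" piece; rewriting these using the parallel sections yields the stated target $p_X^*\Lambda^1(X)\oplus p_X^*\Lambda^2_+(X)\oplus p_X^*\Lambda^0(X)$. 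Verifying this amounts to matching eigenspaces of $S$ under the product frame, which is routine linear algebra.

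Third, since $p_X^*A$ is pulled back from $X$ and the metric on $\R^4$ is flat, the covariant derivatives in the $\R^4$-directions are ordinary partial derivatives and commute with the $X$-covariant derivatives in $\nabla_A$. Writing $L_{p_X^*A}=(d_{p_X^*A}^*,\pi^2_7\,d_{p_X^*A})$ in the identifications above, one sees that it takes the form
\[
 L_{p_X^*A}\;=\;\sum_{\mu=0}^3 dx^\mu\cdot\partial_\mu \;+\; \mathbf{D}_A,
\]
where $dx^\mu\cdot(\,\cdot\,)$ is the Clifford-type action induced by the identifications (so that the four symbols give an orthonormal Clifford module over $\R^4$) and $\mathbf{D}_A$ is the fibrewise operator built from $\delta_A$ acting block-diagonally as $\mathrm{diag}(\delta_A,\delta_A^*)$ on $p_X^*\Omega^1(X)\oplus p_X^*\Omega^2_+(X)$. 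The main check, and the step I expect to be the principal technical obstacle, is showing that the cross terms between the Clifford action and $\mathbf{D}_A$ vanish: this reduces to the fact that the identifications were chosen so that the symbol of the $\R^4$-derivative part is an orthonormal Clifford multiplication anticommuting with itself and commuting with $\mathbf{D}_A$, which in turn is encoded in the $Spin(7)$-compatibility of the hyper-K\"ahler triples $\mu$ and $\omega$ in \cref{lemma:g2-spin7-linear-algebra}.

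Finally, squaring the expression above and using $\{dx^\mu\cdot,dx^\nu\cdot\}=-2\delta^{\mu\nu}$ together with $[dx^\mu\cdot,\mathbf{D}_A]=0$ gives $L_{p_X^*A}L_{p_X^*A}^*=L_{p_X^*A}^*L_{p_X^*A}=\Delta_{\R^4}+\mathbf{D}_A^2$, and $\mathbf{D}_A^2$ is exactly the block-diagonal matrix $\mathrm{diag}(\delta_A\delta_A^*,\delta_A^*\delta_A)$ in the chosen identification. This yields \eqref{equation:LL*-L*L-on-R4-times-X}.
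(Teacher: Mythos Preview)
The paper does not supply its own proof of this corollary: it is stated with the citation ``Equation 5.23 in \cite{Walp3}'' and the text immediately moves on to the next lemma. So there is no argument in the paper to compare your proposal against; the authors simply import the result from Walpuski's thesis.

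Your outline is the standard route and is essentially what is done in \cite{Walp3}: identify both domain and codomain of $L_{p_X^*A}$ with the pullback of the same rank-$8$ bundle $\Lambda^0(X)\oplus\Lambda^1(X)\oplus\Lambda^2_+(X)$ over $X$, observe that the operator then splits as a Clifford-type $\R^4$-Dirac part plus a fibrewise operator built from $\delta_A$, and square. One notational slip: your $\mathbf{D}_A$ cannot be ``block-diagonal as $\operatorname{diag}(\delta_A,\delta_A^*)$'' since $\delta_A$ and $\delta_A^*$ exchange $\Omega^1(X)$ and $\Omega^0(X)\oplus\Omega^2_+(X)$; what you want is the \emph{off}-diagonal operator $\mathbf{D}_A=\begin{pmatrix}0&\delta_A\\\delta_A^*&0\end{pmatrix}$ on $(\Omega^0\oplus\Omega^2_+)\oplus\Omega^1$, whose square is the block-diagonal $\operatorname{diag}(\delta_A\delta_A^*,\delta_A^*\delta_A)$ as required. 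With that correction, your argument goes through. In \cite{Walp3} the computation is organised slightly differently---Walpuski first treats the $G_2$-case on $\R^3\times X$ and then passes to $Spin(7)$ via the relation $\Omega_0=dx^0\wedge\varphi_0+*_{\varphi_0}\varphi_0$---but the content is the same.
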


We state the following auxiliary lemma that will be used at different points throughout the article:

\begin{lemma}[Lemma 2.76 in \cite{Walp3}]
\label{lemma:constant-in-R-n-direction}
 Let $E$ be a vector bundle of bounded geometry over a Riemannian manifold $X$ of bounded geometry and with subexponential volume growth, and suppose that $D: C^\infty(X,E) \rightarrow C^\infty(X,E)$ is a uniformly elliptic operator of second order whose coefficients and their first derivatives are uniformly bounded, that is non-negative, i.e. $\langle Da, a \rangle \geq 0$ for all $a \in L^2_2(X,E)$, and formally self-adjoint.
 If $a \in C^\infty(\R^n \times X, E)$ satisfies
 \[
  (\Delta_{\R^n} + D)a=0
 \]
 and $\|{a}_{L^\infty}$ is finite, then $a$ is constant in the $\R^n$-direction, that is $a(x,y)=a(y)$.
 Here, by slight abuse of notation, let $E$ denote the pullback of $E$ to $\R^n \times X$ as well.
\end{lemma}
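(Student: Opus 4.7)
The plan is to prove the statement via a Fourier transform argument in the $\R^n$-direction, using the positivity of $|\xi|^2 + D$ for $\xi \neq 0$; the subexponential volume growth of $X$ will enter when justifying integration by parts against a rapidly decaying test function.

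Since $a \in L^\infty(\R^n \times X, E)$, its partial Fourier transform $\hat{a}$ in the $\R^n$-direction is well-defined as a distribution on $\R^n \times X$ via the pairing
\[
\langle \hat{a}, \psi\rangle := \int_{\R^n \times X} a(x,y) \, \check{\psi}(x,y) \, dV, \qquad \psi \in C_c^\infty(\R^n \times X, E),
\]
where $\check{\psi}$ denotes the inverse Fourier transform of $\psi$ in the $\xi$-variable. The equation $(\Delta_{\R^n} + D)a = 0$ translates into $(|\xi|^2 + D_y)\hat{a} = 0$ in the distributional sense.

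The key step is to show $\mathrm{supp}\, \hat{a} \subset \{0\} \times X$. Fix $\phi \in C_c^\infty(\R^n \setminus \{0\})$ and $g \in C_c^\infty(X, E)$. For each $\xi \in \mathrm{supp}(\phi)$ the operator $|\xi|^2 + D_y$ is positive self-adjoint on $L^2(X, E)$ with spectrum in $[|\xi|^2, \infty)$, hence invertible; define
\[
\hat{h}(\xi, y) := \phi(\xi) \cdot (|\xi|^2 + D_y)^{-1} g(y).
\]
Standard Agmon-type exponential decay estimates for resolvents of non-negative self-adjoint uniformly elliptic operators on manifolds of bounded geometry show that $\hat{h}(\xi, \cdot)$ decays exponentially in $y$ at a rate bounded below uniformly in $\xi \in \mathrm{supp}(\phi)$. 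Taking the inverse Fourier transform in $\xi$ produces a function $h(x,y)$ that is Schwartz in $x$, exponentially decaying in $y$, and satisfies $(\Delta_{\R^n} + D) h = \check{\phi}(x) \cdot g(y)$.

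The chain of identities
\[
\langle \hat{a}, \phi \otimes g\rangle
= \int a(x,y) \cdot \check{\phi}(x) \cdot g(y) \, dV
= \int a \cdot (\Delta_{\R^n} + D) h \, dV
= \int \bigl((\Delta_{\R^n} + D) a\bigr) \cdot h \, dV
= 0
\]
then closes the argument: integration by parts in the $\R^n$-direction is justified by the Schwartz decay of $h$ in $x$, and integration by parts in the $X$-direction is justified by the exponential decay of $h$ in $y$ combined with the subexponential volume growth of $X$, which ensures that the boundary integrals over $\partial B_R \subset X$ vanish as $R \to \infty$ (the area of $\partial B_R$ grows subexponentially while $h$ decays exponentially in $R$). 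Hence $\mathrm{supp}\, \hat{a} \subset \{0\} \times X$, so $\hat{a}$ is a finite linear combination $\sum_{|\alpha| \leq N} \delta_0^{(\alpha)}(\xi) \otimes c_\alpha(y)$; inverse Fourier transform then shows $a(\cdot, y)$ is a polynomial in $x$ for each $y$, and the bound $\|a\|_{L^\infty} < \infty$ forces only the degree-zero term to survive, giving $a(x,y) = c_0(y)$. The main technical point is the Agmon decay estimate for the resolvent uniformly in $\xi \in \mathrm{supp}(\phi)$, which is classical given the uniform ellipticity and bounded geometry hypotheses.
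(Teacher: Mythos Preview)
The paper does not prove this lemma itself; it is quoted from Walpuski's thesis (Lemma~2.76 in \cite{Walp3}) and used as a black box. Your Fourier-analytic argument is a valid proof. The strategy---Fourier transform in the $\R^n$-variable, annihilate $\hat{a}$ away from $\xi=0$ by constructing a preimage $h$ of the test function under $\Delta_{\R^n}+D$, then invoke the structure theorem for tempered distributions supported at a point---is a natural route, and the subexponential volume growth hypothesis enters exactly where you place it.

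Two small refinements are worth noting. First, the exponential off-diagonal decay you need for $(|\xi|^2+D)^{-1}g$ with $g$ compactly supported is more precisely the Combes--Thomas bound rather than Agmon's eigenfunction estimate; it holds under the stated hypotheses (uniform ellipticity, bounded geometry, self-adjointness, spectral gap $|\xi|^2>0$ on $\operatorname{supp}\phi$), but a specific reference would tighten the write-up. Second, the integration by parts on $X$ is cleaner with smooth cutoffs $\chi_R$ than with hard boundaries $\partial B_R$: subexponential \emph{volume} growth directly controls $\int_{B_{2R}\setminus B_R}|h|\,dV$, whereas it says nothing immediate about $\mathrm{Area}(\partial B_R)$. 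With cutoffs the commutator term $[D,\chi_R]a$ involves $\nabla a$, so you implicitly use that $a$ has uniformly bounded first derivatives; this follows from interior elliptic estimates together with the bounded-geometry assumption, but deserves a sentence. None of these points is a genuine gap.
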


By similar arguments as in the proof of \cite[Proposition 2.74]{Walp3}, we obtain the following \cref{lemma:kernel-of-pullback-instanton} as a consequence of \cref{lemma:constant-in-R-n-direction}:

\begin{lemma}
\label{lemma:kernel-of-pullback-instanton}
 Let $\beta < 0$.
 Then $a \in C^{1,\alpha}_{\beta}$ is in the kernel of $L_{p^*A}: C^{1,\alpha}_\beta \rightarrow C^{0,\alpha}_{\beta-1}$ if and only if it is given by the pullback of an element of the $L^2$-kernel of $\delta_A$ to $\R^4 \times X$.
\end{lemma}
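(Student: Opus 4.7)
The plan is to exploit the identity \eqref{equation:LL*-L*L-on-R4-times-X} to reduce the problem on $\R^4 \times X$ to one on $X$ via \cref{lemma:constant-in-R-n-direction}, paralleling the strategy Walpuski uses on $\R^3 \times X$ in the $G_2$ setting. The proof splits along the two directions of the equivalence.

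For the ``if'' direction, given $\tilde{a}$ in the $L^2$-kernel of $\delta_A$, I would first invoke elliptic regularity for $\delta_A$ on the ALE end of $X$ (using the asymptotic flatness of $A$) to force $\tilde{a}$ to decay at some polynomial rate, placing $a := p_X^* \tilde{a}$ in $C^{1,\alpha}_\beta$ for the required range of $\beta < 0$. Then $L_{p^*A} a = 0$ follows from the corollary by direct computation: all $\R^4$-derivatives of $a$ vanish, and the remaining fibrewise contribution coincides precisely with $\delta_A \tilde{a}$, which is zero by hypothesis.

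For the ``only if'' direction, suppose $a \in C^{1,\alpha}_\beta \cap \ker L_{p^*A}$ with $\beta < 0$. Since $w \geq 1$ and $\beta < 0$ one has $|a| \leq \|{a}_{L^\infty_\beta}$, so $a \in L^\infty$. Applying $L_{p^*A}^*$ to $L_{p^*A} a = 0$ and invoking \eqref{equation:LL*-L*L-on-R4-times-X} gives
\[
 (\Delta_{\R^4} + D)\,a = 0,
 \qquad
 D := \diag(\delta_A \delta_A^*,\ \delta_A^* \delta_A),
\]
where $D$ is a second-order, uniformly elliptic, non-negative, formally self-adjoint operator on $X$. The Eguchi--Hanson space has bounded geometry and polynomial (hence subexponential) volume growth, so \cref{lemma:constant-in-R-n-direction} applies and forces $a(x,y)=a(y)$, i.e.\ $a = p_X^* \tilde{a}$ for some section $\tilde{a}$ over $X$. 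Substituting this back into $L_{p^*A} a = 0$ and using that $\R^4$-derivatives of $a$ vanish, the equation reduces, via the decompositions in the corollary, to $\delta_A \tilde{a} = 0$ on the $p_X^* \Lambda^1(X)$-component, with the $\Lambda^0(\R^4 \times X)$ and $p_X^* \Lambda^2_+(X)$ components required to sit in the $L^2$-kernel of $\delta_A^*$ and thereby forced to vanish.

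The main obstacle I anticipate is reconciling the decay rate $\beta<0$ with genuine $L^2$-integrability: naively, a $1$-form on the four-dimensional ALE manifold $X$ that decays like $r^\beta$ only lies in $L^2$ when $\beta < -2$. The gap is closed by a Lockhart--McOwen style asymptotic analysis for $\delta_A$ on the ALE end: solutions of $\delta_A \tilde{a} = 0$ admit asymptotic expansions governed by a discrete set of critical weights, and any kernel element whose decay rate is strictly negative and avoids these critical values automatically decays faster than $r^{-2}$ and so belongs to $L^2(X)$. The same circle of ideas, together with the triviality of $H^0$ and $H^2$ of the ASD complex for the ASD instantons in our compatible gluing data, is what eliminates the auxiliary $\Lambda^0$ and $\Lambda^2_+$ contributions above.
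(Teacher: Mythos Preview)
Your strategy is correct and is exactly the one the paper points to (Walpuski's Proposition~2.74): use \eqref{equation:LL*-L*L-on-R4-times-X} together with \cref{lemma:constant-in-R-n-direction} to force $a$ to be a pullback, then feed this back into the first-order equation $L_{p^*A}a=0$, which on pullback sections decouples into $\delta_A\tilde a_1=0$ and $\delta_A^*(\tilde a_0,\tilde a_+)=0$.

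The only place you overcomplicate matters is your final paragraph. You do not need Lockhart--McOwen indicial analysis, nor do you need any rigidity or unobstructedness hypothesis on $A$ from the compatible gluing data; the lemma is stated for an arbitrary finite-energy ASD instanton. Both issues you raise are handled directly by \cref{proposition:ale-asd-kernel-cokernel}: item~(1) says that any element of $\ker\delta_A$ which merely decays to zero at infinity in fact satisfies $\nabla^k\tilde a_1=O(|\rho|^{-3-k})$, so is automatically in $L^2$; and item~(2) says that any decaying element of $\ker\delta_A^*$ vanishes identically, which kills the $\Lambda^0$ and $\Lambda^2_+$ components without any appeal to $H^0$ or $H^2$ of the deformation complex. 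Since $\beta<0$ gives decay to zero, both items apply and the gap you were worried about closes immediately.
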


This concludes the analysis of the $Spin(7)$-equation on the model space $\R^4\times X$, but there is one thing left to do before ending the section.
In Section \ref{section:construction}, we will prove estimates for the linearised $Spin(7)$-instanton operator $L_{A_t}$ on the manifold $M_t$.
We will do this by rescaling around the glued-in parts, thereby comparing sections on $\R^4 \times X$ and on $M_t$.
This rescaling is made precise by the map $s_{j,\beta,t}$ defined below.

Fix $j \in \{1, \dots, 76\}$ such that the singularity $S_j$ is isometric to $T^4$ (that is $j \in \{ 73, \dots, 76 \}$) or $T^4/\{ \pm 1 \}$ (that is $j \in \{1, \dots, 8\}$).

Part of the compatible gluing data (cf. \cref{definition:compatible-gluing-data}) is then a bundle $E_j$ over 
the Eguchi--Hanson space $X$ carrying an infinitesimally rigid ASD connection $A_j$.
We define 
\begin{align*}
 \tilde{\iota}_{j,t}: \R^4 \times \rho^{-1}(B^4_{t^{-1}\zeta} / \{ \pm 1\}) & \rightarrow T^4 \times \rho^{-1}(B^4_{t^{-1}\zeta} / \{ \pm 1\}) \subset M_t
 \text{ for } j \in \{73,\dots,76\}
 \\
 (x,y) & \mapsto [tx,y],
 \\
 \tilde{\iota}_{j,t}: \R^4 \times \rho^{-1}(B^4_{t^{-1}\zeta} / \{ \pm 1\}) & \rightarrow T^4/\{ \pm 1\} \times \rho^{-1}(B^4_{t^{-1}\zeta} / \{ \pm 1\}) \\ & \quad \subset M_t
 \text{ for } j \in \{1,\dots,8\}
 \\
 (x,y) & \mapsto [tx,y]. 
\end{align*}
Then, for $\beta \in \mathbb{R}$ we consider
\begin{align}
\label{equation:def-s-beta-t-R4-times-X}
\begin{split}
 s_{j,\beta,t}:
 \Omega^k(M_t, \Ad E_t) &\rightarrow
 \Omega^k(\R^4 \times \rho^{-1}(B^4_{t^{-1}\zeta}/ \{ \pm 1\}), p_2^*\Ad E_j)
 \\
 a & \mapsto
 t^{\beta-k}
 (\tilde{\iota}_{j,t})^* a.
\end{split}
\end{align}
Here, $p_2:\R^4 \times \rho^{-1}(B^4_{t^{-1}\zeta}/ \{ \pm 1\}) \rightarrow X$ denotes the projection onto the second factor so that $p_2^* A_j$ is a $Spin(7)$-instanton on the bundle $p_2^* E_j$.
The map $s_{j,\beta,t}$ has the property that it is close to an isometry between the weighted H\"{o}lder norm $C^{k,\alpha}_\beta$ on $\R^4 \times X$ and the weighted H\"{o}lder norm $C^{k,\alpha}_{t ; \beta}$ on $M_t$.

This is made precise in Proposition \ref{proposition:s-beta-estimates} below.
The map is not precisely an isometry because the model $Spin(7)$-structure on $\R^4 \times X$ and the torsion-free $Spin(7)$-structure on $M_t$ are not the same, but their difference is roughly of size $t^{3/10}$, which was shown in \cref{proposition:spin-7-structure-holder-estimate}.
For the $G_2$-case, the statement can be found as \cite[Proposition 7.7]{Walp1}, and we omit the proof here.

\begin{proposition}
\label{proposition:s-beta-estimates}
 There is a constant $c > 0$ such that
 \begin{align*}
  \frac{1}{c}
  \|{a}
  _{C^{k,\alpha}_{\beta,t}(\tilde{T}_{j,t})}
  \leq
  \|{s_{j,\beta,t}a}
  _{C^{k,\alpha}_{\beta}(\R^4 \times \rho^{-1}(B^4_{t^{-1}\zeta}/ \{ \pm 1\}))}
  \leq
  c
  \|{a}
  _{C^{k,\alpha}_{\beta,t}(\tilde{T}_{j,t})},
  \\
  \|{
   L_t a - s_{j,\beta-1,t}^{-1} L_{A_i} s_{j,\beta,t} a
  }
  _{C^{0,\alpha}_{\beta-1,t}(\tilde{T}_{j,t})}
  \leq
  c t^{3/10}
  \|{a}
  _{C^{0,\alpha}_{\beta,t}(\tilde{T}_{j,t})},
  \\
  \|{
   L_t^* \underline{b} - s_{j,\beta-1,t}^{-1} L_{A_i}^* s_{j,\beta,t} \underline{b}
  }
  _{C^{0,\alpha}_{\beta-1,t}(\tilde{T}_{j,t})}
  \leq
  c t^{3/10}
  \|{\underline{b}}
  _{C^{0,\alpha}_{\beta,t}(\tilde{T}_{j,t})}.
 \end{align*}
\end{proposition}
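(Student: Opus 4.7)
The strategy is to interpret $\tilde{\iota}_{j,t}$ as an approximate isometry from $(\R^4 \times \rho^{-1}(B^4_{t^{-1}\zeta}/\{\pm 1\}), g_{\R^4}+g_X)$ onto $(\tilde{T}_{j,t}, t^{-2} g_t)$, and to read off each of the three estimates from how the objects transform under this identification. The argument parallels that of \cite[Proposition 7.7]{Walp1} in the $G_2$-case, with the only new input being the sharpened H\"older estimate \cref{proposition:spin-7-structure-holder-estimate}.

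\textbf{First estimate.} On $\tilde{T}_{j,t}$ the Spin(7)-metric $g_t$ agrees with the product $g_{T^4} + t^2 g_X$ up to an error of size $O(t^{7/2})$ via \cref{proposition:spin-7-difference-to-product-structure}, and the weight functions satisfy $w_t \circ \tilde{\iota}_{j,t} = t \cdot w$. Under the dilation $(x,y) \mapsto (tx,y)$ the pointwise norm of a $k$-form and the weight $w^{-\beta}$ pick up explicit powers of $t$, and the prefactor $t^{\beta-k}$ in \eqref{equation:def-s-beta-t-R4-times-X} is chosen precisely so that these powers cancel modulo the metric-perturbation. The H\"older seminorm component is handled identically once one checks that distances and parallel transport transform covariantly with the same powers. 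The estimate is thus a routine linear-algebraic computation with $c$ absorbing the $O(t^{7/2})$ error.

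\textbf{Second and third estimates.} For the operator comparison one writes $L_t a - s_{j,\beta-1,t}^{-1} L_{A_j} s_{j,\beta,t} a$ as a sum of three terms, corresponding to the three places where $L_t$ and $L_{A_j}$ may disagree after transport: the Spin(7)-structure used in $\pi^2_7$ via \eqref{eq:spin7-projections}, the connection $A_t$ versus $p_2^*A_j$, and the Hodge star used in $\d_A^*$. Combining \cref{proposition:spin-7-structure-holder-estimate} with \cref{proposition:spin-7-difference-to-product-structure} yields
\[
 \|{\tilde{\Omega}_t-\Omega_j}_{C^{0,\alpha}_{0,t}}
 \leq
 \|{\tilde{\Omega}_t-\Omega_t}_{C^{0,\alpha}_{0,t}}
 + \|{\Omega_t-\Omega_j}_{C^{0,\alpha}_{0,t}}
 \leq c t^{3/10} + c t^{7/2} \leq c t^{3/10},
\]
and since $\pi^2_7$ and the Hodge star depend algebraically on the Spin(7)-structure, the Banach-algebra property of the weighted H\"older norm yields an error of size $c t^{3/10} \|{a}_{C^{0,\alpha}_{\beta,t}}$. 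For the connection-difference contribution $A_t - p_2^*A_j = (\chi-1) a_j$, the decay $\nabla^\ell a_j = t^{2+\ell} \mathcal{O}(r_t^{-3-\ell})$ from \eqref{equation:pregluing-difference-decay}, together with the fact that $1-\chi$ is supported in $\{r_t \geq \zeta/4\}$, makes this term $O(t^{7/2})$ or better, hence absorbed into the $t^{3/10}$ bound. The adjoint estimate is identical in structure, since $L_t^*$ also depends on the data only through the Hodge star and $\pi^2_7$, whose variations were just controlled.

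\textbf{Main obstacle.} The essential input is the sharpened H\"older estimate \cref{proposition:spin-7-structure-holder-estimate}, whose proof occupies the preceding subsection; once that is in hand the proposition reduces to the bookkeeping sketched above. The exponent $3/10$, inherited from the $L^{10}$ gradient bound in Joyce's existence theorem \cref{th:Joyce_tfsp7}, is precisely what is needed in order for the linearised estimate to be compatible with the contraction-mapping argument to be carried out in \cref{subsection:perturbation-step}; the na\"ive $t^{1/2}$ coming from only the unweighted $C^0$ estimate of Joyce would force a more delicate balancing and ultimately not close.
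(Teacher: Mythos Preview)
The paper omits the proof of this proposition entirely, deferring to \cite[Proposition 7.7]{Walp1} for the $G_2$-analogue. Your sketch is a correct fleshing-out of exactly that reference, and the structure you give---first estimate via weight bookkeeping under rescaling, second and third via decomposing the operator difference into Spin(7)-structure, Hodge star, and connection contributions---is precisely how Walpuski's argument goes, with \cref{proposition:spin-7-structure-holder-estimate} supplying the $t^{3/10}$ as you identify.

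One minor correction: the connection-difference term $(\chi-1)a_j$ is $O(t^2)$ in the relevant weighted norm (pointwise size $t^2 r_t^{-3}$ on the region $r_t \geq \zeta/4$), not $O(t^{7/2})$ as you write; but since $t^2 \ll t^{3/10}$ this does not affect the conclusion. Your final remark about the $C^0$ estimate of Joyce is slightly off in emphasis: the issue is not that the exponent $1/2$ is too weak, but that a $C^0$ bound alone gives no control on the H\"older seminorm of $\tilde{\Omega}_t-\Omega_t$, which is what the operator comparison in weighted $C^{0,\alpha}$-norms actually requires. This is why \cref{proposition:spin-7-structure-holder-estimate} upgrades to $C^{0,\alpha}_{0;t}$ (at the cost of a worse exponent) rather than simply quoting Joyce's $C^0$ bound.
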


\subsection{Linear problem on $X \times X$}
\label{subsection:operator-on-x-x}

Around point singularities on $T^8/\Gamma$ we glue in copies of $X \times X$ for the manifold construction.
For our $Spin(7)$-instanton construction we glue in the trivial connection over $X \times X$.
Because of this, we study the deformation theory of the trivial connection over $X \times X$ in this section.

\begin{definition}[Weighted H\"{o}lder norm]
\label{definition:weighted-hoelder-norm-on-product-of-Eguchi-Hansons}
 We denote by $p_i \, (i=1,2)$  the projection onto the $i$-th factor of $X \times X$, or $(\C^2 /\{ \pm 1 \}) \times (\C^2 /\{ \pm 1 \})$, according to context.
 Define the weight function by 
 \begin{align*}
  &w(x)
  =
  1+
  \min \{ |\rho \circ p_1(x)|, |\rho \circ p_2(x)| \}
  &&
  \text{ on }
  X \times X,
  \\
  &w(x)
  =
  1+
  \min \{ | p_1(x)|, | p_2(x)| \}
  &&
  \text{ on }
  (\C^2 /\{ \pm 1 \}) \times (\C^2 /\{ \pm 1 \}).
 \end{align*}
 We then define $w(x,y)$,
  $[\cdot]
  _{C^{0,\alpha}_\beta(U)}$,
  $\|{\cdot}_{L^\infty_\beta(U)}$, and 
  $\|{\cdot}_{C^{k,\alpha}_\beta(U)}$ in the same way 
 as in \cref{definition:weighted-hoelder-norm-on-Mt}.
\end{definition}

The manifold $X \times X$ is a special case of a QALE manifold, defined in Joyce \cite{Joyc5}.
Our norms are equivalent to the norms from \cite[Definition 9.5.5]{Joyc5}, if one chooses $\beta=0$ in that definition.
In turn, QALE manifolds are special cases of the QAC manifolds from Degeratu--Mazzeo \cite{Degeratu} and how weighted norms on these spaces relate to the weighted norms in \cite{Joyc5} is explained in \cite[Section 8.1]{Degeratu}.

The goal of this section is to prove the following:

\begin{proposition}
    \label{proposition:trivial-connectionon-X-X-unobstructed-rigid}
    Let $L:=(\d^*, \d_7): \Omega^1(X \times X) \rightarrow (\Omega^0 \oplus \Omega^2_7)(X \times X)$ and let $L^*$ be its formal adjoint.
    Then 
    \[
        \Ker L=0,
        \Ker L^*=0,
    \]
    if acting on $C^{1,\alpha}_\beta$-sections for any $\beta < 0$.    
\end{proposition}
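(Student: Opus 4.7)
The plan is to recognise $L$ and $L^*$ as Dirac-type operators via the parallel spinor defining the $Spin(7)$-structure on $X \times X$, and then invoke the Lichnerowicz formula on this Ricci-flat product manifold to reduce both kernel statements to the absence of decaying parallel spinors.

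Concretely, the parallel spinor $\psi$ associated with the $Spin(7)$-structure on $X \times X$ yields canonical rank-$8$ identifications
\[
T^*(X \times X) \;\cong\; S^-, \qquad \Lambda^0 \oplus \Lambda^2_7 \;\cong\; S^+
\]
under which $L$ and $L^*$ correspond (up to a constant) to the Dirac operator $\slashed{D}\colon \Gamma(S^-) \to \Gamma(S^+)$ and its formal adjoint; this is the $Spin(7)$ analogue of the $G_2$ statement of Reyes Carri\'on \cite{Reye}. Since $X \times X$ is Ricci-flat, the Lichnerowicz formula $\slashed{D}^*\slashed{D} = \nabla^*\nabla + \tfrac{1}{4}\Scal$ simplifies to
\[
L^*L = \nabla^*\nabla \text{ on } \Omega^1, \qquad LL^* = \nabla^*\nabla \text{ on } \Omega^0 \oplus \Omega^2_7.
\]
Hence any $a \in \Ker L \cap C^{1,\alpha}_\beta$ (respectively $(f,\omega) \in \Ker L^* \cap C^{1,\alpha}_\beta$) satisfies $\nabla^*\nabla(\cdot) = 0$. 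Using the product decomposition $\nabla^*\nabla = \nabla_1^*\nabla_1 + \nabla_2^*\nabla_2$ and expanding the solution in spectral data of the connection Laplacian on one $X$-factor, the positive-eigenvalue modes vanish by coercivity while the zero modes reduce to parallel sections, forcing the whole solution to be parallel on $X \times X$.

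It then suffices to rule out decaying parallel spinors on $X \times X$. Since Eguchi--Hanson has holonomy $SU(2)$ and hence admits two parallel positive spinors and no parallel negative spinors, the tensor-product decomposition of the product spinor bundle gives that $X \times X$ admits four parallel positive spinors (all from $S^+_1 \otimes S^+_2$) and no parallel negative spinors. Any parallel spinor has constant non-zero pointwise norm, so none lies in the weighted space $C^{1,\alpha}_\beta$ for $\beta<0$. Hence both $\Ker L$ and $\Ker L^*$ are trivial on $C^{1,\alpha}_\beta$-sections, as claimed.

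The main technical obstacle will be making the separation-of-variables step rigorous on the QALE manifold $X \times X$, whose weight $w = 1 + \min\{|p_1|,|p_2|\}$ does not give decay in regions where only one factor is large. One workaround is first to improve the decay of the solution from $C^{1,\alpha}_\beta$ with $\beta<0$ to some $\beta' \leq -3$ via an indicial-root analysis for $\nabla^*\nabla$ on $X \times X$ (available from the QALE framework of \cite{Joyc5} and \cite{Degeratu}), after which the standard integration by parts $\int |\nabla a|^2 = 0$ closes the argument directly; a secondary subtlety is verifying the precise correspondence $L \leftrightarrow \slashed{D}$ under the conventions of \cref{lemma:g2-spin7-linear-algebra}.
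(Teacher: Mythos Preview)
Your Lichnerowicz reduction to $\nabla^*\nabla a = 0$ is correct and is also the starting point in the paper. The gap is in the step that follows: the separation-of-variables and spectral argument on $X \times X$ does not work with the weight $w = 1 + \min\{|\rho p_1|,|\rho p_2|\}$, and improving $\beta$ to some $\beta' \le -3$ in the \emph{same} weight does not rescue it. The point is that $w$ fails to grow in the QALE ends where one factor stays bounded and the other tends to infinity. Thus a section with $|a| \le c\,w^{\beta}$ is merely bounded on each slice $\{x_1\} \times X$ and on each slice $X \times \{x_2\}$; it need not lie in $L^2(X \times X)$ for any $\beta$, and the Laplacian on the non-compact factor $X$ has continuous spectrum $[0,\infty)$, so there is no discrete eigenfunction expansion in which ``coercivity kills positive modes''. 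For the same reason the boundary term in the integration by parts $\int |\nabla a|^2 = \int \langle \nabla^*\nabla a, a\rangle$ does not vanish: the volume element grows like $r^7\,dr$ in the radial variable $r = (|x_1|^2+|x_2|^2)^{1/2}$, whereas $w^{\beta'}$ gives no decay whatsoever in the regions $\{|x_1| \le 1\}$ or $\{|x_2| \le 1\}$. The QALE Fredholm theory of \cite{Joyc5,Degeratu} does give useful information, but it is formulated with \emph{two} independent weight parameters (one for each depth of stratum), and an indicial-root improvement in the single parameter $\beta$ does not translate into the product-type decay needed for your final step.

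The paper circumvents this entirely by a rescaling/blow-down argument (following \cite[Section 4.2.3]{Chiu}): from subharmonicity of $|a|^2$ and the Li--Schoen mean-value inequality one gets uniform $C^0_\beta$-bounds on the family $a_\epsilon := (\phi_\epsilon)_*a / \|(\phi_\epsilon)_*a\|_{L^2(B_1)}$ as the Eguchi--Hanson neck parameter $\epsilon \to 0$, and Arzel\`a--Ascoli produces a nonzero limit $a^*$ on the tangent cone $(\C^2/\{\pm 1\})^2$ satisfying $L^*a^* = 0$ with the same decay. On the cone one can pull back to $\C^2 \times \C^2$ and apply \cref{lemma:constant-in-R-n-direction} twice (once for each $\R^4$-factor), forcing $|a^*|$ to be constant and hence zero, which is the desired contradiction. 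The key difference from your plan is that the paper never tries to do spectral analysis or integration by parts on $X \times X$ itself; it passes to the flat model where the $\R^4$-translation invariance makes \cref{lemma:constant-in-R-n-direction} available.
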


We note that $X \times X$ is a QALE manifold asymptotic to $(\C^2 /\{ \pm 1 \}) \times (\C^2 /\{ \pm 1 \})$, and first check that the corresponding statement on the asymptotic limit is true:

\begin{proposition}
    \label{poposition:trivial-connection-rigid-on-c2-times-c2}
    Let $L:=(\d^*, \d_7): \Omega^1((\C^2 /\{ \pm 1 \}) \times (\C^2 /\{ \pm 1 \})) \rightarrow (\Omega^0 \oplus \Omega^2_7)((\C^2 /\{ \pm 1 \}) \times (\C^2 /\{ \pm 1 \}))$ and let $L^*$ be its formal adjoint.
    Then 
    \[
        \Ker L=0,
        \Ker L^*=0,
    \]
    if acting on $C^{1,\alpha} \cap C^0_\beta$-sections for any $\beta < 0$.
\end{proposition}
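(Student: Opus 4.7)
The strategy is to pass to the universal cover $\mathbb{R}^8 = \mathbb{C}^2 \times \mathbb{C}^2$, on which the flat $Spin(7)$-structure lets us convert the kernel problem into a Liouville-type statement via \cref{equation:LL*-L*L-on-R4-times-X}. Since $(\mathbb{C}^2/\{\pm 1\}) \times (\mathbb{C}^2/\{\pm 1\}) = \mathbb{R}^8/\Gamma$ with $\Gamma = \{\pm 1\} \times \{\pm 1\}$, any $a \in \Ker L \cap C^{1,\alpha} \cap C^0_\beta$ lifts to a $\Gamma$-invariant section $\tilde a$ on $\mathbb{R}^8$ that still satisfies $L\tilde a = 0$ and $|\tilde a(x)| \leq c(1 + \min\{|p_1 x|, |p_2 x|\})^\beta$. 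In particular $\tilde a$ is globally bounded and $\tilde a(x_n) \to 0$ along any sequence with $\min\{|p_1 x_n|, |p_2 x_n|\} \to \infty$.

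Next, I view $\mathbb{R}^8 = \mathbb{R}^4 \times X_0$ with $X_0 = \mathbb{R}^4$ regarded as a flat hyperK\"{a}hler manifold and with the trivial connection $A_0$ playing the role of the ASD instanton. Then \cref{equation:LL*-L*L-on-R4-times-X} specialises to
\[
 L^* L = L L^* = \Delta_{\mathbb{R}^4} + \begin{pmatrix} \delta_{A_0}\delta_{A_0}^* & 0 \\ 0 & \delta_{A_0}^*\delta_{A_0} \end{pmatrix},
\]
in which the matrix operator on $X_0$ is non-negative, formally self-adjoint, and has bounded coefficients (it is translation-invariant on Euclidean space). From $L\tilde a = 0$ one deduces $L^*L \tilde a = 0$, and since $\tilde a \in L^\infty$, an application of \cref{lemma:constant-in-R-n-direction} with $n=4$ shows that $\tilde a(x,y) = b(y)$ depends only on the $X_0$-coordinate.

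To finish, I apply the same argument to the decomposition $\mathbb{R}^8 = \mathbb{R}^4 \times X_0'$ in which the roles of the two Euclidean $\mathbb{R}^4$-factors are interchanged; the standard $Spin(7)$-structure on $\mathbb{R}^4 \times \mathbb{R}^4$ coming from \cref{lemma:g2-spin7-linear-algebra} is invariant under this swap up to a relabelling of the hyperK\"{a}hler triples, so the analogue of \cref{equation:LL*-L*L-on-R4-times-X} holds for the swapped decomposition and a second application of \cref{lemma:constant-in-R-n-direction} shows that $b$ is constant in $y$ as well. Hence $\tilde a$ is constant on $\mathbb{R}^8$, and evaluating along a diagonal sequence $(y_n, y_n)$ with $|y_n| \to \infty$ forces this constant to vanish because $\beta < 0$. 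The statement $\Ker L^* = 0$ follows from the identical argument, using that \cref{equation:LL*-L*L-on-R4-times-X} also identifies $LL^*$ with the same positive operator.

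The main technical point is justifying the symmetric factor-swap used in the iterated Liouville step; this is essentially a piece of linear algebra extracted from \cref{lemma:g2-spin7-linear-algebra}. Alternatively, one can bypass it by invoking directly the Weitzenb\"{o}ck identity $L^*L = \nabla^*\nabla$ for the $Spin(7)$-deformation operator at a flat connection on flat $\mathbb{R}^8$, which makes each component of $\tilde a$ harmonic on $\mathbb{R}^8$ and hence constant by the classical Liouville theorem, after which the diagonal decay argument applies unchanged.
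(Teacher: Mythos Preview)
Your proof is correct and follows essentially the same route as the paper: lift to $\mathbb{C}^2\times\mathbb{C}^2$, use \cref{equation:LL*-L*L-on-R4-times-X} together with \cref{lemma:constant-in-R-n-direction} to force the section to be a pullback from one factor, swap the factors to conclude it is constant, and then use the $C^0_\beta$ decay with $\beta<0$ to kill the constant. Your added remark about the Weitzenb\"ock alternative is a valid shortcut that bypasses the factor-swap justification, but it is not needed for the main argument.
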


\begin{proof}
    Let $a \in \Omega^1((\C^2 /\{ \pm 1 \}) \times (\C^2 /\{ \pm 1 \}))$ such that $La=0$.
    We pull back $a$ and $L$ under the quotient map
    $\C^2 \times \C^2 \rightarrow (\C^2 /\{ \pm 1 \}) \times (\C^2 /\{ \pm 1 \})$
    and will carry on with analysis on $\C^2 \times \C^2$, without changing notation.
    
    Elliptic regularity implies that $a$ is smooth, so $La=0$ implies $L^* L a=0$.
    By \cref{equation:LL*-L*L-on-R4-times-X}:
    \[
    \left(
    \Delta_{\C^2}
  +
  \begin{pmatrix}
   \delta \delta^* & 0 \\
   0 & \delta^* \delta
  \end{pmatrix}
  \right)
  a=0
    \]
    where $\delta$ denotes the ASD deformation operator of the trivial connection on $\C^2$.
    By \cref{lemma:constant-in-R-n-direction} we have that $a$ is a pullback from $\C^2$.
    In particular its norm in the $(\C^2 \times \{0\})$-direction is constant, i.e. $|a|(x,y)=|a|(x',y)$ for all $x,x',y \in \C^2$.
    By swapping the two $\C^2$-factors in the argument we see that $|a|$ is constant on all of $\C^2 \times \C^2$.
    That is a contradiction to $a \in C^0_\beta$.

    The proof for $\Ker L^*=0$ is analogous.
\end{proof}

The idea for the following proof of \cref{proposition:trivial-connectionon-X-X-unobstructed-rigid} is taken from \cite[Section 4.2.3]{Chiu}.

\begin{proof}[Proof of \cref{proposition:trivial-connectionon-X-X-unobstructed-rigid}]
    We only prove $\Ker L^*=0$, the proof for $\Ker L=0$ is analogous.
    Assume there is $0 \neq a \in \Ker L^*$ with
    \begin{align}
        \label{equation:c2alpha-bound-on-x-times-x}
        \|{a}_{C^{1,\alpha}_\beta}<c_1.
    \end{align}
    By elliptic regularity, $a$ is smooth.
    By \cite[Section 3.3]{Lewi}, $L$ can be identified with the Dirac operator.
    By the Lichnerowicz formula \cite[Theorem II.8.8]{LaMi} we thus have $\nabla^*\nabla a=LL^*a=0$ and therefore $|a|^2$ is a subharmonic function, i.e.
    % \begin{align*} % old
    %     \label{equation:a-harmonic}
    %     \Delta |a|^2
    %     =
    %     |\nabla a|^2+
    %     \langle \nabla^* \nabla a, a \rangle
    %     =
    %     |\nabla a|^2
    %     \geq 0. 
    % \end{align*}
    \begin{align} % new
        \label{equation:a-harmonic}
        \Delta |a|^2
        =
        -2|\nabla a|^2
        +
        2
        \langle \nabla^* \nabla a, a \rangle
        =
        -2|\nabla a|^2
        \leq 0.
    \end{align}
    Here, $\Delta$ denotes Laplace operator with the sign convention that $\Delta$ be positive definite.
    The Weitzenböck formula \cite[Equation 6, Corollary 2.3]{Semm} states that $\nabla^* \nabla=\Delta$ on $\Lambda^0 \oplus \Lambda^2_7$, and hence that $\Delta a=0$.
    The plan of the proof is to define a sequence of rescalings $a_{\epsilon_i}$ converging to a harmonic form on the singular space $\C^2/\{\pm 1\} \times \C^2/\{\pm 1\}$, which will lead to a contradiction to Proposition \ref{poposition:trivial-connection-rigid-on-c2-times-c2}.

    On $X \times X$ we have a natural one-parameter family of metrics $g_\epsilon$ (see \cite[p.160]{Joyc5}), where $\epsilon$ is the size of a minimal $S^2$ in each of the $X$ factors.
    These are conformally equivalent:
    there exists a diffeomorphism $\phi_\epsilon: X \times X \rightarrow X \times X$ satisfying $(\phi_\epsilon)^* g_\epsilon=\sqrt{\epsilon} g_1$.
    This follows abstractly from Kronheimer's classification of four-dimensional ALE manifolds, see \cite[Theorem 7.2.3]{Joyc5} for a summary.
    Explicitly, $\phi_\epsilon$ is given by rescaling the fibre direction when viewing $X$ as a fibration over $\mathbb{CP}^1$.
    
    Fix $x_0 \in X \times X$ with $a(x_0) \neq 0$ and define
    \[
        a_\epsilon
        :=
        \frac{(\phi_\epsilon)_* a}{\|{(\phi_\epsilon)_* a}_{L^2(B(\phi_\epsilon(x_0),1)),g_\epsilon}},
    \]
    where for $\delta>0$ we write $B(\phi_\epsilon(x_0),\delta)$ for the ball around $\phi_\epsilon(x_0)$ with radius $\delta$, 
    and $\|{\cdot }_{L^2(B(\phi_\epsilon(x_0),1)),g_\epsilon}$ denotes the $L^2$-norm with respect to $g_\epsilon$.
    The definition of $B(\phi_\epsilon(x_0),\delta)$ depends on $\epsilon$, which is missing from our notation by slight abuse of notation.
    The function $|a|^2$ is subharmonic by \eqref{equation:a-harmonic} and therefore satisfies the mean-value inequality \cite[Theorem 1.2]{LiSc}:
    \begin{align}
        \label{equation:mean-value-inequality}
        |a(x_0)|^2
        \leq 
        c_3 \fint_{B(x_0,\epsilon^{-1})} |a|^2
    \end{align}
    for some constant $c_3>0$, independent of $\epsilon$.
    Here, $\fint_S f dV$ denotes the average value of $f$ on the set $S$.
    Thus
    \begin{align*}
        \|{a_\epsilon}^2_{L^2(B(\phi_\epsilon(x_0),2)),g_\epsilon}
        &=
        \frac{\|{(\phi_\epsilon)_* a}_{L^2(B(\phi_\epsilon(x_0),2)),g_\epsilon}^2}{\|{(\phi_\epsilon)_* a}_{L^2(B(\phi_\epsilon(x_0),1)),g_\epsilon}^2}
        \\
        &=
        \frac{\|{a}_{L^2(B(x_0,2\epsilon^{-1})),g_\epsilon}^2}{\|{a}_{L^2(B(x_0,\epsilon^{-1})),g_\epsilon}^2}
        \\
        &=
        \frac{\fint _{B(x_0,2\epsilon^{-1})} |a|^2}{\fint _{B(x_0,\epsilon^{-1})} |a|^2}
        \cdot
        \frac{\vol(B(x_0,2\epsilon^{-1}))}{\vol(B(x_0,\epsilon^{-1})) }       
        \\
        &\leq
        \frac{c_1}{|a(x_0)|^2 \cdot (1/c_3)} \cdot c_2
        \\
        &\leq
        c_4,
    \end{align*}
    where in the second to last step we used \eqref{equation:mean-value-inequality} to estimate the denominator.
    The numerator is the average value of $|a|^2$, which is bounded by the unweighted $C^0$-norm of $a$, which is bounded by \eqref{equation:c2alpha-bound-on-x-times-x}, because $\beta<0$.

    We therefore have for $y \in B(\phi_\epsilon(x_0),1)$ that
    \begin{align}
        \label{equation:a-epsilon-uniform-bound-on-ball}
        |a_\epsilon(y)|^2
        \leq
        c_5 \fint_{B(y,1)} |a_\epsilon|^2
        \leq
        c_6 \|{a_\epsilon}^2_{L^2(B(\phi_\epsilon(x_0),2)),g_\epsilon}
        \leq
        c_4 \cdot c_6,
    \end{align}
    where in the first step we used the mean value inequality around $y$.
    Furthermore, by the inequality \eqref{equation:c2alpha-bound-on-x-times-x} we have $c>0$ such that
    \[
        \frac{|a(x)|w^{-\beta}(x)}{\|{a}_{C^0(B(x_0,1))}}
        \leq
        c
    \]
    for all $x \in X \times X$.
    Therefore
    \begin{align}
        \label{equation:a-epsilon-global-uniform-bound}
        \begin{split}
        \frac{|a_\epsilon(y)|w^{-\beta}(y)}{\|{a_\epsilon}_{C^0(B(\phi_\epsilon(x_0),1))}}
        &=
        \frac{|a(\phi_\epsilon^{-1}(y))|}{\|{a}_{C^0(\phi_\epsilon^{-1}(B(\phi_\epsilon(x_0),1)))}}
        \cdot w^{-\beta}(y)
        \\
        &
        \leq
        \frac{|a(\phi_\epsilon^{-1}(y))|}{\|{a}_{C^0(B(x_0,1))}}
        \cdot
        w^{-\beta}(\phi_\epsilon^{-1}(y))
        \\
        &\leq
        c
        \end{split}
    \end{align}
    for all $y \in X \times X$, where in the second step we used $w^{-\beta}(y)\leq w^{-\beta}(\phi^{-1}_\epsilon(y))$ which holds if $\beta \leq 0$ and $\epsilon \leq 1$, and $B(x_0,1) \subset \phi^{-1}_\epsilon(B(\phi_\epsilon(x_0),1))$.
    The estimate \eqref{equation:a-epsilon-uniform-bound-on-ball} gives a uniform bound for $a_\epsilon$ on $B(\phi_\epsilon(x_0),1)$.
    Together with the bound \eqref{equation:a-epsilon-global-uniform-bound} this implies a uniform $C^0_{\beta}$-bound for $a_\epsilon$ on $X \times X$, independent of $\epsilon$:
    \begin{align}
        \label{equation:a-epsilon-global-uniform-bound-short-form}
        \|{a_\epsilon}_{C^0_\beta}
        \leq
        c.
    \end{align}
    Let $K \subset U \subset X \times X$, where $K$ is compact and $U$ is open.
    By general elliptic theory, we have the following Schauder estimate for the Laplacian
    \[
        \|{u}_{C^{2,\alpha}(K),g_\epsilon}
        \leq
        c_{K,U,\epsilon}
        \left(
        \|{\Delta u}_{C^{0,\alpha}(U),g_\epsilon}+\|{u}_{C^0(U),g_\epsilon}
        \right),
    \]
    for all $u \in C^{2,\alpha}(\Lambda^2(U))$
    where $c_{K,U,\epsilon}$ is a constant depending on $K,U,$ and $\epsilon$.
    If $U$ has a positive distance from $(S^2 \times X) \cup (X \times S^2)$, then $c_{K,U,\epsilon}$ can be chosen to be independent of $\epsilon$, because the metric on $U$ converges to the flat metric.
    Applying the Schauder estimate to the harmonic element $a_\epsilon$ and using that $|a_\epsilon|$ is bounded on every bounded set, we obtain a bound $\|{a_\epsilon}_{C^{2,\alpha}(K)}<c$ independent of $\epsilon$.
    By the Arzela--Ascoli theorem we can therefore extract a limit, up to passing to a subsequence, $a_\epsilon \rightarrow a^*$ in $K$.
    By exhausting all of $X \times X \setminus (S^2 \times X \cup X \times S^2)$ with compact sets, we can use a diagonal argument to extract a limit $a^* \in (\Omega^0 \oplus \Omega^2_7)((\C^2/\{ \pm 1\} \setminus 0) \times (\C^2/\{ \pm 1\} \setminus 0))$, i.e. $a_\epsilon \rightarrow a^*$ in $C^{2,\alpha/2}$ on compact sets.

    Let $y_0 \in \{0\} \times (\C^2/\{ \pm 1\} \setminus 0) \cup (\C^2/\{ \pm 1\} \setminus 0) \times \{0\}$, i.e., one of the points where $a^*$ is not defined.
    Let $U$ be a bounded open neighbourhood of $y_0$.
    As $a_\epsilon$ is uniformly bounded on $U$, we have that there exists a constant $c>0$ such that $|a^*(y)|<c$ for $y \in U$, where defined.
    Thus $a^* \in L^2((\Lambda^0 \oplus \Lambda^2_7)(U))$, and is therefore smooth on $U$ by elliptic regularity.
    Repeating this argument on all of $\{0\} \times (\C^2/\{ \pm 1\} \setminus 0) \cup (\C^2/\{ \pm 1\} \setminus 0) \times \{0\}$, we find that $a^*$ is smooth on all of $\C^2/\{\pm 1\} \times \C^2/\{\pm 1\}$.

    Denote the blowup map $\pi: X \times X \rightarrow \C^2/\{\pm 1\} \times \C^2/\{\pm 1\}$.
    Using this notation, we find that
    \begin{align}
        \label{equation:fatou-application}
        \begin{split}
        \int_{B(0,1)} |a^*|_{g_0}^2
        &=
        \int_{B(0,1)\setminus (\{0\}\times \C^2/\{\pm 1\} ) \times (\C^2/\{\pm 1\} \times \{0\})} |a^*|_{g_0}^2
        \\
        &=
        \int_{\pi^{-1}(B(0,1)\setminus (\{0\}\times \C^2/\{\pm 1\} ) \times (\C^2/\{\pm 1\} \times \{0\}))} \lim_{\epsilon \rightarrow 0} |a_\epsilon|_{g_\epsilon}^2
        \\
        &=
        \int_{\pi^{-1}(B(0,1)\setminus (\{0\}\times \C^2/\{\pm 1\} ) \times (\C^2/\{\pm 1\} \times \{0\}))} \lim _{\epsilon \rightarrow 0} \sup |a_\epsilon|_{g_\epsilon}^2
        \\
        &\geq
        \lim _{\epsilon \rightarrow 0} \sup
        \int_{\pi^{-1}(B(0,1)\setminus (\{0\}\times \C^2/\{\pm 1\} ) \times (\C^2/\{\pm 1\} \times \{0\}))}
        |a_\epsilon|_{g_\epsilon}^2
        \\
        &=
        1,
        \end{split}
    \end{align}
    where we used the Reverse Fatou lemma in the second to last step.
    Thus $a^* \in C^{2,\alpha}((\Lambda^0 \oplus \Lambda^2_7)(\C^2/\{\pm 1\} \times \C^2/\{\pm 1\}))$ with $\Delta a^*=0$, $\|{a^*}_{C^0_\beta}<c$ by the bound \eqref{equation:a-epsilon-global-uniform-bound-short-form}, and $a^* \neq 0$ by the estimate \eqref{equation:fatou-application}.
    But this is a contradiction to \cref{poposition:trivial-connection-rigid-on-c2-times-c2}.
\end{proof}

In \eqref{equation:def-s-beta-t-R4-times-X} we defined a map $s_{j,\beta,t}$ that takes elements in $\Omega^1(M_t, \Ad E_t)$ to one-forms over (an open set of) the model space $\R^4 \times X$ for $j \in \{1, \dots, 8\}$ and $j \in \{73, \dots, 76\}$.
It remains to define the analogue of $s_{j,\beta,t}$ in the case that the the model space is $X \times X$, i.e. for $j \in \{9, \dots, 72\}$.
To this end, fix $j \in \{9, \dots, 72\}$.
Recall that we glue together trivial bundles around the point singularities in our definition of $E_t$, cf. \eqref{equation:def-of-E-t}, so $E_t|_{\tilde{T}_{j,t}}$ is the trivial bundle and $s_{j,\beta,t}$ will produce Lie algebra valued $1$-forms and not $1$-forms taking values in a potentially non-trivial adjoint bundle as in the case of \eqref{equation:def-s-beta-t-R4-times-X}.

Denote the inclusion of 
$\tilde{T}_{i,t}=
\rho^{-1}(B^4_{t^{-1}\zeta} / \{ \pm 1\}) 
\times 
\rho^{-1}(B^4_{t^{-1}\zeta} / \{ \pm 1\})$ in $M_t$ by $\tilde{\iota}_{i,t}$.
Then, for $\beta \in \mathbb{R}$ define
\begin{align}
\label{equation:definition-s-beta-t}
\begin{split}
 s_{j,\beta,t}:
 \Omega^k(M_t, \Ad E_t) &\rightarrow
 \Omega^k(\rho^{-1}(B^4_{t^{-1}\zeta}/ \{ \pm 1\}) \times \rho^{-1}(B^4_{t^{-1}\zeta}/ \{ \pm 1\}), \mathfrak{g})
 \\
 a & \mapsto
 t^{\beta-k}
 (\tilde{\iota}_{i,t})^* a.
 \end{split}
\end{align}
The following is an analogue of \cref{proposition:s-beta-estimates} and we again omit the proof:

\begin{proposition}
\label{proposition:s-beta-estimates-X-times-X}
 There is a constant $c > 0$ such that
 \begin{align*}
  \frac{1}{c}
  \|{a}
  _{C^{k,\alpha}_{\beta,t}(\tilde{T}_{i,t})}
  \leq
  \|{s_{\beta,t}a}
  _{C^{k,\alpha}_{\beta}(\rho^{-1}(B^4_{t^{-1}\zeta}/ \{ \pm 1\}) \times \rho^{-1}(B^4_{t^{-1}\zeta}/ \{ \pm 1\}))}
  \leq
  c
  \|{a}
  _{C^{k,\alpha}_{\beta,t}(\tilde{T}_{i,t})},
  \\
  \|{
   L_t a - s_{\beta-1,t}^{-1} L s_{\beta,t} a
  }
  _{C^{0,\alpha}_{\beta-1,t}(\tilde{T}_{i,t})}
  \leq
  c t^{3/10}
  \|{a}
  _{C^{0,\alpha}_{\beta,t}(\tilde{T}_{i,t})},
  \\
  \|{
   L_t^* \underline{b} - s_{\beta-1,t}^{-1} L^* s_{\beta,t} \underline{b}
  }
  _{C^{0,\alpha}_{\beta-1,t}(\tilde{T}_{i,t})}
  \leq
  c t^{3/10}
  \|{\underline{b}}
  _{C^{0,\alpha}_{\beta,t}(\tilde{T}_{i,t})},
 \end{align*}
 where $L$ denotes the linearisation of the $Spin(7)$-instanton equation at the product connection over the trivial $G$-bundle over $X \times X$.
\end{proposition}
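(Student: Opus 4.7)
The proof closely follows the blueprint of \cref{proposition:s-beta-estimates}, whose proof is omitted in the text because it directly adapts Walpuski's argument for the $G_2$-case \cite[Proposition 7.7]{Walp1}. The only new geometric input is that the local model is the QALE manifold $X \times X$ rather than $\R^4 \times X$, but the analytic structure of the argument is identical. I begin by unwinding the geometric rescaling on $\tilde{T}_{i,t}$. The resolution map $\pi_{i,t}(x,y)=(t\rho(x),t\rho(y))$ produces a pulled-back flat metric equal to $t^2 g_{X\times X}$ to leading order; combining this with \cref{proposition:spin-7-difference-to-product-structure,proposition:spin-7-structure-holder-estimate} shows that $\tilde{g}_t$ agrees with $t^2 g_{X \times X}$ modulo a $C^{0,\alpha}_{0,t}$-error of size $O(t^{3/10})$. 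The weight satisfies $w_t \sim t\cdot w$ on $\tilde{T}_{i,t}$: near a type (iii) singularity, the singular set $S$ locally contains not only the point itself but also the four-planes $\{0\}\times\R^4/\{\pm 1\}$ and $\R^4/\{\pm 1\}\times\{0\}$ fixed by $\alpha$ and $\beta$, whose distances to $\pi_{i,t}(x,y)$ are $t|\rho(x)|$ and $t|\rho(y)|$ respectively.

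Granted these rescaling relations, the factor $t^{\beta-k}$ in the definition of $s_{j,\beta,t}$ is chosen precisely to cancel the $t$-scalings produced by the metric (contributing $t^{-k}$ for a $k$-form) and by the weight (contributing $t^{-\beta}$), up to the $O(t^{3/10})$ corrections above. A direct pointwise computation then yields the two-sided $L^\infty_\beta$-bound; the H\"older seminorm rescales identically since the constraint $d(x,y)\leq w_t(x,y)$ is transformed into $d(x,y)\leq w(x,y)$ on the model. The full $C^{k,\alpha}_\beta$ estimate follows by induction over covariant derivatives, using that the Levi--Civita connections of $\tilde{g}_t$ and of $t^2 g_{X \times X}$ agree modulo the same $O(t^{3/10})$ correction.

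For the operator comparison, write $L_t^{\Omega_j}$ for the operator built from the product $Spin(7)$-structure $\Omega_j$ and its associated metric. After careful tracking of the scaling factors, and using that $\pi^2_7$ is invariant under joint rescaling of metric and four-form, one verifies that $s_{j,\beta-1,t}^{-1} L\, s_{j,\beta,t}$ equals $L_t^{\Omega_j}$ on the nose---this is exactly what the prefactor $t^{\beta-k}$ is designed to achieve. The difference $L_t-L_t^{\Omega_j}$ then acts pointwise by multiplication by the tensors $\tilde{\Omega}_t-\Omega_j$ (entering $\pi^{2,\tilde{\Omega}_t}_7-\pi^{2,\Omega_j}_7$ via \eqref{eq:spin7-projections}) and $\tilde{g}_t^{-1}-(t^2 g_{X\times X})^{-1}$ (entering $\d^*_{\tilde{g}_t}-\d^*_{t^2 g_{X\times X}}$). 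Both are bounded in $C^{0,\alpha}_{0,t}$ by $c t^{3/10}$ thanks to \cref{proposition:spin-7-structure-holder-estimate,proposition:spin-7-difference-to-product-structure}, and applying the H\"older product rule yields the desired $C^{0,\alpha}_{\beta,t}\to C^{0,\alpha}_{\beta-1,t}$ bound by $c t^{3/10}$. The adjoint $L_t^*$ has identical structural dependence on $\tilde{\Omega}_t$ and $\tilde{g}_t$, so the argument applies verbatim.

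The main obstacle is the careful bookkeeping of the multiple scaling factors---in particular, matching the codifferential's $t^{-2}$ scaling under $\tilde{g}_t\to t^2 g_{X\times X}$ with the form-degree shift in the target of $L$ and the weight shift from $\beta$ to $\beta-1$, simultaneously for both components of the target $\Omega^0\oplus\Omega^2_7$. A related essential point is that the improved $C^{0,\alpha}_{0,t}$ estimate from \cref{proposition:spin-7-structure-holder-estimate} is necessary; the weaker pointwise $C^0$-estimate $O(t^{1/2})$ of Joyce's original construction would control only the $L^\infty$-part of the operator error but not its H\"older seminorm, which is what is needed to close the contraction mapping argument in \cref{section:construction}.
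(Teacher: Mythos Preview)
Your proposal is correct and follows exactly the approach the paper intends: the paper explicitly omits the proof, stating only that it is an analogue of \cref{proposition:s-beta-estimates} (itself deferred to \cite[Proposition 7.7]{Walp1}), and your sketch faithfully carries out that analogy. Your observation that the weight comparison $w_t \sim t\cdot w$ hinges on the singular set $S$ locally containing the two intersecting four-planes (from $\fix(\alpha)$ and $\fix(\beta)$) rather than just the isolated type (iii) point is precisely the new geometric input distinguishing the $X\times X$ model from the $\R^4\times X$ model, and is what forces the QALE weight of \cref{definition:weighted-hoelder-norm-on-product-of-Eguchi-Hansons}.
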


\section{Deforming to genuine solutions}
\label{section:construction}

So far, we have constructed an approximate solution $A_t$ to the $Spin(7)$-instanton equation on $M_t$ in Section \ref{sec:6}, and we have studied the linearisation of the instanton equation on model spaces in Section \ref{sec:linear}.
In this section, we will complete our construction by deforming the approximate solution $A_t$ to a family of genuine $Spin(7)$-instantons on $M_t$ for sufficiently small $t$.

To implement this, we prove an estimate for the linearisation on $M_t$ in Section \ref{subsection:linear-problem-on-Mt}. We then use this estimate to prove the existence of the genuine $Spin(7)$-instantons in Section \ref{subsection:perturbation-step}.

\subsection{Linear problem on $M_t$}
\label{subsection:linear-problem-on-Mt}

We recall the following Schauder estimate:

\begin{proposition}[Proposition 2.86 in \cite{Walp3}]
\label{proposition:schauder-estimate-spin7-linearisation}
 For $\beta \in \mathbb{R}$ there is a constant $c > 0$ such that
 \[
  \|{
   a
  }_{C^{1,\alpha}_{\beta,t}}
  \leq
  c
  \left(
   \|{
    L_{A_t} a
   }_{C^{0,\alpha}_{\beta-1,t}}
   +
   \|{
    a
   }_{L^\infty_{\beta,t}}
  \right)
 \]
 for all $a \in \Omega^1(M_t, \Ad E_t)$.
\end{proposition}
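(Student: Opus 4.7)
This is a standard weighted Schauder estimate for an elliptic first-order operator, and the strategy is to reduce pointwise to the flat interior Schauder estimate via the rescaling of \cref{proposition:near-euclidean-coordinates}. The only work is bookkeeping the weights and verifying that, after rescaling, the coefficients of $L_{A_t}$ are bounded in $C^{0,\alpha}$ uniformly in $t$ and in the base point.

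First, for each $p \in M_t$, set $R := \epsilon w_t(p) = \epsilon(t+r_t(p))$ with $\epsilon$ chosen as in \cref{proposition:near-euclidean-coordinates} so that the rescaled exponential map $s_R : B_1(0) \to B_R(p)$ pulls the metric $R^{-2} s_R^\ast g_t$ into a $C^K$-neighbourhood of the Euclidean metric. Under the same rescaling, the $Spin(7)$-form $R^{-4} s_R^\ast \tilde\Omega_t$ is close to a model flat $Spin(7)$-form: indeed, by \cref{proposition:spin-7-difference-to-product-structure,proposition:spin-7-structure-holder-estimate}, the distance in $C^{0,\alpha}$ between $\tilde\Omega_t$ and the relevant local $Spin(7)$-structure is $O(t^{3/10})$ in the weighted norm $C^{0,\alpha}_{0,t}$, which is exactly the norm invariant under the rescaling $s_R$. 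Hence the principal symbol of the pulled-back operator $\widetilde{L} := R \cdot s_R^\ast L_{A_t} (s_R^{-1})^\ast$ is $C^{0,\alpha}(B_1)$-close to that of the model linearisation and, in particular, uniformly elliptic with uniform $C^{0,\alpha}$-bounds on its coefficients.

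For the zeroth-order part of $\widetilde{L}$, which is built from $A_t$, the required uniform $C^{0,\alpha}(B_1)$ bound on the rescaled connection reduces to the curvature estimate $\|F_{A_t}\|_{C^{0,\alpha}_{-2,t}} \leq c$ from \eqref{equation:approx-instanton-curvature-estimate}; after choosing a Coulomb gauge for $A_t$ on $B_R(p)$, this bound translates under $s_R$ into a uniform-in-$t$ bound on the rescaled connection coefficients. With these uniform coefficient bounds in hand, the standard interior Schauder estimate for an elliptic first-order operator on $B_1 \subset \R^8$ gives
\begin{equation*}
\|\tilde a\|_{C^{1,\alpha}(B_{1/2})} \leq c\bigl(\|\widetilde{L}\tilde a\|_{C^{0,\alpha}(B_1)} + \|\tilde a\|_{L^\infty(B_1)}\bigr), \qquad \tilde a := s_R^\ast a,
\end{equation*}
with $c$ independent of $t$ and $p$.

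Finally, unwinding the rescaling, each side picks up exactly the powers of $R$ encoded in the weights of $C^{k,\alpha}_{\beta,t}$: the weight function $w_t$ is Lipschitz with small constant so that $w_t$ is comparable to $w_t(p)$ throughout $B_R(p)$, which is precisely what makes $w_t(p)^{-\beta}$ a valid substitute for $w_t^{-\beta}$ on that ball. One therefore obtains the local bound
\begin{equation*}
\|a\|_{C^{1,\alpha}_{\beta,t}(B_{R/2}(p))} \leq c\bigl(\|L_{A_t} a\|_{C^{0,\alpha}_{\beta-1,t}(B_R(p))} + \|a\|_{L^\infty_{\beta,t}(B_R(p))}\bigr).
\end{equation*}
Taking the supremum over $p \in M_t$ controls the $L^\infty_{\beta,t}$-part and the derivative $L^\infty_{\beta-1,t}$-part of $\|a\|_{C^{1,\alpha}_{\beta,t}}$; the Hölder seminorm is controlled in the same way by chaining at most $1/\epsilon$ applications of the local estimate between any pair $x,y$ with $d(x,y) \leq w_t(x,y)$, as in the argument leading to \eqref{equation:eta-holder-estimate}. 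The main technical nuisance, rather than a genuine obstacle, is checking that the rescaling constants are uniform in $t$; this is exactly the content of \cref{proposition:near-euclidean-coordinates} combined with the pregluing curvature bound \eqref{equation:approx-instanton-curvature-estimate}.
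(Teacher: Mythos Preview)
The paper does not prove this proposition at all: it is stated as a direct citation of \cite[Proposition 2.86]{Walp3} and then used without further comment. Your proposal is the standard proof of such weighted Schauder estimates by rescaling to a unit ball and applying the flat interior estimate, which is indeed how Walpuski proves it; so your approach is correct and matches the intended source.

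One small remark on the zeroth-order term: invoking a local Coulomb gauge via the curvature bound \eqref{equation:approx-instanton-curvature-estimate} works, but is more machinery than needed here. The connection $A_t$ is explicitly defined in \eqref{equation:approximate-spin7-instanton} as $\theta + \chi a_j$ with $\theta$ flat and $a_j$ satisfying the decay \eqref{equation:pregluing-difference-decay}; these give directly a weighted $C^{0,\alpha}_{-1,t}$-bound on the connection one-form in the obvious trivialisation, which under $s_R$ becomes the required uniform $C^{0,\alpha}(B_1)$-bound on the coefficients. This avoids the (mild) nuisance of ensuring the curvature is below Uhlenbeck's threshold after rescaling.
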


Using this, we prove the key ingredient of our existence theorem, namely, the following uniform estimate for the linearisation of the instanton operator $L_{A_t}$.
Apart from Case 2.B., the proof is a straightforward adaptation of \cite[Proposition 2.83]{Walp3}.

\begin{proposition}
\label{proposition:estimate-for-inverse-of-spin7-linearisation}
\label{proposition:proposition:estimate-for-inverse-of-spin7-linearisation-augmented} % because augmented section was deleted, use double label here
 Assume that the flat connection $\theta$ on $T^8/\Gamma$ and the $Spin(7)$-intantons $A_j$'s over $X$ from \cref{definition:compatible-gluing-data} are infinitesimally rigid.
 Then, given $\beta \in (-2,0)$, there exists a constant $c > 0$ such that for small $t$ we have
 \begin{align}
  \|{
   a
  }_{C^{1,\alpha}_{\beta,t}}
  \leq
  c
   \|{
    L_{A_t} a
   }_{C^{0,\alpha}_{\beta-1,t}}
 \end{align}
 for all $a \in \Omega^1(M_t, \Ad E_t)$.
\end{proposition}

\begin{proof}
 Suppose for a contradiction that the claim is not true.
 We then have a sequence $\{ t_i \} \subset \R$ with $t_i \rightarrow 0$ and an element $a_i \in \Omega^1(M_{t_i}, E_{t_i})$ for each $i \in \mathbb{N}$ so that
 \begin{align}
 \label{equation:L-a-i-to-zero}
  \|{ a_i }_{L^\infty_{\beta,t_i}} \equiv 1
  \text{ and }
  \|{ L_{t_i} a_i}_{C^{0,\alpha}_{\beta-1,t_i}} \rightarrow 0.
 \end{align}
 By \cref{proposition:schauder-estimate-spin7-linearisation} this implies
 \begin{align}
 \label{equation:contradiction-proof-C1alpha-bounded-sequence}
  \|{
   a_i
  }_{C^{1,\alpha}_{\beta,t_i}}
  \leq
  2c.
 \end{align}
 Let $x_i \in M_{t_i}$ such that 
 \begin{align}
 \label{equation:a-i-nonzero-condition}
  w_{t_i}(x_i)^{-\beta} | a_i(x_i) | = 1.
 \end{align}
 Without loss of generality, we can assume that we are in one of the following three cases, and we will arrive at a contradiction in each case.
 
 \textbf{Case 1.}
 Assume that the sequence $\{ x_i \}$ accumulates on the regular part of $T^8/\Gamma$, namely, suppose $\lim r_{t_i}(x_i) > 0$ as $i \rightarrow \infty$.
 
 Let $K \subset ( T^8/\Gamma ) \setminus \fix(\Gamma)$ be a compact subset.
 Using the projection $\pi: M_{t_i} \rightarrow T^8 / \Gamma$ from Section \ref{subsection:approximate-spin7-structures}, we can consider $\pi ^{-1}(K) \subset M_{t_i}$ which is a compact set for all $i \in \mathbb{N}$.
 The restriction $\pi |_{\pi ^{-1}(K)}$ is a diffeomorphism that is not exactly an isometry, but $\pi^* g_t$ converges to the flat metric $g_0$ on $T^8/\Gamma$ in $C^2$ by the definition of $g_t$.
 
 For the bundles, $E_0|_K$ is naturally identified with a subset of $E_t|_{\pi ^{-1}(K)}$ by the construction of $E_t$.
 Under this identification, $A_t|_{\pi ^{-1}(K)} \rightarrow \theta|_K$ as $i \rightarrow \infty$.
 
 Therefore, we obtain from \cref{equation:contradiction-proof-C1alpha-bounded-sequence} that the sequence $a_i|_K$ is uniformly bounded in $C^{1,\alpha}$, measured with respect to the flat metric $g_0$ on $T^8/\Gamma$ and with respect to the flat connection $\theta$ on $E_0$.
 By the Arzela--Ascoli theorem, we can extract a subsequence of $a_i|_K$ converging in $C^{1, \alpha/2}(\Ad E_0|_K)$.
 Exhausting the set $( T^8 /\Gamma ) \setminus \fix(\Gamma)$ with compact sets $(K_i)$, we may use a diagonal sequence argument to extract a limit $a^* \in \Omega^1( (T^8 /\Gamma) \setminus \fix(\Gamma), \Ad E_0)$.
 This limit satisfies
 \begin{align}
 \label{equation:a-star-bound}
  |a^*| \leq
  c \, d_{\Omega_0}(\cdot, \fix ( \Gamma))^\beta
  \text{ and }
  L_\theta a^* =0. 
 \end{align}
 Furthermore, by passing to a subsequence, we find a limit $x^* = \lim_{i \rightarrow \infty} x_i $, viewed as a point in $T^8 /\Gamma$.
 By the assumption of this case, $x_i$ converges away from the singular set, namely, we have $x^* \in (T^8 /\Gamma ) \setminus \fix(\Gamma)$.
 Thus, from \cref{equation:a-i-nonzero-condition}, we obtain 
 \begin{align*}
  a^* 
  \left(
   x^*
  \right)
  >
  0.
 \end{align*}
 On the unit ball $B \subset \R^4$, we have $\int_B |x|^{\beta} \d x < \infty$ for $\beta>-2$ by changing to sphere coordinates.
 Because the singular set of $T^8/\Gamma$ has codimension four, this implies that $d_{\Omega_0}(\cdot, \fix ( \Gamma))^\beta$ is in $L^2(T^8/\Gamma)$.
 Thus, by \cref{equation:a-star-bound}, the limit $a^*$ is in $L^2$.
 Therefore, it is a well-defined distribution acting on $L^2$-sections of $\Lambda^1_{T^8/\Gamma} \otimes \Ad E_0 $.
 Thus, $a^*$ is a solution to the equation $L_\theta \, a^*=0$ in the distributional sense on all of $T^8/\Gamma$.
 Here, a section over the orbifold $T^8/\Gamma$ means a section over $T^8$ that is $\Gamma$-invariant.
 Therefore, by elliptic regularity (cf. \cite[Theorem 6.33]{Foll1}), we have that $a^* \in \Omega^1(T^8/\Gamma, \Ad E_0)$ is a smooth, non-zero element in the kernel of $L_\theta$.
 This is a contradiction to the assumption that $\theta$ is infinitesimally rigid.

 \textbf{Case 2.}
Assume that the sequence $\{ x_i \}$ accumulates on one of the ALE spaces, namely, suppose $\lim r_{t_i}(x_i)/t_i < \infty$ as $i \rightarrow \infty$.
 
 Without loss of generality, we can assume that $x_i \in \tilde{T}_{j,t_i}$ for one fixed $j \in I$ (cf. Section \ref{section:joyces-construction-of-spin7-mf}) and all $i \in \mathbb{N}$.
 Define $\tilde{a}_i := s_{j,\beta,t_i} a_i$ and choose $\tilde{x}_i \in \iota_{j,t}^{-1}(x_i)$.
 The $\tilde{a}_i$ are then $1$-forms over (an open subset of) one of the model spaces $\R^4 \times X$ or $X \times X$, 
 the map $s_{j,\beta,t_i}$ is defined in \eqref{equation:definition-s-beta-t} for $j \in \{1, \dots, 8, 73, \dots, 76\}$, and it is defined in \eqref{equation:def-s-beta-t-R4-times-X} for $j \in \{9, \dots, 72\}$.
 By \cref{proposition:s-beta-estimates,proposition:s-beta-estimates-X-times-X}, and the bound \eqref{equation:contradiction-proof-C1alpha-bounded-sequence} we have 
 \begin{align}
 \label{equation:ALE-case-a-i-tilde-estimate}
  \|{
   \tilde{a}_i
  }_{C^{1,\alpha}_\beta}
  \leq
  c.
 \end{align}
 We have \eqref{equation:a-i-nonzero-condition} for the glued metric on $M_{t_i}$.
 This implies
 \begin{align}
 \label{equation:ALE-case-a-i-tilde-nonzero-estimate}
  w(\tilde{x}_i)
  |\tilde{a}_i(\tilde{x}_i)|
  >
  \frac{1}{2}
 \end{align}
 for the product metric on the model space.
 Because the two metrics are close, but not necessarily equal, \eqref{equation:a-i-nonzero-condition} becomes a lower bound.
 
 \textbf{Case 2.A.}
 Suppose $j \in \{1, \dots, 8, 73, \dots, 76\}$. 
 Without loss of generality, we can assume that $\tilde{x}^*:=\lim_{i \rightarrow \infty} \tilde{x}_i$ exists.
 This is because 
 the $X$-component of $\tilde{x}_i \in \R^4 \times X$ is bounded by the assumption that $\lim r_{t_i}(x_i)/t_i < \infty$, thus, it has a convergent subsequence.
 The $\R^4$-component need not be bounded.
 However, because all operators and norms involved are translation invariant, we can translate $\tilde{a}_i$ in the $\R^4$-direction without affecting our estimates, and can therefore assume the $\R^4$-component of $\tilde{x}_i$ to be zero. 
 
 By the second point of \eqref{equation:L-a-i-to-zero} and the second estimate in \cref{proposition:s-beta-estimates,proposition:s-beta-estimates-X-times-X}, we have 
 \begin{align}
 \label{equation:ALE-case-L-a-i-tilde-goes-to-zero}
  \|{
   L_{p_2^*A_j} \tilde{a}_i
  }_{C^{0,\alpha}_{\beta-1}}
  \rightarrow 0.
 \end{align}
 Using a diagonal argument, we obtain a limit $a^* \in \Omega^1(\R^4 \times X, p_2^*\Ad E_j)$ satisfying
 \begin{align*}
  \|{a^*}_{C^{1,\alpha/2}_{\beta}} \leq
  c, \, 
  %\text{ and }
  L_{p_2^*A_j} a^* =0, 
  \text{ and }
  a^* 
  \left(
   \tilde{x}^*
  \right)
  >
  0, 
 \end{align*}
 where we used the bound \eqref{equation:ALE-case-a-i-tilde-estimate} for the first inequality,
 \eqref{equation:ALE-case-L-a-i-tilde-goes-to-zero} for the equality,
 and the inequality \eqref{equation:ALE-case-a-i-tilde-nonzero-estimate} for the last one.
 By \cref{lemma:kernel-of-pullback-instanton}, $a^*$ is the pullback of an element in the $L^2$-kernel of $\delta_{A_j}$ over $X$.
 Since $A_j$ is chosen to be rigid, this contradicts the above $a^* \left(  \tilde{x}^* \right) >0$. 
 
 \textbf{Case 2.B. }
 Suppose $j \in \{9, \dots, 72\}$.
 The sequence $\tilde{x}_i$ has a converging subsequence because of the assumption $\lim r_{t_i}(x_i)/t_i < \infty$.
 Thus we can assume without loss of generality that the limit $\tilde{x}^*:=\lim_{i \rightarrow \infty} \tilde{x}_i$ exists.
 Analogously to Case 2.A we obtain an element $a^* \in \Omega^1(X \times X, \mathfrak{g})$ satisfying
 \begin{align}
 \label{equation:linearisation-case-2-b-limit-properties}
  \|{a^*}_{C^{1,\alpha/2}_{\beta}} \leq
  c, \,
 % \text{ and }
  L a^* =0, 
  \text{ and }
  a^* 
  \left(
   \tilde{x}^*
  \right)
  >
  0, 
 \end{align}
 where $L$ is the linearisation of the $Spin(7)$-instanton equation at the product connection.
 This is a contradiction to \cref{proposition:trivial-connectionon-X-X-unobstructed-rigid}.
 
 \textbf{Case 3.}
Assume that the sequence $\{ x_i \}$ accumulates on one of the necks, namely, suppose $\lim r_{t_i}(x_i) = 0$ and $\lim r_{t_i}(x_i)/t_i = \infty$.
 
 Again, without loss of generality, we can assume that $x_i \in \tilde{T}_{j,t_i}$ for one fixed $j \in I$ and all $i \in \mathbb{N}$. 
 We define $\tilde{a}_i := s_{j,\beta,t_i} a_i$ and choose $\tilde{x}_i \in \iota_{j,t}^{-1}(x_i)$.
 In this case, we cannot assume that $\lim_{i \rightarrow \infty} \tilde{x}_i$ exists, because $\tilde{x}_i$ tends to infinity.
 We will therefore rescale it first.

 Fix a sequence $R_i \rightarrow \infty$ such that $R_i/(r_{t_i}(x_i)/t_i) \rightarrow 0$.
  
 \textbf{Case 3.A. }
 Suppose $j \in \{1, \dots, 8, 73, \dots, 76\}$.
 In this case, $\tilde{a}_i$ is defined over 
 $\R^4 \times \rho^{-1}(B^4_{t^{-1}\zeta})
 \subset
 \R^4 \times X$.
 Define the restriction
 \[
  \tilde{\tilde{a}}_i
  :=
  \tilde{a}_i|
  _{\R^4 \times \left(\rho^{-1} \left( B^4_{t^{-1}\zeta} \setminus B^4_{R_i} \right) \right)}.
 \]
 Since $(\id, \rho): \R^4 \times \left(\rho^{-1} \left( B^4_{t^{-1}\zeta} \setminus B^4_{R_i} \right) \right) \rightarrow \R^4 \times \left( B^4_{t^{-1}\zeta} \setminus B^4_{R_i} \right)$ is a diffeomorphism, we can view $\tilde{\tilde{a}}_i$ as a tensor on a subset of $\R^4 \times \C^2/\{ \pm 1\}$ rather than a tensor on a subset of $\R^4 \times X$.
 By pulling back under the quotient map $\R^4 \times \C^2 \rightarrow \R^4 \times \C^2/\{ \pm 1\}$, we obtain a tensor on
 $\R^4 \times \left( B^4_{t^{-1}\zeta} \setminus B^4_{R_i} \right)
 \subset
 \R^4 \times \C^2$, that we again denote by the same symbol, namely, $\tilde{\tilde{a}}_i$.
 We now define the rescaling by 
 \[
  \tilde{\tilde{\tilde{a}}}_i
  :=
  (\cdot r_{t_i}(x_i)/t_i)^*\tilde{\tilde{a}}_i
  \cdot
  (r_{t_i}(x_i)/t_i)^{-1-\beta}.  
 \]
 This is defined on $\R^4 \times \left( B^4_{\zeta/r_{t_i}(x_i)} \setminus B^4_{R_i/(r_{t_i}(x_i)/t_i)} \right)$ and satisfies
 \begin{align}
 \label{equation:linearisation-case-3-a-rescaling-analysis}
 \begin{split}
  |\tilde{\tilde{\tilde{a}}}_i \cdot r^{-\beta}|_{g_{\R^4} \oplus g_{\C^2}}
  &=
  |(\cdot r_{t_i}(x_i)/t_i)^*\tilde{\tilde{a}}_i
  \cdot
  (r_{t_i}(x_i)/t_i)^{-1-\beta}
  \cdot r^{-\beta}|_{g_{\R^4} \oplus g_{\C^2}}
  \\
  &=
  \left|
  (\cdot r_{t_i}(x_i)/t_i)^*
  \left[
  \tilde{\tilde{a}}_i
  \cdot
  r^{-\beta}
  \right]
  (r_{t_i}(x_i)/t_i)^{-1}
  \right|_{g_{\R^4} \oplus g_{\C^2}}
  \\
  &=
  \left|
  \tilde{\tilde{a}}_i
  \cdot
  r^{-\beta}
  \right|_{g_{\R^4} \oplus g_{\C^2}},
 \end{split}
 \end{align}
 where the last step follows from the behaviour of covariant tensors under rescaling.
 Because the sets $\R^4 \times \left( B^4_{\zeta/r_{t_i}(x_i)} \setminus B^4_{R_i/(r_{t_i}(x_i)/t_i)} \right)$ exhaust $\R^4 \times (\C^2 \setminus \{0\})$, we can again use a diagonal sequence argument to extract a limit $a^*$ on $\R^4 \times (\C^2 \setminus \{0\})$ of the sequence. 
 As in Case 2, we can assume without loss of generality that the sequence $\tilde{x}_i/(r_{t_i}(x_i)/t_i)$ converges to a point $x^* \in \R^4 \times \C^2$.
 Taking \eqref{equation:L-a-i-to-zero}, \eqref{equation:contradiction-proof-C1alpha-bounded-sequence}, and \eqref{equation:linearisation-case-3-a-rescaling-analysis} together, we obtain
 \begin{align}
 \label{equation:linearisation-3-a-limit-properties}
  |a^*| \leq
  c |\pi_2|^\beta, \,
%  \text{ and }
  L_{A_j^\infty} a^* =0, 
  \text{ and }
  a^* 
  \left(
   x^*
  \right)
  |\pi_2|(x^*)^{-\beta}
  >
  0.
 \end{align}
 Because $\beta> -3$, this $a^*$ defines a distribution on all of $\R^4 \times \C^2$, and it is smooth by elliptic regularity.
 We now check that $a^*$ is uniformly bounded. 
 This follows from the first estimate in \eqref{equation:linearisation-3-a-limit-properties} on $\R^4 \times (\C^2 \setminus \{ 0 \})$ as follows:
 for $z \in \R^4$, the first estimate in \eqref{equation:linearisation-3-a-limit-properties} gives an estimate for $a^*$ on the ball $B_2(z,0) \subset \R^4 \times \C^2$ in an $L^p$-norm for some $p \in (1, \infty)$ independent of $z$.
 By elliptic regularity, this gives $L^p_k$-estimates independent for all $k \in \mathbb{N}$, still independent of $z$.
 By the Sobolev embedding theorems, we therefore obtain an $L^\infty$-estimate for $a^*$ on $B_2(z,0)$.
 This estimate is independent of $z$, and therefore holds on all of $\R^4 \times B_2(0) \subset \R^4 \times \C^2$.
 Together with the estimate for $a^*$ on $\R^4 \times (\C^2 \setminus \{ 0 \})$  we find that $a^*$ is uniformly bounded.

 It follows from \eqref{equation:LL*-L*L-on-R4-times-X} and the second point of  \eqref{equation:linearisation-3-a-limit-properties} that $L^*_{A_j^\infty}L_{A_j^\infty}a^*=(\Delta_{\R^4} + \Delta_{\C^2})a^*=0$.
 By \cref{lemma:constant-in-R-n-direction} this implies that $a^*$ is constant in the $\R^4$-direction.
 Thus, restricted to a single fibre $\{z\} \times \C^2$ we have that $a^*$ consists of several harmonic real-valued functions that are bounded and decay at infinity.
 Then, by Liouville's theorem, we obtain $a^*=0$. However, this contradicts the third point of \eqref{equation:linearisation-3-a-limit-properties}.
 
 \textbf{Case 3.B. }
 Suppose $j \in \{9, \dots, 72\}$.
 In this case, $\tilde{a}_i$ is defined over 
 $\rho^{-1}(B^4_{t^{-1}\zeta}) \times \rho^{-1}(B^4_{t^{-1}\zeta})
 \subset
 X \times X$.
 Similar to Case 3.A,  we can extract a limit $a^*$ over $(\C^2 \setminus \{0\}) \times (\C^2 \setminus \{0\})$ which gives a contradiction to \cref{poposition:trivial-connection-rigid-on-c2-times-c2}.
\end{proof}

We also have the following:

\begin{proposition}
\label{proposition:glued-connection-unobstructed}
\label{proposition:glued-connection-unobstructed-augmented} % because the augmented section was deleted, use this double label here
 Assume that the flat connection $\theta$ on $T^8/\Gamma$ and the ASD intantons $A_i$ over $X$ from \cref{definition:compatible-gluing-data} are infinitesimally rigid.  Suppose furthermore that $\theta$ is irreducible and unobstructed.
Then, given $\beta \in (-2,0)$, there exists a constant $c > 0$ such that for small $t$ we have
 \begin{align}
  \|{
   \underline{b}
  }_{C^{1,\alpha}_{\beta,t}}
  \leq
  c
   \|{
    L_{A_t}^* (\underline{b})
   }_{C^{0,\alpha}_{\beta-1,t}}
 \end{align}
 for all $\underline{b} \in \Omega^0(M_t, \Ad E_t) \oplus \Omega^2_7(M_t, \Ad E_t)$.
\end{proposition}

\begin{proof}
The proof is analogous to the proof of \cref{proposition:estimate-for-inverse-of-spin7-linearisation}.
First, we have the following analogue of \cref{proposition:schauder-estimate-spin7-linearisation} that can be proved in the same way:
\begin{align}
\label{equation:linearisation-adjoint-schauder-estimate}
  \|{
   \underline{b}
  }_{C^{1,\alpha}_{\beta,t}}
  \leq
  c
  \left(
   \|{
    L_{A_t}^* \underline{b}
   }_{C^{0,\alpha}_{\beta-1,t}}
   +
   \|{
    \underline{b}
   }_{L^\infty_{\beta,t}}
  \right).
\end{align}
Now assume that the proposition does not hold.
As in the proof of \cref{proposition:estimate-for-inverse-of-spin7-linearisation}, we obtain sequences $t_i \rightarrow 0$, $x_i \in M_{t_i}$ and a sequence $\underline{b}_i \in \Omega^0(M_{t_i}, \Ad E_{t_i}) \oplus \Omega^2_7(M_{t_i}, \Ad E_{t_i})$ such that
\begin{align}
 \label{equation:L-b-i-to-zero}
  \|{
   \underline{b}_i
  }_{C^{1,\alpha}_{\beta,t_i}}
  \leq
  2c, \, 
  %\text{ and }
  \|{ L_{A_{t_i}}^* \underline{b}_i}_{C^{0,\alpha}_{\beta-1,t_i}} \rightarrow 0, 
  \text{ and }
  w_{t_i}(x_i)^{-\beta} | \underline{b}_i(x_i) | = 1.
\end{align}
Again, we consider the following three cases and arrive at a contradiction in each case:

\textbf{Case 1.}
Assume that the sequence $\{ x_i \}$ accumulates on the regular part of $T^8/\Gamma$, namely, suppose $\lim r_{t_i}(x_i) > 0$.

Using a diagonal argument as in Case 1 in the proof of \cref{proposition:estimate-for-inverse-of-spin7-linearisation}, we can extract a limit
$\underline{b}^* \in  \Omega^0(T^8/\Gamma, \Ad E_0) \oplus \Omega^2_7(T^8/\Gamma, \Ad E_0)$ satisfying
\begin{align}
 L^*_\theta(\underline{b}^*)=0
 \text{ and }
 \underline{b}^* \neq 0,
\end{align}
which contradicts the fact that $\theta$ is irreducible and unobstructed.

\textbf{Case 2.}
 Assume that the sequence $\{ x_i \}$ accumulates on one of the ALE spaces, namely, suppose $\lim r_{t_i}(x_i)/t_i < \infty$.
 
 Without loss of generality, we can assume that $x_i \in \tilde{T}_{j,t_i}$ for one fixed $j \in I$ (cf. Section \ref{section:joyces-construction-of-spin7-mf}) and all $i \in \mathbb{N}$.
 
\textbf{Case 2.A. }
Suppose $j \in \{1, \dots, 8, 73, \dots, 76\}$. 
As in Case 2.A in the proof of \cref{proposition:estimate-for-inverse-of-spin7-linearisation}, we can extract a limit
$\underline{b}^* \in  \Omega^0(\R^4 \times X, p_2^* \Ad E_j) \oplus \Omega^2_7(\R^4 \times X, p_2^* \Ad E_j)$ satisfying
\begin{align}
\label{equation:A_t-unobstructed-proof-case-2A-limit}
\|{
 \underline{b}^*
}_{C^{1,\alpha/2}_\beta}
\leq
c, \, 
%\text{ and }
L^*_{p_2^* A_j} \underline{b}^*=0, 
\text{ and }
\underline{b}^* \neq 0.
\end{align}
By \cref{lemma:constant-in-R-n-direction}, $\underline{b}^*$ is constant in the $\R^4$-direction, namely, the pullback of an element in $\Omega^0 \oplus \Omega^2_7(X, \Ad E_j)$ that we denote by the same symbol.
By \eqref{equation:LL*-L*L-on-R4-times-X}, $\underline{b}^*$ can be written as the sum of an element in the kernel of $\delta_{A_j}$ and an element in the kernel of $\delta_{A_j}^*$.
By the second point of \cref{proposition:ale-asd-kernel-cokernel}, the element in the kernel of $\delta_{A_j}^*$ is zero.
On the other hand, we assume that $A_j$ is infinitesimally rigid, therefore the component in the kernel of $\delta_{A_j}$ is also zero.
This contradicts $\underline{b}^* \neq 0$ from \eqref{equation:A_t-unobstructed-proof-case-2A-limit}.

\textbf{Case 2.B. }
Suppose $j \in \{9, \dots, 72\}$.
As in Case 2.B in the proof of \cref{proposition:estimate-for-inverse-of-spin7-linearisation}, we can extract a limit
$\underline{b}^* \in  \Omega^0(X \times X, \mathfrak{g}) \oplus \Omega^2_7(X \times X, \mathfrak{g})$ satisfying
\begin{align}
\label{equation:A_t-unobstructed-proof-case-2B-limit}
\|{
 \underline{b}^*
}_{C^{1,\alpha/2}_\beta}
\leq
c, \, 
%\text{ and }
\d \xi + \d^* b=0 , 
\text{ and }
\underline{b}^* \neq 0.
\end{align}
This is a contradiction to \cref{proposition:trivial-connectionon-X-X-unobstructed-rigid}.

\textbf{Case 3.}
Assume the sequence $(x_i)$ accumulates on one of the necks, namely, suppose $\lim r_{t_i}(x_i) = 0$ and $\lim r_{t_i}(x_i)/t_i = \infty$.

\textbf{Case 3.A.} 
As in Case 3.A. in the proof of \cref{proposition:estimate-for-inverse-of-spin7-linearisation}, we can extract a limit
$\underline{b}^*=(\xi,b) \in  \Omega^0(\R^4 \times (\R^4 \setminus \{0\}), \mathfrak{g}) \oplus \Omega^2_7(\R^4 \times (\R^4 \setminus \{0\}), \mathfrak{g})$ satisfying
\begin{align}
 \label{equation:linearisation-3-a-limit-properties2}
  |a^*| \leq
  c |\pi_2|^\beta, \,
%  \text{ and }
  L_{A_j^\infty}^* a^* =0, 
  \text{ and }
  a^* 
  \left(
   x^*
  \right)
  |\pi_2|(x^*)^{-\beta}
  >
  0.
 \end{align}
 Using \eqref{equation:LL*-L*L-on-R4-times-X} and Lemma \ref{lemma:constant-in-R-n-direction} we conclude like in the proof of \cref{proposition:estimate-for-inverse-of-spin7-linearisation} that $a^*$ is the pullback of a harmonic function with decay on $\R^4$, which contradicts the last point of \eqref{equation:linearisation-3-a-limit-properties2}.

\textbf{Case 3.B.}
As in Case 3.B. in the proof of \cref{proposition:estimate-for-inverse-of-spin7-linearisation}, we can extract a limit
$\underline{b}^*=(\xi,b) \in  \Omega^0((\R^4 \setminus \{0\}) \times (\R^4 \setminus \{0\}), \mathfrak{g}) \oplus \Omega^2_7((\R^4 \setminus \{0\}) \times (\R^4 \setminus \{0\}), \mathfrak{g})$ satisfying
\begin{align}
\label{equation:A_t-unobstructed-proof-case-3-limit}
\|{
 \underline{b}^*
}_{C^{1,\alpha/2}_\beta}
\leq
c, \, 
%\text{ and }
\d \xi + \d^* b=0, \, 
\text{ and }
\underline{b}^* \neq 0.
\end{align}
This contradicts \cref{poposition:trivial-connection-rigid-on-c2-times-c2}.
\end{proof}

\begin{remark}
\label{remark:reducible-theta}
We assumed in \cref{proposition:glued-connection-unobstructed} that the ambient connection $\theta$ is irreducible.
This assumption can be removed as in \cite[pp.68--69]{Walp3}, though one can no longer guarantee that the resulting $Spin(7)$-instanton is irreducible, and we will not use this generalisation for our examples in Section \ref{section:examples}.
\end{remark}

\subsection{The perturbation step}
\label{subsection:perturbation-step}

With all of this in place, we can prove the desired gluing result:

\begin{theorem}
 \label{theorem:main-gluing-result}
 Let $\theta$ be an unobstructed flat connection over $T^8/\Gamma$ and $A_j$ infinitesimally rigid ASD-instantons over Eguchi--Hanson space $X$ for $j \in \{73,74,75,76\}$ that are compatible in the sense of \cref{definition:compatible-gluing-data}.
 Let $A_t$ be the connection on the bundle $E_t$ over $M_t$ constructed in Section \ref{sec:6}.
 Then there exists $c>0$ such that for small $t$ there exists $a_t \in \Omega^1(M_t, \Ad E_t)$ such that $A_t+a_t$ is a $Spin(7)$-instanton on $E_t$.
 Moreover,
 $\|{a_t}_{C^{1,\alpha}_{-1,t}} \leq c t^{3/10}$ and $A_t+a_t$ is smooth.
 \label{th:analytic_construction}
\end{theorem}

\begin{proof}
 We aim to find a solution $a \in C^{1,\alpha}_{-1;t}(\Lambda^1(M, \Ad E_t))$ of the equation
 \begin{align}
 \label{equation:instanton-equation-for-deformation}
  \pi^2_7(F_{A_t+a})=0.
 \end{align}
 This equation decomposes into three summands.
 If we denote the term that is quadratic in $a$ by $Q_t$, i.e. $Q_t(a):= \pi^2_7([a,a])$, and denote the term that is constant in $a$ (that is the pre-gluing error) by $e_t$, i.e. $e_t:= \pi^2_7(F_{A_t})$, then it will suffice to solve the equation:
 \begin{align}
 \label{equation:instanton-equation-solved-for-linear-part}
  \tilde{L}_{A_t}(\underline{a}) = (0, -e_t - Q_t(a))
 \end{align}
 for $\underline{a}=(a, \xi) \in \Omega^1(M, \Ad E_t) \oplus \mathcal{H}^0_{A_t}$.
 In this case, $A_t+a$ will turn out to be a solution to \cref{equation:instanton-equation-for-deformation}.
 
 \Cref{equation:instanton-equation-solved-for-linear-part} is a \emph{non-linear} equation in $\underline{a}=(a, \xi)$.
 In order to solve it, we will repeatedly solve its linearisation and a limit of these solutions will be a solution to \cref{equation:instanton-equation-solved-for-linear-part}.
 
 To this end, define $\underline{a}_0 := 0$ and for $k > 0$ let $\underline{a}_k = (a_k, \xi_k) \in \Omega^1(M, \Ad E_t) \oplus (1-\chi_t) \cdot \mathcal{H}^0_\theta$ be such that 
 \begin{align}
 \label{equation:iterative-instanton-equation}
  \tilde{L}_{A_t}(\underline{a}_k) = (0, -e_t - Q_t(a_{k-1})),
 \end{align}
 where $\underline{a}_k$ exists because $\tilde{L}_{A_t}$ is surjective by \cref{proposition:glued-connection-unobstructed-augmented}.

 For the moment, denote by $c'$ the constant from \cref{proposition:proposition:estimate-for-inverse-of-spin7-linearisation-augmented}, namely, 
 $\|{
   \underline{a}
  }_{C^{1,\alpha}_{\beta,t}}
  \leq
  c'
   \|{
    \tilde{L}_{A_t} \underline{a}
   }_{C^{0,\alpha}_{\beta-1,t}}$,
 and denote by $c''$ the constant from \cref{proposition:pregluing-estimate}, that is, 
 $\|{ e_t }_{C^{0,\alpha}_{-2;t}} \leq c''t^{3/10}$.
 
 \textbf{Claim:}
 if $t$ is small, then $\|{ \underline{a}_k }_{C^{1,\alpha}_{-1;t}} \leq 2c'c'' t^{3/10}$ for all $k \geq 0$.
 
 %\textbf{Proof of claim:}
 \begin{proof} We prove the claim by induction.
 By definition, it is true for $k = 0$.
 
 If $k > 0$, then:
 \begin{align*}
  \|{ \underline{a}_k }_{C^{1,\alpha}_{-1;t}} &\leq
  c'
  \|{ \tilde{L}_{A_t} \underline{a}_k }_{C^{0,\alpha}_{-2;t}}
  \\
  &=
  c' 
  \|{ e_t + Q_t(\underline{a}_{k-1}) }_{C^{0,\alpha}_{-2;t}}
  \\
  &\leq
  c'
  \|{ e_t }_{C^{0,\alpha}_{-2;t}}
  +
  c'
  \|{ Q_t(\underline{a}_{k-1}) }_{C^{0,\alpha}_{-2;t}}
  \\
  &\leq
  c' c'' t^{3/10}
  +
  c'
  \|{ \underline{a}_{k-1} }_{C^{0,\alpha}_{-1;t}}^2
  \\
  &\leq
  2c' c'' t^{3/10}.
 \end{align*}
 In the first step, we used \cref{proposition:glued-connection-unobstructed-augmented}, in the last step we used the induction hypothesis together with the fact that $t$ is sufficiently small, in particular, it is small enough so that $c'(2c'c'' t^{3/10})^2 \leq c' c''t^{3/10}$.
 This proves the intermediate claim.
 \end{proof}

 Therefore, $\underline{a}_k$ is a bounded sequence, thus, it has a convergent subsequence by the Arzela--Ascoli theorem.
 Denote its limit by $\underline{a}=(a, \xi) \in \Omega^1(M, \Ad E_t) \oplus (1-\chi_t) \cdot \mathcal{H}^0_\theta$.
 By taking the limit of \cref{equation:iterative-instanton-equation}, we see that $\underline{a}$ satisfies \cref{equation:instanton-equation-solved-for-linear-part}.
 Thus, 
 \begin{align*}
  e_t+\left( L_{A_t}(a) \right) +Q_t(a)
  &=
  0, 
 \end{align*}
 and therefore $A_t+a$ is a $Spin(7)$-instanton.
 It remains to show smoothness of $A_t+a$.
 As $A_t$ is smooth by definition, we have to show that $a$ is smooth.
 This is done by \emph{elliptic bootstrapping}:
 assume that $a \in C^{k,\alpha}$ for some $k \geq 1$.
 Then, by general elliptic theory there exists a constant $M>0$ (depending on $t$ and $k$) such that
 \begin{align*}
    \|{
        a
    }_{C^{k+1,\alpha}}
    &\leq
    M
    \|{
        L_{A_t} a
    }_{C^{k,\alpha}}
    +
    \|{
        a
    }_{C^0}
    \\
    &=
    M
    \|{
        e_t+Q_t(a)
    }_{C^{k,\alpha}}
    +
    \|{
        a
    }_{C^0}.
 \end{align*}
 Because $a \in C^{k,\alpha}$ we have that the right-hand side is finite, which implies that the left-hand side is finite, so $a \in C^{k+1,\alpha}$.
 Induction shows that $a \in C^\infty$.
\end{proof}

\section{Examples}
\label{section:examples}

In Section \ref{subsection:approximate-solutions}, we constructed approximate $Spin(7)$-instantons on $M_t$, using compatible gluing data.
Roughly speaking, compatible gluing data consist of a flat connection $\theta$ on the orbifold $T^8/\Gamma$ and of rigid ASD instantons on Eguchi--Hanson space $X$, satisfying some compatibility conditions.
In Section \ref{section:construction}, we proved that, given such an approximate solution, one obtains a nearby exact solution to the $Spin(7)$-instanton equation.

We have not yet shown that there exist examples of compatible gluing data.
In this section, we will exhibit many examples of these data, leading to lots of new examples of $Spin(7)$-instantons.
It will turn out that given a rigid flat connection on the orbifold $T^8/\Gamma$, there is an easy way to extend it to compatible gluing data.
Therefore, we review flat connections on the orbifold $T^8/\Gamma$ and state how to produce compatible gluing data in the sense of  \cref{definition:compatible-gluing-data} out of suitable flat connections on the orbifold $T^8/\Gamma$ in Section \ref{subsection:flat-connections-on-orbifolds}.
In \Cref{subsection:SO3-examples}, we present explicit examples with structure group $SO(3)$, in \Cref{subsection:other-structure-groups}, we comment on the situation for all the other cases among $SO(n)$'s, providing examples with structure groups $SO(4)$, $SO(5)$, $SO(7)$ and $SO(8)$ and explaining why other cases are not possible

\subsection{Flat connections on orbifolds}
\label{subsection:flat-connections-on-orbifolds}

The following is an adaptation of \cite[Proposition 9.2]{Walp1} to the $Spin(7)$-setting:

\begin{lemma}
\label{lemma:rep-theory-criterion-for-rigidity}
 A flat connection $\theta$ on a $G$-bundle $E_0$ over a flat $Spin(7)$-orbifold $T^8/\Gamma$ corresponding to a representation $\rho: \pi_1^{\rm{orb}}(T^8/\Gamma) \rightarrow G$ is irreducible if and only if the induced representation of $\pi_1^{\rm{orb}}(T^8/\Gamma)$ on the Lie algebra $\mathfrak{g}$ of $G$ has no non-zero fixed vectors, where the action is given by the adjoint representation. Similarly, it is infinitesimally rigid if and only if the induced representation of $\pi_1^{\rm{orb}}(T^8/\Gamma)$ on $\R^8 \tensor \mathfrak{g}$ has no non-zero fixed vectors; and it is unobstructed if and only if the induced representation of $\pi_1^{\rm{orb}}(T^8/\Gamma)$ on $\Lambda^2_7(\R^8) \tensor \mathfrak{g}$ has no non-zero fixed vectors, where $\pi_1^{\rm{orb}}(T^8/\Gamma)$ acts on $\R^8$ and $\Lambda^2_7(\R^8)$ via the holonomy representation of the Levi-Civita connection. 
\end{lemma}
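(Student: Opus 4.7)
The plan is to pull everything back to the universal cover $T^8$, where the flat bundle $E_0$ becomes trivialized, and then identify each of the cohomologies $\mathcal{H}^0_\theta$, $\mathcal{H}^1_\theta$, $\mathcal{H}^2_{7,\theta}$ with the $\Gamma$-fixed subspace of the corresponding representation. The key point is that on a flat torus, with respect to the trivial connection, Hodge theory is especially simple: harmonic sections of a trivial vector bundle are exactly the constant sections.

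More precisely, let $p: T^8 \to T^8/\Gamma$ denote the quotient map. Since $\theta$ has holonomy representation $\rho: \pi_1^{\text{orb}}(T^8/\Gamma) \to G$, its pullback $p^*\theta$ is gauge-equivalent to the trivial connection on the trivial bundle $T^8 \times G \to T^8$. Under this trivialization, a section (resp.\ $k$-form with values in $\Ad E_0$) on the orbifold corresponds to a $\Gamma$-equivariant $\mathfrak{g}$-valued function (resp.\ $k$-form) on $T^8$, where $\Gamma$ acts diagonally: on the base via deck transformations, on the form part through the holonomy representation of the Levi-Civita connection on $\Lambda^k(\R^8)$, and on $\mathfrak{g}$ via $\Ad \circ \rho$. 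I would write out these three identifications carefully for $k=0$, $k=1$, and for the subbundle $\Lambda^2_7 \otimes \Ad E_0$ (noting that the splitting $\Lambda^2 = \Lambda^2_{21} \oplus \Lambda^2_7$ is preserved by the $Spin(7)$-holonomy action and hence descends to the orbifold).

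Next, I would apply the standard Weitzenböck/Hodge argument on the flat torus. Since $p^*\theta$ is gauge-equivalent to the trivial connection and the metric on $T^8$ is flat, the Laplacian on $\mathfrak{g}$-valued $k$-forms on $T^8$ splits as $\Delta \otimes \id_{\mathfrak{g}}$ acting componentwise on $\Omega^k(T^8) \otimes \mathfrak{g}$, and its kernel consists precisely of the parallel (i.e.\ constant, after trivialization) sections of $\Lambda^k(\R^8) \otimes \mathfrak{g}$. Thus:
\begin{itemize}
\item $\mathcal{H}^0_\theta \cong \mathfrak{g}^\Gamma$, the $\Gamma$-fixed subspace of $\mathfrak{g}$ under $\Ad \circ \rho$;
\item elements of $\mathcal{H}^1_\theta$, characterised as $a$ with $L_\theta a = 0$ and $d_\theta^* a = 0$ (plus the Coulomb complement of the image of $d_\theta$), correspond to parallel sections of $\Lambda^1(\R^8) \otimes \mathfrak{g}$ fixed by $\Gamma$, i.e.\ to $(\R^8 \otimes \mathfrak{g})^\Gamma$;
\item $\mathcal{H}^2_{7,\theta} \cong (\Lambda^2_7(\R^8) \otimes \mathfrak{g})^\Gamma$, by the same argument restricted to the $Spin(7)$-invariant subbundle $\Lambda^2_7$.
\end{itemize}
The three equivalences of the lemma then follow immediately from the definitions of irreducible, infinitesimally rigid, and unobstructed.

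The main technical point to handle carefully will be the Coulomb gauge/ellipticity argument on an orbifold, i.e.\ justifying that a class in $\mathcal{H}^1_\theta = \ker L_\theta / \Im d_\theta$ has a unique harmonic representative (that is, one in the kernel of both $d_\theta^*$ and $\pi^2_7 \circ d_\theta$) so that it can be identified with a parallel section. This is standard on smooth manifolds, and on the orbifold it reduces, via the identification with $\Gamma$-equivariant sections on $T^8$, to the corresponding statement on the flat torus; the only subtlety is checking that the $\Gamma$-equivariant subspaces are preserved by $\Delta$, which is immediate because $\Delta$ is $\Gamma$-equivariant. Once this is in place, everything is a direct translation between the language of harmonic sections on the orbifold and that of $\Gamma$-fixed vectors in the relevant representation.
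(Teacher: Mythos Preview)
Your overall strategy is the same as the paper's: reduce the cohomology groups $\mathcal{H}^0_\theta$, $\mathcal{H}^1_\theta$, $\mathcal{H}^2_\theta$ to spaces of parallel sections via a Weitzenb\"ock argument, and then invoke the holonomy principle to identify parallel sections with fixed vectors of the holonomy representation. The paper does this in two lines, using $L_\theta^* L_\theta = \nabla^*\nabla$ and $L_\theta L_\theta^* = \nabla^*\nabla$ directly on the orbifold; your version unwinds the holonomy principle explicitly by passing to a cover.

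However, there is a genuine error in your setup. The torus $T^8$ is \emph{not} the universal orbifold cover of $T^8/\Gamma$; the universal cover is $\R^8$, and $\pi_1^{\rm orb}(T^8/\Gamma)$ is generated by $\Gamma$ \emph{together with} the translations $\tau_1,\dots,\tau_8$. Consequently, the pullback $p^*\theta$ to $T^8$ is in general \emph{not} gauge-equivalent to the trivial connection: its holonomy is $\rho|_{\Z^8}$, which need not be trivial (and indeed is non-trivial in every example in Section~\ref{section:examples}). This propagates to your conclusions: you obtain $\mathfrak{g}^\Gamma$, $(\R^8\otimes\mathfrak{g})^\Gamma$, $(\Lambda^2_7\otimes\mathfrak{g})^\Gamma$, whereas the lemma requires the fixed subspaces under the full group $\pi_1^{\rm orb}(T^8/\Gamma)$. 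The two differ precisely when some $\rho(\tau_i)$ acts non-trivially. The fix is simple: either pull back all the way to $\R^8$ and replace $\Gamma$ by $\pi_1^{\rm orb}(T^8/\Gamma)$ throughout, or keep working on $T^8$ but with the non-trivial flat connection $p^*\theta$, whose parallel sections are already the $\Z^8$-fixed vectors, and then impose $\Gamma$-equivariance on top. Either way the argument goes through and agrees with the paper's proof.
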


\begin{proof}
    By the Weitzenb\"{o}ck formula \cite[Equations 1.74, 1.75]{Papo}, we have 
	\[
		L^*_\theta L_\theta=\nabla^* \nabla,
		\;
		L_\theta L^*_\theta=\nabla^* \nabla. 
	\]
	Thus, elements in the kernel of $L_\theta$ and elements in the kernel of $L^*_\theta$ are parallel sections of $\Lambda^1(T^8/\Gamma)  \otimes \Ad E_0 $ and $\Lambda^0(T^8/\Gamma ) \otimes \Ad E_0  \oplus \Lambda^2_7(T^8/\Gamma)  \otimes \Ad E_0$, respectively.
	Hence, the claim then follows from the holonomy principle.
\end{proof}

As discussed in Section \ref{section:construction}, the crucial ingredients for our construction are unobstructed flat connections on a $G$-bundle over the $Spin(7)$-orbifold. Flat connections are in bijective correspondence with representations of the orbifold fundamental group on $G$, and \cref{lemma:rep-theory-criterion-for-rigidity} provides a simple criterion to verify whether a given flat connection is irreducible, infinitesimally rigid, or unobstructed.

Now recall that an \emph{orbifold} is locally modeled on $\mathbb{R}^n$ quotiented by finite group actions. In Section \ref{section:joyces-construction-of-spin7-mf}, we consider the action of $\Gamma$ on $T^8$, where $\Gamma$ is given in eq. (\ref{equation:alpha-beta-gamma-delta-def}). Since the action of $\Gamma$ is properly discontinuous, we obtain that $T^8/\Gamma$ is an orbifold (see for example Proposition 13.2.1 in \cite{Thurston}). Furthermore, we may consider the universal cover $\R^8$ of $T^8$. Recall that the \emph{fundamental group} $\pi_1^{\rm{orb}}$ is the group of deck transformations of the universal cover. Therefore, the orbifold fundamental group of $T^8/\Gamma$ is generated by the group of involutions $\Gamma=\lbrace\alpha,\beta,\gamma,\delta\rbrace$ together with the translations along the torus directions, which we denote by $\lbrace\tau_i\rbrace_{i=1,\dots,8}$. 

\begin{lemma}
The generators $\alpha,\beta,\gamma,\delta, \tau_1, \dots , \tau_8$ of $\pi_1^{\rm{orb}} (T^8 / \Gamma )$ satisfy the following relations: 
\begin{enumerate}
    \item[$1.$] $[\tau_i,\tau_j]=1$ for all $i,j\in\lbrace1,\dots,8\rbrace$. That is, the $\tau_i$ commute among themselves.\label{it:condition1}
    
    \item[$2.$] $\alpha^2=\beta^2=\gamma^2=\delta^2=1$. That is, $\lbrace\alpha,\beta,\gamma,\delta\rbrace$ are involutions. \label{it:condition2}
    
    \item[$3.$] $[\alpha,\beta]=1$, $[\alpha,\gamma]=\tau_2^{-1}\tau_1^{-1}$, $[\alpha,\delta]=\tau_3^{-1}$, $[\beta,\gamma]=\tau_6^{-1}\tau_5^{-1}$, $[\beta,\delta]=\tau_7^{-1}$ and $[\gamma,\delta]=\tau_1\,$. This shows $\lbrace\alpha,\beta,\gamma,\delta\rbrace$ do not commute among themselves. \label{it:condition3}
    
    \item[$4.$] $\alpha\tau_i=\tau_i^{-1}\alpha$ for $i\in\lbrace1,2,3,4\rbrace$, $\alpha\tau_i=\tau_i\alpha$ for $i\in\lbrace5,6,7,8\rbrace$. \label{it:condition4}
    
    \item[$5.$] $\beta\tau_i=\tau_i^{-1}\beta$ for $i\in\lbrace5,6,7,8\rbrace$, $\beta\tau_i=\tau_i\beta$ for $i\in\lbrace1,2,3,4\rbrace$. \label{it:condition5}
    
    \item[$6.$] $\gamma\tau_i=\tau_i^{-1}\gamma$ for $i\in\lbrace1,2,5,6\rbrace$, $\gamma\tau_i=\tau_i\gamma$ for $i\in\lbrace3,4,7,8\rbrace$. \label{it:condition6}
    
    \item[$7.$] $\delta\tau_i=\tau_i^{-1}\delta$ for $i\in\lbrace1,3,5,7\rbrace$, $\delta\tau_i=\tau_i\delta$ for $i\in\lbrace2,4,6,8\rbrace$. \label{it:condition7}
\end{enumerate}
\label{lemma:conditions}
\end{lemma}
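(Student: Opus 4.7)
The plan is to identify the orbifold fundamental group explicitly as a group of affine transformations of $\R^8$, and then verify each relation by a direct coordinate-by-coordinate computation. Since the universal orbifold cover of $T^8/\Gamma$ is the simply connected manifold $\R^8$, we have $\pi_1^{\mathrm{orb}}(T^8/\Gamma)\cong \tilde\Gamma$, the group of deck transformations, which is the subgroup of $\mathrm{Aff}(\R^8)$ generated by the unit translations $\tau_1,\dots,\tau_8$ and by the affine lifts $\alpha,\beta,\gamma,\delta$ given by the same formulas as in \eqref{equation:alpha-beta-gamma-delta-def}. Each claimed relation is then an equality between two elements of $\mathrm{Aff}(\R^8)$, which can be checked by comparing linear parts (each a diagonal $\{\pm 1\}$-matrix) and translation parts (a vector in $\tfrac{1}{2}\Z^8$) coordinate by coordinate.

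First I would dispose of the easy parts. Relation 1 is immediate since translations of $\R^8$ commute, and relation 2 follows from one-line checks such as $\gamma^2(x_1)=\tfrac{1}{2}-(\tfrac{1}{2}-x_1)=x_1$; it is important here that we use the chosen lifts as maps on $\R^8$, not merely on $T^8$. In particular each of $\alpha,\beta,\gamma,\delta$ is its own inverse, so the commutators in relation 3 simplify to $[g,h]=ghgh$. Relations 4--7 also reduce to one-line coordinate checks: on each coordinate $x_i$, each of the four involutions acts either as $x_i\mapsto x_i$, as $x_i\mapsto -x_i$, or as $x_i\mapsto c-x_i$ with $c\in\{0,\tfrac{1}{2}\}$; in the first case the involution commutes with $\tau_i$, and in the latter two it anticommutes with $\tau_i$. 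Matching this against which coordinates each of $\alpha,\beta,\gamma,\delta$ negates immediately yields the sign patterns claimed.

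The main content is relation 3, where each commutator is verified by tracking a coordinate through the four-fold composition. For example, for $[\gamma,\delta]$ the chain
\[
 x_1 \xrightarrow{\delta} -x_1 \xrightarrow{\gamma} \tfrac{1}{2}+x_1 \xrightarrow{\delta} -\tfrac{1}{2}-x_1 \xrightarrow{\gamma} 1+x_1
\]
shows the $x_1$-component is $\tau_1$, while the remaining seven coordinates are quickly seen to be fixed, giving $[\gamma,\delta]=\tau_1$. The other five commutators are handled the same way; for instance the analogous chain on $x_3$ yields $[\alpha,\delta]=\tau_3^{-1}$, and $[\alpha,\beta]=1$ because $\alpha$ and $\beta$ act on disjoint sets of coordinates. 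The one subtlety, and the main source of potential error, is that the half-integer shifts appearing in the lifts of $\gamma$ and $\delta$ can combine upon composition to produce an integer translation even when the underlying map on $T^8$ is trivial---this is precisely the mechanism that turns trivial commutators on $T^8/\Gamma$ into the nontrivial translations recorded in relation 3. Once this bookkeeping is kept straight, all seven nontrivial commutators come out as stated.
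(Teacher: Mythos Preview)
Your proposal is correct and follows exactly the approach the paper takes: the paper identifies the generators as affine maps on $\R^8$, does a single sample computation (showing $\alpha\tau_1=\tau_1^{-1}\alpha$), and then writes ``We omit the proof.'' Your write-up is in fact more detailed than what appears in the paper, including the explicit verification of $[\gamma,\delta]=\tau_1$ and the observation that the half-integer shifts in $\gamma,\delta$ are what produce the nontrivial translation commutators.
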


The above relations in Lemma \ref{lemma:conditions} are obtained from the action of the group elements on $\mathbb{R}^8$, for example from
\begin{align*}
    \tau_1: & \left(x_1,x_2,x_3,x_4,x_5,x_6,x_7,x_8\right)\longmapsto\left(x_1+1,x_2,x_3,x_4,x_5,x_6,x_7,x_8\right) \, , \\
    \alpha: & \left(x_1,x_2,x_3,x_4,x_5,x_6,x_7,x_8\right)\longmapsto\left(-x_1,-x_2,-x_3,-x_4,x_5,x_6,x_7,x_8\right) \, ,
\end{align*}
it is immediate to verify that $\alpha\tau_1=\tau_1^{-1}\alpha$. We omit the proof.  (See also Ma \cite{Ma} for the corresponding $G_2$-orbifold case.)

Lemma \ref{lemma:conditions} gives conditions that representations of the orbifold fundamental group must satisfy.

\begin{remark}
\label{remark:all-are-involutions}
    Because of \cref{definition:compatible-gluing-data}, we are interested in representations for which $\lbrace\alpha,\beta\rbrace$ have a trivial representative. From the conditions 4 and 5 in Lemma \ref{lemma:conditions}, we immediately see that this implies that the representatives of all the $\lbrace\tau_i\rbrace_{i=1,\dots,8}$ must be involutions, so we will only be interested in subgroups of involutions of the structure group $G$. In fact, from the condition 1 in Lemma \ref{lemma:conditions} the representatives $\lbrace\rho(\tau_i)\rbrace_{i=1,\dots,8}$ generate a subgroup of commuting involutions, and the conditions 6 and 7 in Lemma \ref{lemma:conditions} imply they must also commute with $\rho(\gamma)$ and $\rho(\delta)$.

    Furthermore, the condition 3 in Lemma \ref{lemma:conditions} for $\rho(\alpha)=\rho(\beta)=1$ imposes additional constraints on the involutions representing the translations
    \begin{equation}
    \label{eq:conditions-alpha-beta-trivial}
        \rho(\tau_3)=\rho(\tau_7)=1, \qquad \rho(\tau_1)=\rho(\tau_2), \qquad \rho(\tau_5)=\rho(\tau_6) ,
    \end{equation}
    whereas there are no restrictions on $\rho(\tau_4)$ or $\rho(\tau_8)$.

    Finally, let us observe that if the representatives of $\gamma$ and $\delta$ also commute, the condition 3 in Lemma \ref{lemma:conditions} further implies that
    \begin{equation}
    \label{eq:conditions-trivial-alpha-delta-commuting}
        \rho(\tau_1)=\rho(\tau_2)=\rho(\tau_3)=\rho(\tau_7)=1, \qquad \rho(\tau_5)=\rho(\tau_6) ,
    \end{equation}
    with still no additional restrictions on $\rho(\tau_4)$ or $\rho(\tau_8)$. In this case, the image of the orbifold group generates a subgroup of commuting involutions of $G$ which is completely determined by the images of $\gamma,\delta,\tau_4,\tau_5,\tau_8$. The conditions 1 to 7 in Lemma \ref{lemma:conditions} are automatically satisfied as long as we impose the condition \eqref{eq:conditions-trivial-alpha-delta-commuting}. All the solutions we are going to present are of this type, as we will show in \Cref{proposition:no-so(n)-non-commutative-instantons} that without the commutativity assumption the associated connection is obstructed.
\end{remark}

\begin{remark}
\label{remark:for-us-H1-is-zero}
    When $\rho(\alpha)=\rho(\beta)=1$, the adjoint representation of $\alpha$ and $\beta$ acts trivially on $\mathfrak{g}$ whereas the holonomy representation does not fix any nonzero elements of $\mathbb{R}^8$. Therefore, from \cref{lemma:rep-theory-criterion-for-rigidity} we find that the corresponding flat connection is infinitesimally rigid.
\end{remark}

Now we prove a short lemma and proposition that will be useful to show that certain connections are obstructed.

\begin{lemma}
\label{lemma:holonomy-action-on-Lambda2}
    The holonomy action of $\alpha$ and $\beta$ fixes a 3-dimensional subspace $V$ of $\Lambda^2_7(\R^8)$. Within $V$, there is a 1-dimensional subspace $V_{-,-}$ where $\gamma$ and $\delta$ act both as $-1$, and two 1-dimensional subspaces $V_{\pm,\mp}$ where $\gamma$ acts as $\pm 1$ and $\delta$ acts as $\mp 1$, respecively. 
\end{lemma}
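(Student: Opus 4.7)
The plan is to identify the three-dimensional $\alpha,\beta$-fixed subspace $V$ of $\Lambda^2_7$ by an explicit block decomposition, and then diagonalise the commuting involutions $\gamma$ and $\delta$ on it by reading off signs.

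First I would split $\R^8 = \R^4_1 \oplus \R^4_2$ along the coordinates $(x_1,\ldots,x_4)$ and $(x_5,\ldots,x_8)$. The holonomy parts of $\alpha$ and $\beta$ act as $\mathrm{diag}(-1,-1,-1,-1,+1,+1,+1,+1)$ and $\mathrm{diag}(+1,+1,+1,+1,-1,-1,-1,-1)$, so under the induced splitting $\Lambda^2\R^8 = \Lambda^2\R^4_1 \oplus (\R^4_1\otimes\R^4_2) \oplus \Lambda^2\R^4_2$ each acts trivially on the first and third summands and by $-1$ on the middle summand. Their joint fixed subspace is therefore the $12$-dimensional space
\[
    W = \mathrm{span}\{dx_{ij} : i<j,\; \{i,j\}\subset\{1,2,3,4\} \text{ or } \{i,j\}\subset\{5,6,7,8\}\}.
\]

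Since $\alpha$ and $\beta$ preserve both $\Omega_0$ and the metric, the operator $S(\omega):=*(\Omega_0\wedge\omega)$ from the remark in Section 3.1 commutes with them and thus preserves $W$. Inspection of the fourteen terms of $\Omega_0$ shows that every mixed summand is a wedge of two basis two-forms lying in a common one of the three blocks
\begin{align*}
    B_1 &= \{dx_{12}, dx_{34}, dx_{56}, dx_{78}\}, \\
    B_2 &= \{dx_{13}, dx_{24}, dx_{57}, dx_{68}\}, \\
    B_3 &= \{dx_{14}, dx_{23}, dx_{58}, dx_{67}\},
\end{align*}
and Hodge dualising the contributions of $dx_{1234}\wedge dx_{ij}$ and $dx_{5678}\wedge dx_{ij}$ preserves the block as well. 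Hence $S$ is block-diagonal on $W = \langle B_1\rangle\oplus\langle B_2\rangle\oplus\langle B_3\rangle$. A direct computation on $B_1$ yields the $4\times 4$ matrix $J-I$ (all-ones minus identity), whose unique $+3$-eigenvector up to scale is $\omega_1 := dx_{12}+dx_{34}+dx_{56}+dx_{78}$; analogous computations give unique $+3$-eigenvectors $\omega_2 := dx_{13}-dx_{24}+dx_{57}-dx_{68}$ on $B_2$ and $\omega_3 := dx_{14}+dx_{23}-dx_{58}-dx_{67}$ on $B_3$. These are linearly independent, being supported on disjoint basis two-forms, so $V := W\cap \Lambda^2_7 = \langle\omega_1,\omega_2,\omega_3\rangle$ is three-dimensional as claimed.

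Finally, the $\gamma$- and $\delta$-eigenvalues are read off from their linear parts $\mathrm{diag}(-,-,+,+,-,-,+,+)$ and $\mathrm{diag}(-,+,-,+,-,+,-,+)$: each $dx_{ij}$ is an eigenvector with eigenvalue the product of the two relevant sign entries. A one-line sign check on each summand of $\omega_i$ yields $\gamma\omega_1 = +\omega_1$, $\delta\omega_1 = -\omega_1$; $\gamma\omega_2 = -\omega_2$, $\delta\omega_2 = +\omega_2$; and $\gamma\omega_3 = -\omega_3$, $\delta\omega_3 = -\omega_3$. This identifies the one-dimensional eigenspaces $V_{+,-} = \langle\omega_1\rangle$, $V_{-,+} = \langle\omega_2\rangle$, $V_{-,-} = \langle\omega_3\rangle$ claimed in the lemma. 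The only genuine obstacle is the bookkeeping for $S$ on one representative of each block, which reduces to computing $*(dx_{ij}\wedge\Omega_0)$ for three elements $dx_{ij}$ and is a short exercise in Hodge-dual signs once block-invariance is established.
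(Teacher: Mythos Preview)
Your proof is correct and follows the same strategy as the paper: both produce an explicit basis $\{\omega_1,\omega_2,\omega_3\}$ of $V$ and then read off the $\gamma,\delta$-signs term by term. Your derivation via the block decomposition of $S$ on $W$ is a bit more systematic than the paper's, which simply writes down a basis of $\Lambda^2_7$ and checks the signs. One point worth flagging: your $\omega_3=dx_{14}+dx_{23}-dx_{58}-dx_{67}$ differs in the last two signs from the paper's $e_3=dx_{14}+dx_{23}+dx_{58}+dx_{67}$, and a direct check (as you effectively carry out) shows that your version is the one satisfying $S\omega_3=3\omega_3$, whereas the paper's $e_3$ as printed satisfies $Se_3=-e_3$ and so lies in $\Lambda^2_{21}$. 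This is a harmless typo in the paper, since every monomial in $e_3$ or $\omega_3$ already has $(\gamma,\delta)$-eigenvalue $(-1,-1)$, so the identification $V_{-,-}=\langle\omega_3\rangle$ and the conclusion of the lemma are unaffected.
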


\begin{proof}
    Consider the following basis of $\Lambda^2_7(\R^8)$:
    \begin{align*}
        e_1=d x_{12}+d x_{34}+d x_{56}+d x_{78}, \qquad e_2=d x_{13}-d x_{24}+d x_{57}-d x_{68}, \\
        e_3=d x_{14}+d x_{23}+d x_{58}+d x_{67}, \qquad e_4=d x_{15}-d x_{26}-d x_{37}+d x_{48}, \\
        e_5=d x_{16}+d x_{25}+d x_{38}+d x_{47}, \qquad e_6=d x_{17}-d x_{28}+d x_{35}-d x_{46}, \\
        e_7=d x_{18}+d x_{27}-d x_{36}-d x_{45}.
    \end{align*}
    It is immediate to check that $\alpha$ and $\beta$ act as the identity on $V=\langle e_1,e_2,e_3\rangle$ and as minus the identity on the subspace $\langle e_4,e_5,e_6,e_7\rangle$. Now, from the holonomy action of $\gamma$ and $\delta$ we find that $V_{-,-}=\langle e_3\rangle$, $V_{+,-}=\langle e_1\rangle$ and $V_{-,+}=\langle e_2\rangle$
\end{proof}

\begin{proposition}
\label{proposition:showing-connections-are-obstructed}
    Let $\rho: \pi_1^{\rm{orb}}(T^8/\Gamma) \rightarrow G$ be a group homomorphism from the orbifold group to a gauge group $G$ such that $\rho(\alpha)=\rho(\beta)=1$. Suppose there exists an element $g\in\mathfrak{g}$ fixed by $\lbrace\rho(\tau_i)\rbrace_{i=1,\dots,8}$ and such that at least one of $\rho(\gamma)$, $\rho(\delta)$ acts on $g$ with a global minus sign, with the other element acting as $\pm 1$. Then, the associated connection is obstructed.
\end{proposition}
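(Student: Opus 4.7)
The plan is to produce an explicit non-zero fixed vector in $\Lambda^2_7(\R^8) \otimes \mathfrak{g}$ under the induced action of $\pi_1^{\rm{orb}}(T^8/\Gamma)$, since by \cref{lemma:rep-theory-criterion-for-rigidity} this is equivalent to $\theta$ being obstructed. The vector will be of the form $\omega \otimes g$ for a suitable $\omega$ in the three-dimensional $\alpha,\beta$-fixed subspace $V\subset \Lambda^2_7(\R^8)$ supplied by \cref{lemma:holonomy-action-on-Lambda2}.

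First I would observe that on $\Lambda^2_7(\R^8)\otimes\mathfrak{g}$ the generators $\tau_i$ act trivially on the first tensor factor (their holonomy action on tangent vectors is the identity), while they act on $\mathfrak{g}$ through $\mathrm{Ad}\,\rho(\tau_i)$; by hypothesis this fixes $g$. Hence for any $\omega\in\Lambda^2_7(\R^8)$, the element $\omega\otimes g$ is automatically $\tau_i$-invariant. Similarly, since $\rho(\alpha)=\rho(\beta)=1$, the adjoint action on $\mathfrak{g}$ is trivial for these generators, so $\omega\otimes g$ is $\alpha,\beta$-invariant iff $\omega$ is $\alpha,\beta$-invariant. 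This reduces the problem to finding $\omega\in V$ such that $\omega\otimes g$ is additionally fixed by $\rho(\gamma)$ and $\rho(\delta)$.

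Next I would enumerate the possibilities for the signs with which $\rho(\gamma),\rho(\delta)$ act on $g$, namely $(-,-)$, $(-,+)$, or $(+,-)$ (these are exactly the cases allowed by the hypothesis). In each case pick $\omega$ from the corresponding one-dimensional subspace of $V$ from \cref{lemma:holonomy-action-on-Lambda2}: take $\omega\in V_{-,-}$, $\omega\in V_{-,+}$, or $\omega\in V_{+,-}$, respectively. Since the eigenvalues of $\rho(\gamma)$ and $\rho(\delta)$ on $\omega$ then coincide with those on $g$, and $(\pm 1)^2=+1$, the tensor $\omega\otimes g$ is fixed by both $\gamma$ and $\delta$. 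Combined with the previous paragraph, $\omega\otimes g$ is a non-zero element of $\Lambda^2_7(\R^8)\otimes\mathfrak{g}$ fixed by all of $\alpha,\beta,\gamma,\delta,\tau_1,\dots,\tau_8$, hence by the entire orbifold fundamental group.

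I do not expect any serious obstacles in this proof: the content is essentially a case analysis on the three sign patterns, and the structural input from Lemmas \ref{lemma:rep-theory-criterion-for-rigidity} and \ref{lemma:holonomy-action-on-Lambda2} does all the work. The only point to be mildly careful about is that the hypothesis excludes the $(+,+)$ case; this is crucial because there is no $V_{+,+}$ summand in $V$, which is why we cannot conclude obstructedness in that situation.
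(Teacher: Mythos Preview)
Your proof is correct and follows essentially the same approach as the paper: reduce via \cref{lemma:rep-theory-criterion-for-rigidity} to exhibiting a non-zero fixed vector in $\Lambda^2_7(\R^8)\otimes\mathfrak{g}$, then use the sign decomposition of $V$ from \cref{lemma:holonomy-action-on-Lambda2} to match the $(\gamma,\delta)$-signs on $g$ with the appropriate one-dimensional subspace $V_{-,-}$, $V_{-,+}$, or $V_{+,-}$. Your additional remark that there is no $V_{+,+}$ summand, and hence no conclusion in the excluded case, is a nice observation.
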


\begin{proof}
    By \cref{lemma:rep-theory-criterion-for-rigidity}, it is enough to show there exists an element of $\Lambda^2_7(\R^8) \tensor \mathfrak{g}$ fixed by the action of the orbifold group. To this end, we will use the subspaces $V,V_{-,-},V_{\pm,\mp}\subset\Lambda^2_7(\R^8)$ we introduced in \cref{lemma:holonomy-action-on-Lambda2}. First note the subspace $V\tensor\langle g \rangle$ is fixed by $\rho(\alpha)$, $\rho(\beta)$ and $\lbrace\rho(\tau_i)\rbrace_{i=1,\dots,8}$, so it is enough to find an element in this subspace that is also fixed by $\rho(\gamma)$ and $\rho(\delta)$.

    If both $\rho(\gamma)$ and $\rho(\delta)$ act with a minus sign on $g$, then the subspace $V_{-,-}\otimes \langle g\rangle$ is fixed by the action of $\rho(\gamma)$ and $\rho(\delta)$. On the other hand, if $\rho(\gamma)$ acts as $-1$ and $\rho(\delta)$ acts as $+1$ on $g$, the subspace $V_{-,+}\otimes \langle g\rangle$ is then fixed by the action of both elements. Finally, if $\rho(\gamma)$ acts trivially and $\rho(\delta)$ acts with a minus sign on $g$ then $V_{+,-}\otimes \langle g\rangle$ is a subspace fixed by $\rho(\gamma)$ and $\rho(\delta)$. Therefore, in all cases the associated connection is obstructed.
\end{proof}

In the rest of this section, we describe how to obtain compatible gluing data in the sense of \cref{definition:compatible-gluing-data} from flat connections. 

We denote by $[M]$ the real rank $2 \mu$ bundle underlying a complex vector bundle $M$ of rank $\mu$, and use the notation $s V := \underbrace{V \oplus \dots \oplus V}_{s \text{ times}}$ for any vector bundle $V$.

% The following lemma is a generalisation of the $SO(3)$ case from of \cite[p.133]{DoKr}:

\begin{lemma}
\label{lemma:reducible-so(n)-connections}
    Let $L$ be a complex vector bundle of rank one with connection $A$ over a manifold $X$.
    Consider $E := m[L^t] \oplus \underline{\R^k}$ for $m,k \in \mathbb{Z}_{\geq 0}, \, t \in \mathbb{Z}$.
    Then there exist $s_0, s_1, s_2 \in \mathbb{Z}_{\geq 0}$ such that
    \[
        \Ad E
        =
        s_0 \underline{\mathbb{R}} \oplus s_1 [L^t] \oplus s_2 [L^{2t}]
    \]
    and the connection on $\Ad E$ induced by $A$ restricts to each of the summands.
\end{lemma}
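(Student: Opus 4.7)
The plan is to decompose $\Ad E \cong \Lambda^2 E$ into blocks using the splitting $E = m[L] \oplus \underline{\R^k}$, and then to further split the one nontrivial block by exploiting the complex structure on $m[L]$. Since $A$ induces a direct sum connection on $E$ that preserves the splitting, every $U(1)$-invariant subbundle produced along the way will automatically be parallel.

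Using the natural orthogonal structure on $E$, write
\[
 \Ad E \cong \Lambda^2 E \cong \Lambda^2(m[L]) \oplus \bigl(m[L] \otimes \underline{\R^k}\bigr) \oplus \Lambda^2(\underline{\R^k}).
\]
The two blocks involving $\underline{\R^k}$ are immediate: $\Lambda^2(\underline{\R^k}) \cong \binom{k}{2}\underline{\R}$ with the trivial connection, and $m[L] \otimes \underline{\R^k} \cong mk[L]$ carrying $A$ on each copy. What is left is to identify $\Lambda^2(m[L])$.

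For this I would view $m[L]$ as the realification of the complex Hermitian bundle $W := L \otimes_{\mathbb{C}} \underline{\mathbb{C}^m}$, which has complex rank $m$ and carries the unitary connection $A \otimes \d$. The classical $U(m)$-equivariant splitting of skew-symmetric real endomorphisms into $J$-commuting and $J$-anticommuting pieces gives
\[
 \Lambda^2_\R[W] \;\cong\; \mathfrak{u}(W) \;\oplus\; [\Lambda^2_\C W],
\]
where the first summand consists of skew-Hermitian endomorphisms and the second is the realification of the complex $2$-form bundle. Plugging in $W = L \otimes_\C \underline{\C^m}$: the scalars from $L$ act trivially on $\mathfrak{u}(m)$ by conjugation, so $\mathfrak{u}(W) \cong \underline{\mathfrak{u}(m)} = m^2\,\underline{\R}$ with trivial induced connection; and
\[
 \Lambda^2_\C \bigl(L \otimes_\C \underline{\C^m}\bigr) \cong L^{\otimes 2} \otimes \Lambda^2_\C \underline{\C^m} \cong \tbinom{m}{2}\, L^{\otimes 2},
\]
whose realification is $\binom{m}{2}[L^2]$ with the induced connection on each $L^{\otimes 2}$ factor. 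Collecting contributions yields the stated decomposition with $s_0 = m^2 + \binom{k}{2}$, $s_1 = mk$, and $s_2 = \binom{m}{2}$.

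The one point that genuinely requires checking is that the splitting $\Lambda^2_\R[W] \cong \mathfrak{u}(W) \oplus [\Lambda^2_\C W]$ is not merely $U(m)$-equivariant pointwise but $\nabla$-parallel. This reduces to noting that the two projectors are algebraic expressions in the complex structure $J$ and the Hermitian metric (for instance, the projection onto $\mathfrak{u}(W)$ sends the skew endomorphism $A_\omega$ to $\tfrac{1}{2}(A_\omega - J A_\omega J)$), both of which are parallel for any unitary connection; hence the projectors commute with $\nabla$, and each summand inherits a well-defined restricted connection of the advertised form.
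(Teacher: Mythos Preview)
Your argument is correct and takes a genuinely different route from the paper's. The paper proceeds by induction on the real rank of $E$: it complexifies, peels off either a trivial $\underline{\C}^2$ or an $L \oplus L^{-1}$ summand from $E_\C$, computes $\mathfrak{so}_\C(E_\C)=\Lambda^2 E_\C$ accordingly, and invokes the induction hypothesis on the remaining piece. You instead work directly with the block decomposition $\Lambda^2(m[L]\oplus\underline{\R^k})$ and, for the only nontrivial block $\Lambda^2(m[L])$, invoke the standard $U(m)\subset SO(2m)$ branching $\mathfrak{so}([W])\cong\mathfrak{u}(W)\oplus[\Lambda^2_\C W]$ applied to $W=L\otimes\underline{\C^m}$. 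Your approach is more explicit---it yields closed formulae $s_0=m^2+\binom{k}{2}$, $s_1=mk$, $s_2=\binom{m}{2}$, which the inductive proof does not---and your final paragraph correctly isolates the one point requiring care, namely that the projectors onto the two summands are built from $J$ and $h$ and hence parallel. The paper's induction is arguably more self-contained (it avoids quoting the $U(m)$ decomposition), but both arguments are short and yours is cleaner.
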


\begin{proof}
We first consider the complexification $E_{\mathbb{C}} = E \otimes \mathbb{C}$, i.e.
$$  E_{\mathbb{C}} \simeq m (L^t \oplus L^{-t} ) \oplus \underline{\C^k}. $$
For the $\mathfrak{so}(n,\mathbb{C})$-endomorphisms of $E_{\C}$,  we have $
\mathfrak{so}_{\mathbb{C}}(E_{\C})
        =
        \Lambda^2(E_{\C}) $. Since $ \Lambda^2( V \oplus W) = \Lambda^2(W) \oplus ( V \otimes W ) \oplus \Lambda^2(V)$ for any vector bundles $V$ and $W$ over $X$, we obtain 
\begin{align*}
      \Lambda^2(E_{\C})
        &=
        \Lambda^2( m (L^t \oplus L^{-t} ) \oplus \underline{\C^k} )
        \\
        &= \Lambda^2( \underline{\C^k} ) \oplus 
        ( m (L^t \oplus L^{-t} ) \otimes \underline{\C^k} ) 
        \oplus  \Lambda^2( m (L^t \oplus L^{-t} ) ) 
        \\
        &= \Lambda^2( \underline{\C^k} ) \oplus 
        ( m k (L^t \oplus L^{-t} ) ) 
        \oplus  \Lambda^2( m  L^{-t} ) \oplus m^2 \underline{\C} \oplus  \Lambda^2( m  L^{t} ). 
    \end{align*}
Here, $ \Lambda^2( m  L^{t} ) = (m-1) L^{2t}  \oplus \Lambda^2 (m-1)  L^{t} =\cdots = \frac{m (m-1)}{2} L^{2t} $. Similarly, 
$ \Lambda^2( m  L^{-t} ) = \frac{m (m-1)}{2} L^{- 2t} $, $\Lambda^2( \underline{\C^k} ) = \frac{k(k-1)}{2} \underline{\C} $. 
Hence, 
\[ 
\Lambda^2(E_{\C}) 
= 
\left( \frac{k(k-1)}{2} + m^2 \right) \underline{\C} \oplus 
        ( m k (L^t \oplus L^{-t} ) ) 
        \oplus \frac{m (m-1)}{2} ( L^{2t} \oplus L^{-2t} ) .\]
Since $ (\Ad E )_{\C}
        =
        \mathfrak{so}_{\mathbb{C}}(E_{\C}) =  \Lambda^2(E_{\C})$, by taking the real form of it, we obtain 
        \[ 
        \Ad E
        =
        s_0 \underline{\mathbb{R}} \oplus s_1 [L^t] \oplus s_2 [L^{2t}], \]
where $s_0 := \left( \frac{k(k-1)}{2} + m^2 \right) , s_1 := mk, s_2 := \frac{m (m-1)}{2} $. 
\end{proof}

\begin{proposition}
\label{proposition:existence-of-ALE-side-connection}
    Choose $n>1$ and let $\rho: \pi_1^{\rm{orb}}(T^8/\Gamma) \rightarrow SO(n)$ be a representation so that $\rho(\alpha)=\rho(\beta)=1$.
    Let $E_0$ be the corresponding $SO(n)$-bundle with flat connection $\theta$.
    Then, for each choice of four integers $\underline{\xi}=(\xi_{73}, \xi_{74}, \xi_{75}, \xi_{76})$, the connection $\theta$ can be extended to compatible gluing data in the sense of \cref{definition:compatible-gluing-data}.
    Furthermore, two different choices of $\underline{\xi}$ lead to different compatible gluing data, so that when the construction from \cref{theorem:main-gluing-result} is applied, the different $Spin(7)$-instantons are not gauge-equivalent.
\end{proposition}

\begin{proof}
    For $j \in \{73, 74\}$ we can canonically identify $\pi_1(T_j \setminus S_j) \cong \langle \gamma, \tau_3, \tau_4, \tau_7, \tau_8 \rangle$.
    The group element $\gamma \in \Gamma$ has order $2$, so, $\hol_\theta(\gamma)=: g_j \in SO(n)$ is the identity or has order $2$.

    Denote by $(L, A)$ the $U(1)$-bundle with ASD instanton on the Eguchi--Hanson space $X$ from \cref{proposition:eguchi-hanson-ASD-example}.
    Choose $m,k \geq 0$ with $2m+k=n$ such that $m[L] \oplus \underline{\R^k}$ has $\hol(\infty)=g_j$ for the connection induced by $A$.
    If $g_j \neq 1$, that means that $m \neq 0$, and we define $E_j := m[L^{\otimes (2\xi_j+1)}] \oplus \underline{\R^k}$.
    If $g_j = 1$, then $m=0$ and therefore $k \geq 2$, so we can define
    $E_j := [L^{\otimes(2 \xi_j)}] \oplus \underline{\R^{k-2}}$.
    Either way, if we let $A_j$ denote the connection on $E_j$ induced by $A$, it still has $\hol_{A_j}(\infty)=g_j$.
    
    We claim that ${A}_j$ is infinitesimally rigid.
    By \cref{lemma:reducible-so(n)-connections}, we have $\Ad E_j = s_0 \underline{\mathbb{R}} \oplus s_1 [L^t] \oplus s_2 [L^{2t}]$ with $s_0, s_1, s_2, t \in \mathbb{Z}_{\geq 0}$ and the induced connection reduces to each of the summands.
    We must check that the kernel of $\delta_{A_j}$ restricted to each of the summands of $\Ad (E_j)$ is zero.

    We first prove that $\Ker \delta_{A_j}|_{\Omega^1(\underline{\R})}=0$.
    This is equivalent to checking $\Ker \tilde{\delta}_{A_j}=0$ for the rescaled operator $\tilde{\delta}_{A_j}= \tilde{\delta} := (\d^*, \sqrt{2} \d^+)$.
    Let $a \in \Ker \tilde{\delta}$ which decays at infinity.
    By \cite[Proposition 2.41]{Walp3} we have that $a$ and its derivatives are in $L^2$.
    The operator $\tilde{\delta}$ satisfies the Weitzenb\"ock formula \cite[Equation 6.25]{FrUh}.
    The connection induced by $A_j$ on $\underline{\R}$ is the trivial connection, thus, the curvature term in the Weitzenb\"ock formula vanishes.
    Furthermore, Eguchi--Hanson space is Ricci-flat, thus,  $\tilde{\delta}^* \tilde{\delta}=\nabla^* \nabla$.
    Integration by parts, we obtain $\nabla a=0$.
    Because $a$ is assumed to decay at infinity, this implies $a=0$.
    Thus, $\Ker \delta_{A_j}|_{\Omega^1(\underline{\R})}=0$.

    It remains to show that $\Ker \delta_{A_j}|_{\Omega^1(L^t)}=0$ for any integer $t \neq 0$.
    The argument is similar to the above:
    we again apply the Weitzenb\"ock formula \cite[Equation 6.25]{FrUh}.
    We note that the curvature term in the Weitzenb\"ock formula contains a commutator.
    Since $L^t$ is a line bundle, this commutator is in an abelian Lie algebra, hence, the curvature term vanishes.  Then the argument is the same as in the case of $\Omega^1(\underline{\R})$ above.
    Namely, we obtain $\Ker \delta_{A_j}|_{\Omega^1(L^t)}=0$ as in the previous paragraph.

    Taking everything together, we obtain  $\Ker \delta_{A_j}=0$.

    The data from (ii) in \cref{definition:compatible-gluing-data} can be chosen arbitrarily.
    The construction for $j \in \{75, 76\}$ and $\hol_{\theta}(\delta) =: g_j$ is analogous.
    
    Different choices of $\underline{\xi}$ lead to different Pontrjagin classes by \cref{remark:pontrjagin-classes}, so lead to $Spin(7)$-instantons that are not gauge-equivalent.
\end{proof}

\begin{remark}
    \Cref{proposition:existence-of-ALE-side-connection} is not stated in the greatest generality possible.
    For $SO(n)$ with $n>3$ one can combine different tensor powers of $L$ on a single glued-in piece.
    For example, where \cref{proposition:existence-of-ALE-side-connection} constructs the rank $4$ vector bundle $[L^{\otimes (2 \xi)}] \oplus \underline{\R^2}$, one might as well use $[L^{\otimes (2 \xi)}] \oplus [L^{\otimes (2 \zeta)}]$ instead, for $\zeta \geq 0$.
\end{remark}

For illustration purposes, we briefly present how our construction can be used to recover some known instantons with structure group $G=SO(2)=U(1)$.

\begin{proposition}
\label{proposition:no-SO2-instantons}
    Consider a representation $\rho: \pi_1^{\rm{orb}}(T^8/\Gamma) \rightarrow SO(2)$. The associated connection is infinitesimally rigid and unobstructed, but it is always reducible.
\end{proposition}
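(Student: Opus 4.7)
The plan is to apply \Cref{lemma:rep-theory-criterion-for-rigidity} to the three representation-theoretic data $\mathfrak{g}$, $\mathbb{R}^8 \otimes \mathfrak{g}$, and $\Lambda^2_7(\mathbb{R}^8) \otimes \mathfrak{g}$, exploiting the fact that $\mathfrak{g} = \mathfrak{so}(2)$ is one-dimensional and the adjoint action is trivial because $SO(2)$ is abelian.

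First I would establish reducibility. Since $SO(2)$ is abelian, the adjoint action of $\rho(\pi_1^{\rm{orb}}(T^8/\Gamma))$ on $\mathfrak{so}(2) \cong \mathbb{R}$ is trivial, so every element of $\mathfrak{so}(2)$ is fixed. By \Cref{lemma:rep-theory-criterion-for-rigidity}, the associated flat connection is therefore reducible, regardless of $\rho$.

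Next I would verify infinitesimal rigidity. Since the adjoint action is trivial, the action of $\pi_1^{\rm{orb}}(T^8/\Gamma)$ on $\mathbb{R}^8 \otimes \mathfrak{so}(2) \cong \mathbb{R}^8$ reduces to the holonomy action of the Levi-Civita connection, that is, the action of $\Gamma$ on $\mathbb{R}^8$ from \cref{equation:alpha-beta-gamma-delta-def}. Observing that $\alpha$ acts as $-1$ on the coordinates $(x_1,\dots,x_4)$ while $\beta$ acts as $-1$ on the coordinates $(x_5,\dots,x_8)$, the only common fixed vector of $\alpha$ and $\beta$ in $\mathbb{R}^8$ is $0$. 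Thus $\mathbb{R}^8 \otimes \mathfrak{so}(2)$ has no non-zero fixed vector, and the connection is infinitesimally rigid by \Cref{lemma:rep-theory-criterion-for-rigidity}.

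Finally I would check unobstructedness by the same reduction: the action on $\Lambda^2_7(\mathbb{R}^8) \otimes \mathfrak{so}(2) \cong \Lambda^2_7(\mathbb{R}^8)$ is purely the holonomy action of $\Gamma$. By \Cref{lemma:holonomy-action-on-Lambda2}, the subspace of $\Lambda^2_7(\mathbb{R}^8)$ fixed by both $\alpha$ and $\beta$ is the three-dimensional space $V = V_{-,-} \oplus V_{+,-} \oplus V_{-,+}$; inspecting the three listed eigenspaces one sees that in each summand at least one of $\gamma, \delta$ acts as $-1$, so no non-zero element of $V$ is simultaneously fixed by $\gamma$ and $\delta$. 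Hence \Cref{lemma:rep-theory-criterion-for-rigidity} yields unobstructedness. No step looks to be a real obstacle; the statement essentially follows from abelianness of $\mathfrak{so}(2)$ combined with the already established eigenspace decomposition in \Cref{lemma:holonomy-action-on-Lambda2}.
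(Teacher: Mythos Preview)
Your proof is correct and follows essentially the same approach as the paper's, invoking \Cref{lemma:rep-theory-criterion-for-rigidity} together with the triviality of the adjoint action of the abelian group $SO(2)$. The only cosmetic difference is that the paper first observes (via \Cref{lemma:conditions}) that the image of $\rho$ is contained in $\{\pm 1\}$, whereas you bypass this by noting directly that abelianness of $SO(2)$ makes the adjoint action trivial; your route is slightly more economical, and your explicit appeal to \Cref{lemma:holonomy-action-on-Lambda2} for the unobstructedness claim is a bit more detailed than the paper's one-line assertion.
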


\begin{proof}
    Since $SO(2)$ is abelian, \Cref{lemma:conditions} implies that the image of $\rho$ is a subgroup of commuting involutions of $SO(2)$ and it is therefore contained in $\lbrace\pm1\rbrace$. This shows the adjoint representation of $\pi_1^{\rm{orb}}(T^8/\Gamma)$ acts trivially on $\mathfrak{so}(2)$ and by \Cref{lemma:rep-theory-criterion-for-rigidity} the associated connection is reducible. On the other hand, the holonomy representation of $\alpha,\beta,\gamma,\delta$ leaves no non-zero elements of $\mathbb{R}^8$ and $\Lambda^2_7(\mathbb{R}^8)$ fixed and again by \Cref{lemma:rep-theory-criterion-for-rigidity} we conclude that the associated connection is infinitesimally rigid and unobstructed.
\end{proof}

\begin{remark}
    Setting $\rho(\alpha)=\rho(\beta)=1$, by \Cref{remark:all-are-involutions} we see a representation is completely determined by choosing $\gamma,\delta,\tau_4,\tau_5,\tau_8\in\lbrace\pm1\rbrace$. Since at least one of $\rho(\gamma)$ and $\rho(\delta)$ must be non-trivial, we obtain $(2^2-1)2^3=24$ non-equivalent flat connections.
    They can be extended to compatible gluing data by \cref{proposition:existence-of-ALE-side-connection}.
    \Cref{theorem:main-gluing-result} guarantees the existence of $Spin(7)$-instantons with structure group $SO(2)$ even though $\theta$ is reducible, thanks to \cref{remark:reducible-theta}.
    A priori, the resulting $Spin(7)$-instanton $\tilde{A}_t:=A_t+a_t$ could be reducible, which would imply that it is flat.
    There are different ways to see this cannot occur:
    one is that by \cref{remark:pontrjagin-classes} we have that $p_1(E_t) \neq 0$ if at least one of $\rho(\gamma)$ and $\rho(\delta)$ is non-trivial, so the Chern-Weil representative of $p_1(E_t)$ is non-zero, i.e. $\tilde{A}_t$ has non-zero curvature.
    Note that the existence of $\tilde{A}_t$ was already known on abstract grounds, because for each harmonic $\omega \in \Omega^2(M)$ there exists a line bundle with connection $\nabla$, such that $F_\nabla=\omega$.
    By \cite[Proposition 10.6.5]{Joyc5}, $0=\pi^2_7(\omega)=\pi^2_7(F_\nabla)$, and $\nabla$ is a $Spin(7)$-instanton.
    However, the precise description of $\nabla$ in terms of our gluing theorem is new.
\end{remark}

\subsection{Examples with structure group SO(3)}
\label{subsection:SO3-examples}

In this section, we describe how to construct irreducible, infinitesimally rigid, and unobstructed flat connections with structure group $G={SO}(3)$. We consider a representation $\rho: \pi_1^{\rm{orb}}(T^8/\Gamma) \rightarrow {SO}(3)$ with $\rho(\alpha)=\rho(\beta)=1$ and such that the representatives of $\gamma$ and $\delta$ commute, with at least one of them being non-trivial. By \cref{remark:all-are-involutions}, the representation is well-defined as long as the condition \eqref{eq:conditions-trivial-alpha-delta-commuting} is satisfied. As a result, $\rho$ is completely determined by the images of $\gamma,\delta,\tau_4,\tau_5,\tau_8$, which generate a subgroup of commuting involutions of $SO(3)$.

Recall involutions of $SO(3)$ are given by rotations of angle $\pi$, and two of them commute if and only if they are performed either along the same axis or along perpendicular axes. Therefore, the maximal number of commuting involutions in $SO(3)$ is three, and the image of the representation is a subgroup of $\mathbb{Z}_2\times\mathbb{Z}_2$. By performing a change of basis---which corresponds to a gauge transformation of the connection---we can assume the subgroup consists of rotations around the standard axes, which is precisely the Klein four-group $K=\langle a,b,c \rangle$, where:
\begin{equation*}
a=\begin{pmatrix}
1 & 0 & 0 \\
0 & -1 & 0 \\
0 & 0 & -1
\end{pmatrix}, \quad
b=\begin{pmatrix}
-1 & 0 & 0 \\
0 & 1 & 0 \\
0 & 0 & -1
\end{pmatrix}, \quad
c=\begin{pmatrix}
-1 & 0 & 0 \\
0 & -1 & 0 \\
0 & 0 & 1
\end{pmatrix}.
\end{equation*}
Note each element of $\mathfrak{so}(3)$ is the infinitesimal generator of rotations on a certain plane and it is therefore only fixed by the adjoint action of rotations on the same plane. The adjoint action of a rotation of angle $\pi$ around an axis which is perpendicular to that of the infinitesimal rotation results in a global change of sign. 
Taking the generators $u_1,u_2,u_3 \in \mathfrak{so}(3)$ defined by
\begin{equation}
\label{eq:basis-of-so3-lie-algebra}
    u_1 = \begin{pmatrix}  0 & 0 & 0 \\ 
0 & 0 & -1 \\ 
0 & 1 & 0 \\ \end{pmatrix} , 
 u_2 = \begin{pmatrix}  0 & 0 & 1 \\ 
0 & 0 & 0 \\ 
-1 & 0 & 0 \\ \end{pmatrix} , 
 u_3 = \begin{pmatrix}  0 & -1 & 0 \\ 
1 & 0 & 0 \\ 
0 & 0 & 0 \\ \end{pmatrix},
\end{equation} 
one easily checks
\begin{align*}
%\label{equation:klein-action-on-so3}
\begin{split}
a (u_1) = u_1 , \quad b (u_1) = - u_1 , \quad c (u_1) = - u_1 , \\
a (u_2) = - u_2 , \quad b (u_2) = u_2 , \quad c (u_2) = - u_2 ,\\
a (u_3) = - u_3 , \quad b (u_3 ) = - u_3 , \quad c (u_3) = u_3   .
\end{split}
\end{align*}

Now we perform a detailed study of which connections are irreducible and unobstructed.
\begin{lemma}
\label{lemma:so3-irreducible}
    The connection associated with the representation $\rho$ is irreducible if and only if there exist two elements within $\gamma,\delta,\tau_4,\tau_5,\tau_8$ with different non-trivial representatives.
\end{lemma}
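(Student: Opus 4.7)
The plan is to apply the cohomological criterion of Lemma \ref{lemma:rep-theory-criterion-for-rigidity}: irreducibility of the flat connection associated to $\rho$ is equivalent to the absence of nonzero vectors in $\mathfrak{so}(3)$ fixed by the adjoint action of the image $\rho(\pi_1^{\rm{orb}}(T^8/\Gamma))$. My first step is to identify this image explicitly. Since $\rho(\alpha)=\rho(\beta)=1$ and by \eqref{eq:conditions-trivial-alpha-delta-commuting} the representatives $\rho(\tau_1),\rho(\tau_2),\rho(\tau_3),\rho(\tau_7)$ are trivial while $\rho(\tau_6)=\rho(\tau_5)$, the image of $\rho$ is exactly the subgroup $K' \subseteq K$ generated by $\rho(\gamma), \rho(\delta), \rho(\tau_4), \rho(\tau_5), \rho(\tau_8)$. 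Thus the problem reduces to understanding the adjoint action of $K'$ on $\mathfrak{so}(3)$.

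The key observation, essentially already recorded after \eqref{eq:basis-of-so3-lie-algebra}, is that with respect to the basis $u_1, u_2, u_3$ each of $a, b, c$ acts diagonally, fixing exactly one $u_i$ and sending the other two to their negatives. In particular the three lines $\langle u_i\rangle$ are preserved by every element of $K$, and a nonzero fixed vector exists if and only if some $u_i$ is fixed by every element of $K'$.

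From here I would split into two directions. For ($\Leftarrow$), assume two of the five listed elements have distinct non-trivial images $x,y\in\{a,b,c\}$. Since $K$ is the Klein four-group, $xy$ is the remaining non-trivial element, so $\{x,y,xy\}=\{a,b,c\}$; for each $i\in\{1,2,3\}$ at least one (in fact exactly two) of $a,b,c$ act as $-1$ on $u_i$, so no $u_i$ is fixed by all of $K'$, and irreducibility follows. For ($\Rightarrow$), I would argue the contrapositive: if no two of the five representatives are distinct and non-trivial, then either they are all trivial or they all coincide with a single $x\in\{a,b,c\}$, so $K'\subseteq\{1,x\}$; in either case one of the $u_i$ is fixed by every element of $K'$, producing a nonzero element of $\mathfrak{so}(3)$ fixed by the image, and hence reducibility.

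I do not expect any real obstacle; the proof is a finite combinatorial check on the Klein four-group once the image has been pinned down. The only step requiring mild care is the bookkeeping for the translations: one should verify that the constraints collected in Remark \ref{remark:all-are-involutions} and \eqref{eq:conditions-trivial-alpha-delta-commuting} genuinely force $\rho(\pi_1^{\rm{orb}}(T^8/\Gamma))$ to coincide with the subgroup generated by $\rho(\gamma),\rho(\delta),\rho(\tau_4),\rho(\tau_5),\rho(\tau_8)$, so that nothing else contributes fixed vectors to $\mathfrak{so}(3)$.
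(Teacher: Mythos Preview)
Your proposal is correct and follows essentially the same approach as the paper: both reduce to the criterion of Lemma~\ref{lemma:rep-theory-criterion-for-rigidity} and then observe that a nonzero fixed vector in $\mathfrak{so}(3)$ exists precisely when the image of $\rho$ is contained in $\{1,x\}$ for a single involution $x$. Your version is slightly more explicit in pinning down the image via \eqref{eq:conditions-trivial-alpha-delta-commuting} and in using the diagonal action on the basis $u_1,u_2,u_3$, whereas the paper phrases the same check geometrically in terms of rotation axes, but the substance is identical.
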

\begin{proof}
    By \cref{lemma:rep-theory-criterion-for-rigidity}, we need to study which elements of $\mathfrak{g}=\mathfrak{so}(3)$ are fixed by the adjoint representation of $\rho$.
    
    If the image of $\gamma,\delta,\tau_4,\tau_5,\tau_8$ consists of a single rotation, then the elements of $\mathfrak{so}(3)$ infinitesimally generating that same rotation will be fixed by the adjoint representation and the connection will not be irreducible.
    
    Conversely, if the image of $\gamma,\delta,\tau_4,\tau_5,\tau_8$ includes rotations around two different axes then no non-trivial elements of $\mathfrak{so}(3)$ can be fixed by the adjoint action and the connection will be irreducible.
\end{proof}

\begin{lemma}
\label{lemma:so3-unobstructed}
    The connection associated with the representation $\rho$ is unobstructed if and only if one of the following is true:
    \begin{itemize}
        \item Two elements within $\tau_4,\tau_5,\tau_8$ have different non-trivial representatives.
        \item The image of $\tau_4,\tau_5,\tau_8$ has a single non-trivial representative and agrees with the image of $\gamma,\delta$.
    \end{itemize}
\end{lemma}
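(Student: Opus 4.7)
The plan is to apply Lemma~\ref{lemma:rep-theory-criterion-for-rigidity}, so that unobstructedness becomes equivalent to the non-existence of a non-zero vector in $\Lambda^2_7(\R^8) \otimes \mathfrak{so}(3)$ fixed by the combined holonomy–adjoint action of $\pi_1^{\mathrm{orb}}(T^8/\Gamma)$. Since $\rho(\alpha)=\rho(\beta)=1$, Lemma~\ref{lemma:holonomy-action-on-Lambda2} lets me restrict attention to the three-dimensional subspace $V=\langle e_1, e_2, e_3\rangle \subset \Lambda^2_7(\R^8)$ that is fixed by the holonomy of $\alpha$ and $\beta$. The translations $\tau_i$ act trivially on $V$ via holonomy, so only their adjoint action on $\mathfrak{so}(3)$ is relevant. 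The problem therefore reduces to computing the fixed subspace of $V \otimes F$ under $\gamma$ and $\delta$, where $F \subset \mathfrak{so}(3)$ denotes the intersection of the adjoint fixed subspaces of $\rho(\tau_4), \rho(\tau_5), \rho(\tau_8)$.

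In the basis $\{u_1,u_2,u_3\}$ of \eqref{eq:basis-of-so3-lie-algebra}, the adjoint action of a non-trivial involution $R \in SO(3)$ fixes $u_k$ when $R$ is the rotation of angle $\pi$ around axis $k$, and negates $u_k$ otherwise. Hence there are three possibilities for $F$: (i) if two of $\rho(\tau_4), \rho(\tau_5), \rho(\tau_8)$ are non-trivial rotations around different axes, then $F=0$; (ii) if the non-trivial images among $\rho(\tau_4), \rho(\tau_5), \rho(\tau_8)$ all coincide with the rotation around a single axis $k_0$ and at least one of them is non-trivial, then $F=\langle u_{k_0}\rangle$; (iii) if all three are trivial, then $F=\mathfrak{so}(3)$. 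Case (i) immediately gives unobstructedness, matching the first alternative of the lemma.

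For cases (ii) and (iii) I exploit the sign tables from Lemma~\ref{lemma:holonomy-action-on-Lambda2}: the holonomy action of $\gamma$ on $(e_1,e_2,e_3)$ has signs $(+,-,-)$ and that of $\delta$ has signs $(-,+,-)$, while the adjoint action of $\rho(\gamma)$ and $\rho(\delta)$ on each $u_k$ is $\pm 1$ determined as above. A basis element $e_i \otimes u_k \in V \otimes F$ is fixed precisely when, for each of $\gamma$ and $\delta$, the holonomy sign on $e_i$ agrees with the adjoint sign on $u_k$. In case (iii), I will enumerate subcases according to whether $\rho(\gamma)$ or $\rho(\delta)$ is trivial and, if both are non-trivial, whether they rotate around the same or perpendicular axes; in each subcase I will exhibit at least one fixed basis element, showing that the connection is always obstructed (which is consistent with neither alternative of the lemma being satisfied). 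In case (ii), the analogous subcase check will show that $V \otimes \langle u_{k_0}\rangle$ admits a non-zero fixed vector if and only if at least one of $\rho(\gamma), \rho(\delta)$ is a non-trivial rotation around an axis different from $k_0$; equivalently, the connection is unobstructed exactly when every non-trivial image among $\rho(\gamma), \rho(\delta)$ equals the rotation around $k_0$, matching the second alternative of the lemma.

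The main obstacle is organising the case analysis cleanly: the statement involves five group elements, each with four possible images (trivial, or rotation around one of three axes), but the sign-matching rule together with Remark~\ref{remark:all-are-involutions} and the commutativity assumption on $\rho(\gamma), \rho(\delta)$ cuts this down to a short finite list of configurations whose fixed-subspace counts collate exactly into the two alternatives of the lemma.
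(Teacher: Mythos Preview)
Your proposal is correct and follows essentially the same route as the paper: both reduce via Lemma~\ref{lemma:rep-theory-criterion-for-rigidity} to fixed vectors in $\Lambda^2_7(\R^8)\otimes\mathfrak{so}(3)$, use that $\alpha,\beta$ force attention onto $V\otimes\mathfrak{so}(3)$ and that the $\tau_i$ further cut down the $\mathfrak{so}(3)$ factor to $F$, and then split into the same three cases according to $\dim F$. The only cosmetic difference is that the paper packages the sign-matching in your cases (ii) and (iii) as applications of Proposition~\ref{proposition:showing-connections-are-obstructed} (and, for the unobstructed subcase of (ii), the observation that $\alpha,\beta,\gamma,\delta$ have no common nonzero fixed vector in $\Lambda^2_7(\R^8)$), whereas you carry out the same computations directly against the sign table of Lemma~\ref{lemma:holonomy-action-on-Lambda2}.
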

\begin{proof}
    Suppose first that there exist at least two elements within $\tau_4,\tau_5,\tau_8$ with different non-trivial representatives. The holonomy representation of $\tau_i$ acts trivially on $\Lambda^2_7(\R^8)$ for all $i=1,\dots,8$, so by \cref{lemma:rep-theory-criterion-for-rigidity} it is enough to show that no non-trivial elements of $\mathfrak{so}(3)$ are fixed by the adjoint action of all $\rho(\tau_i)$. This is immediate arguing analogously to the proof of \cref{lemma:so3-irreducible}.

    Suppose now that $\tau_4,\tau_5,\tau_8$ have precisely one non-trivial representative. In this case, there exists one element $g\in\mathfrak{so}(3)$ such that the action of $\lbrace\tau_i\rbrace_{i=1,\dots,8}$ on $\Lambda^2_7(\R^8)\otimes\mathfrak{so}(3)$ fixes precisely $\Lambda^2_7(\R^8)\otimes \langle g\rangle$. One then has to check whether $\alpha,\beta,\gamma,\delta$ fix any of these elements.
    
    If the image of $\gamma,\delta$ agrees with the image of $\tau_4,\tau_5,\tau_8$, then $\alpha,\beta,\gamma,\delta$ act as the identity on $g$. However, $0$ is the only common fixed vector of $\alpha,\beta,\gamma,\delta$ in $\Lambda^2(\R^8)$, and so in particular in $\Lambda^2_7(\R^8)$.
    Therefore, there are no non-zero elements in $\Lambda^2_7(\R^8)\otimes\mathfrak{so}(3)$ fixed by $\alpha,\beta,\gamma,$ and $\delta$.
    By \cref{lemma:rep-theory-criterion-for-rigidity}, the representation is unobstructed.

    Assume the image of $\gamma,\delta$ does not agree with the image of $\tau_4,\tau_5,\tau_8$. This means that at least one of $\rho(\gamma)$ or $\rho(\delta)$ acts as $-1$ on $g$ via the adjoint action, whereas the other element acts either trivially or as $-1$. By \cref{proposition:showing-connections-are-obstructed}, we conclude that the connection is obstructed.
    
    It only remains to study the case where the image of $\tau_4,\tau_5,\tau_8$ is trivial. Since the image of $\gamma,\delta$ is non-trivial, there exists some $g\in\mathfrak{so}(3)$ for which the adjoint action of either $\rho(\gamma)$ or $\rho(\delta)$ acts with a minus sign. Once again, \cref{proposition:showing-connections-are-obstructed} shows that the connection is obstructed.
\end{proof}

Combining \cref{remark:for-us-H1-is-zero}, \cref{lemma:so3-irreducible} and \cref{lemma:so3-unobstructed} we obtain the following proposition.

\begin{proposition}
    Let $\rho: \pi_1^{\rm{orb}}(T^8/\Gamma) \rightarrow SO(3)$ be a representation of the orbifold fundamental group with image a $\mathbb{Z}_2\times\mathbb{Z}_2$ subgroup of commuting involutions of $SO(3)$, satisfying the condition \eqref{eq:conditions-trivial-alpha-delta-commuting}, with $\rho(\alpha)=\rho(\beta)=1$, at least one of $\rho(\gamma)$ or $\rho(\delta)$ non-trivial, and with $\rho(\tau_4),\rho(\tau_5),\rho(\tau_8)$ generating the whole image of $\rho$. Then, the flat connection associated with this representation is irreducible, infinitesimally rigid, and unobstructed.
\end{proposition}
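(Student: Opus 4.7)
The plan is to verify this proposition as a direct corollary of the three preceding results: Remark \ref{remark:for-us-H1-is-zero} for infinitesimal rigidity, Lemma \ref{lemma:so3-irreducible} for irreducibility, and Lemma \ref{lemma:so3-unobstructed} for unobstructedness. The content of the proposition lies in observing that all hypotheses of these three results are automatically implied by the stated assumptions. Since the image of $\rho$ is a full $\mathbb{Z}_2 \times \mathbb{Z}_2$ subgroup that is generated by $\rho(\tau_4), \rho(\tau_5), \rho(\tau_8)$, the images of these three translations cannot all coincide, so at least two of them must be distinct non-trivial involutions. This is the key combinatorial observation that powers everything else.

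First, I would invoke Remark \ref{remark:for-us-H1-is-zero}: because $\rho(\alpha) = \rho(\beta) = 1$, the holonomy representation of $\alpha$ and $\beta$ has no non-zero common fixed vector on $\mathbb{R}^8$ (since $\alpha$ acts as $-1$ on the first four coordinates and $\beta$ acts as $-1$ on the last four). By \Cref{lemma:rep-theory-criterion-for-rigidity} applied to $\mathbb{R}^8 \otimes \mathfrak{g}$, the connection is infinitesimally rigid regardless of the remaining data.

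Next, I would apply Lemma \ref{lemma:so3-irreducible}: the existence of two elements among $\tau_4, \tau_5, \tau_8$ with different non-trivial images (which follows from them generating the whole $\mathbb{Z}_2 \times \mathbb{Z}_2$) yields two elements among $\gamma, \delta, \tau_4, \tau_5, \tau_8$ with distinct non-trivial representatives, so the connection is irreducible. Finally, the same observation that two of $\tau_4, \tau_5, \tau_8$ already have distinct non-trivial images puts us in the first case of Lemma \ref{lemma:so3-unobstructed}, giving unobstructedness.

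I do not anticipate a serious obstacle here; the proof is essentially bookkeeping, and the only subtlety is making explicit that ``$\tau_4, \tau_5, \tau_8$ generate a $\mathbb{Z}_2 \times \mathbb{Z}_2$'' forces at least two distinct non-trivial values among them (a single value would generate only $\mathbb{Z}_2$, while the identity contributes nothing to generation). Once that is said, the three hypotheses of the cited lemmas and remark fire in sequence and the proposition follows.
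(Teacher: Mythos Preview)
Your proposal is correct and matches the paper's approach exactly: the paper states this proposition with the one-line justification ``Combining \cref{remark:for-us-H1-is-zero}, \cref{lemma:so3-irreducible} and \cref{lemma:so3-unobstructed} we obtain the following proposition,'' and your write-up simply unpacks that sentence, including the small combinatorial observation that generating $\mathbb{Z}_2\times\mathbb{Z}_2$ forces two of $\rho(\tau_4),\rho(\tau_5),\rho(\tau_8)$ to be distinct and non-trivial.
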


\begin{remark}
    \label{remark:105-examples}
    By a change of basis of $SO(3)$ we can always consider the Klein four-group $K$ as the image of $\rho$ and gauge transformations map the generators $a,b,c$ to each other. 
    The number of connections is the number of ways of assigning $1,a,b,c$ to $\rho(\gamma),\rho(\delta),\rho(\tau_4),\rho(\tau_5),\rho(\tau_8)$ in such a way that at least two out of $\rho(\tau_4),\rho(\tau_5),\rho(\tau_8)$ are different and not equal to $1$, we do not have $\rho(\gamma)=\rho(\delta)=1$, where then some will be gauge equivalent.
    
    Up to gauge equivalence, there are 105 possibilities to do this.
    Here is how to arrive at this number by counting representations $\rho: \pi_1^{orb}(T^8/\Gamma) \rightarrow K$:
    we get the number of representations satisfying the two criteria by taking the number of all representations minus the number of representations violating one of the criteria, and adding back in the representations which violate more than one criterion, because we have multiply counted them.
    \begin{align*}
        &|\{ \rho \}|
        \\
        -&
        |\{ \rho: \rho(\gamma)=\rho(\delta)=1\}|
        \\
        -&
        |\{ \rho: \rho(\tau_4), \rho(\tau_5),\rho(\tau_8) \in \{1,a\} \}|
        \\
        -&
        |\{ \rho: \rho(\tau_4), \rho(\tau_5),\rho(\tau_8) \in \{1,b\} \}|
        \\
        -&
        |\{ \rho: \rho(\tau_4), \rho(\tau_5),\rho(\tau_8) \in \{1,c\} \}|
        \\
        +&
        |\{ \rho: \rho(\gamma)=\rho(\delta)=1,
        \rho(\tau_4), \rho(\tau_5),\rho(\tau_8) \in \{1,a\} \}|  
        \\
        +&
        |\{ \rho: \rho(\gamma)=\rho(\delta)=1,
        \rho(\tau_4), \rho(\tau_5),\rho(\tau_8) \in \{1,b\} \}|
        \\
        +&
        |\{ \rho: \rho(\gamma)=\rho(\delta)=1,
        \rho(\tau_4), \rho(\tau_5),\rho(\tau_8) \in \{1,c\} \}| 
        \\
        +&
        2 \cdot |\{ \rho: \rho(\tau_4)=\rho(\tau_5)=\rho(\tau_8)=1 \}|
        \\
        -& 2 \cdot |\{ \rho: \rho \equiv 1\}|
        \\
        =&\,
        4^5-4^3-4^2 \cdot 2^3-4^2 \cdot 2^3-4^2 \cdot 2^3+2^3+2^3+2^3+2 \cdot 4^2-2 
        = 630
    \end{align*}
    Before the first equality, we subtracted $2 \cdot |\{ \rho: \rho \equiv 1\}|$ because we counted the trivial representation $1-4+5=2$ times and should have counted it zero times, because it is not an allowed solution.
    Each of these representations takes at least two non-identity values in $K$, so their stabiliser in
    \[
        \text{Aut} (K)
        =
        \{
            f: K \rightarrow K:
            f(1)=1, \text{ $f$ bijective}
        \}
        \cong
        S_3
    \]
    is trivial.
    Thus, these $630$ representations are made up of $105$ orbits of representations under the action of $\text{Aut}(K)$.
    That is, up to gauge equivalence, we have $105$ flat connections to which our construction can be applied.
    An explicit list of $105$ representations which gives rise to flat connections that are not gauge equivalent is given in Appendix \ref{section:list-of-so3-representations-on-t8-gamma}.
    By \cref{proposition:existence-of-ALE-side-connection}, this gives rise to $105$ new four-parameter families of $Spin(7)$-instantons.
\end{remark}

\begin{remark}
    If the representatives of $\gamma$ and $\delta$ are not assumed to commute, further representations of the orbifold group can be obtained. Unfortunately, they can not be used for our construction and we show this in Appendix \ref{appendix:so3noncommutativeconnections}.
\end{remark}

\begin{remark}
    Our construction cannot produce irreducible $Spin(7)$-instantons with structure group $G=SU(2)$.
    Here is why:
    for the resulting connection to be irreducible, we would need either (a) the flat connection $\theta$ to be irreducible, or we would need (b) at least one of the glued-in connections to have full holonomy $SU(2)$ and still be rigid.
    
    Irreducible, flat $SU(2)$-connections on $T^8/\Gamma$ do not exist.
    The conditions $\alpha^2=\beta^2=\gamma^2=\delta^2=1$ force $\rho(\alpha),\rho(\beta),\rho(\gamma),\rho(\delta) \in \{  \pm 1\}$.
    The conditions $[\tau_i, \tau_j]=1$ mean that $\tau_j \in Z(\tau_i)$, and if $\tau_i \neq 1$, then its centralizer satisfies $Z(\tau_i) \simeq U(1) \subset SU(2)$.
    I.e. $\rho$ takes values in $U(1) \subset SU(2)$ and the corresponding connection is reducible.
    (One can even argue that the image of $\rho$ is contained in $\{\pm 1\}$, but this is not needed here.)

    Note however that all these connections are unobstructed. The condition $\rho(\alpha),\rho(\beta),\rho(\gamma),\rho(\delta) \in \{  \pm 1\}$ implies that the adjoint action of these elements is trivial on $\mathfrak{su}(2)$. Since 0 is the only element of $\Lambda^2_7(\mathbb{R}^8)$ fixed by the holonomy action of $\alpha,\beta,\gamma,\delta$, we find there are no non-trivial elements of $\Lambda^2_7(\mathbb{R}^8)\tensor\mathfrak{su}(2)$ fixed by $\rho$ and the associated connection is unobstructed.

    On the other hand, non-flat instantons with structure group $SU(2)$ on Eguchi--Hanson space always come in a positive dimensional moduli space, see \cite[Equation 5.10]{Bian}.
\end{remark}

\subsection{Other cases among SO(n)'s}
\label{subsection:other-structure-groups}

We now comment on other choices of $G=SO(n)$ with $n\geq 4$. Analogously to the case $G={SO}(3)$, we consider a representation $\rho: \pi_1^{\rm{orb}}(T^8/\Gamma) \rightarrow {SO}(n)$ with $\rho(\alpha)=\rho(\beta)=1$ and such that $\rho(\gamma)$ and $\rho(\delta)$ commute---with at least one of them non-trivial. Imposing the condition \eqref{eq:conditions-trivial-alpha-delta-commuting}, we have that $\rho$ is completely determined by the images of $\gamma,\delta,\tau_4,\tau_5,\tau_8$ and we know from \Cref{remark:all-are-involutions} that the image of the representation is a subgroup of commuting involutions of $SO(n)$.

\begin{remark}
\label{remark:involutions-diagonal-matrices}
    The maximal subgroup of commuting involutions of $SO(n)$ is of the form $\mathbb{Z}_2\times\overset{n-1}{\cdots}\times\mathbb{Z}_2$ and consists of all diagonal matrices with an even number of $-1$ entries and $1$ in the remaining entries. To see this, note that a set of $SO(n)$ matrices commutes with itself if and only if they are simultaneously diagonalisable, which means that (in a convenient basis) a subgroup of commuting involutions is represented by diagonal matrices. The involution condition restricts the diagonal entries to $\pm 1$, and the determinant being $+1$ forces the number of $-1$ entries to be even.
\end{remark}

Recall that our method to produce $Spin(7)$-instantons requires the existence of flat connections which are simultaneously irreducible and unobstructed. 

\begin{remark}
    The commutativity assumption for $\rho(\gamma)$ and $\rho(\delta)$ is necessary, otherwise the associated flat connection is obstructed. See \Cref{appendix:non-commutative} for a proof of this fact. 
\end{remark}

Inspecting \Cref{lemma:so3-irreducible} and \Cref{lemma:so3-unobstructed} reveals a necessary condition for the existence of irreducible unobstructed connections, which we now extend to all $G=SO(n)$ with $n\geq 3$:

\begin{proposition}
\label{proposition:no-SOn-instantons}
    Consider a representation $\rho: \pi_1^{\rm{orb}}(T^8/\Gamma) \rightarrow SO(n)$ with $n\geq 3$ such that $\rho(\alpha)=\rho(\beta)=1$, the representatives of $\gamma$ and $\delta$ commute and such that $\lbrace\rho(\tau_i)\rbrace_{i=1,\dots,8}$ leaves a plane invariant. The associated connection cannot be simultaneously irreducible and unobstructed.
\end{proposition}

\begin{proof}
    Consider an element $g\in\mathfrak{so}(n)$  corresponding to an infinitesimal generator of rotations on the plane left invariant by $\lbrace\rho(\tau_i)\rbrace_{i=1,\dots,8}$. By an analogous argument to the $SO(3)$ case, the adjoint action of $\lbrace\rho(\tau_i)\rbrace_{i=1,\dots,8}$ fixes $g$.

    If the adjoint action of both $\gamma$ and $\delta$ is trivial on $g$, then $g\in\mathfrak{so}(n)$ is fixed by the adjoint representation of $\rho$ and by \cref{lemma:rep-theory-criterion-for-rigidity} the associated connection is not irreducible.

    On the other hand, suppose that the associated connection is irreducible. This means that at least one of $\rho(\gamma),\rho(\delta)$ must act non-trivially on $g$ and not preserve the plane. This means the plane contains eigenvectors with eigenvalues $1$ and $-1$ for that involution. It is then immediate to check that the adjoint action of the involution must send $g$ to $-g$, and using \cref{proposition:showing-connections-are-obstructed} we conclude that the connection is obstructed.
\end{proof}

Since \Cref{remark:all-are-involutions} says that $\lbrace\rho(\tau_i)\rbrace_{i=1,\dots,8}$ is generated by $\rho(\tau_4),\rho(\tau_5),\rho(\tau_8)$, we have found that a necessary condition for our construction to apply to $SO(n)$ is the existence of three commuting involutions not leaving a plane fixed.

\begin{proposition}
    Our method cannot be used to produce $Spin(7)$-instantons with structure group $SO(n)$ for $n=6$ or $n\geq 9$.
\end{proposition}

\begin{proof}
    It is immediate to check that, by dimensional reasons, three commuting involutions always leave a plane fixed in these cases.
\end{proof}

The necessary condition in \Cref{proposition:no-SOn-instantons} turns out to be sufficient:

\begin{proposition}
\label{proposition:correct-SOn-instantons}
    Consider a representation $\rho: \pi_1^{\rm{orb}}(T^8/\Gamma) \rightarrow SO(n)$ with $n\geq 3$ such that $\rho(\alpha)=\rho(\beta)=1$ and the representatives of $\gamma$ and $\delta$ commute, with one of them at least non-trivial. The associated flat connection is irreducible, infinitesimally rigid and unobstructed if and only if $\lbrace\rho(\tau_i)\rbrace_{i=1,\dots,8}$ does not leave a plane invariant.
\end{proposition}

\begin{proof}
    The infinitesimally rigid condition was explained in \Cref{remark:for-us-H1-is-zero}. \Cref{proposition:no-SOn-instantons} gives one of the implications, so it only remains to assume that $\lbrace\rho(\tau_i)\rbrace_{i=1,\dots,8}$ does not leave a plane invariant. 

    The elements of $\mathfrak{g}=\mathfrak{so}(n)$ are infinitesimal generators of rotations on planes, and since no planes are fixed by $\lbrace\rho(\tau_i)\rbrace_{i=1,\dots,8}$ its adjoint action also fixes no elements of $\mathfrak{so}(n)$. By \Cref{lemma:rep-theory-criterion-for-rigidity} the associated connection will be irreducible.

    On the other hand, recall the holonomy representation of $\tau_i$ on $\Lambda^2_7(\R^8)$ is trivial for all $i=1,\dots,8$. Since we just argued that the adjoint action does not fix any elements of $\mathfrak{so}(n)$, we conclude that no elements in $\Lambda^2_7(\R^8)\otimes\mathfrak{so}(n)$ are fixed by the induced representation of $\lbrace\tau_i\rbrace_{i=1,\dots,8}$. \Cref{lemma:rep-theory-criterion-for-rigidity} implies the associated connection is unobstructed, concluding the proof.
\end{proof}

We now study the remaining $SO(n)$ cases. Although it is possible to identify the conditions for irreducibility and unobstructedness independently case by case as we did in \Cref{lemma:so3-irreducible} and \Cref{lemma:so3-unobstructed}, we simply rely on \Cref{proposition:correct-SOn-instantons} to identify the connections amenable to our construction.

\paragraph{SO(4).} Any two non-trivial commuting involutions of $SO(4)$ generate a Klein subgroup of $\mathbb{Z}_2\times\mathbb{Z}_2\times\mathbb{Z}_2$. However, there are two types of Klein subgroups in $SO(4)$. The first type is given by subgroups which include the central inversion $-1$. Such a subgroup actually leaves two orthogonal planes invariant. An example of this is the subgroup generated by diagonal matrices with entries $(-1,-1,1,1)$ and $(1,1,-1,-1)$.

The second type of subgroups is characterized by the fact that any two non-trivial involutions act as reflections on planes that intersect at a line, implying that no planes are fixed by such a subgroup. For example, the group generated by $a=(1,-1,-1,1)$ and $b=(-1,1,-1,1)$ is of this type. We thus conclude:

\begin{proposition}
    A representation $\rho: \pi_1^{\rm{orb}}(T^8/\Gamma) \rightarrow SO(4)$ with the assumptions above is valid for our construction if the image of $\lbrace\tau_i\rbrace_{i=1,\dots,8}$ contains a Klein subgroup without the central inversion $-1$.
\end{proposition}

    After taking into account possible gauge equivalences, there are $882$ valid representations. A possible argument goes as follows: we want to count all possible representations up to gauge equivalence, so we give a set of rules to gauge transform any representation to a canonical configuration and we count how many of these configurations we have. Given a representation $\rho$, we choose a basis of $SO(4)$ such that the image of $\rho$ are diagonal involutions.
    
    First we change basis to gauge fix $\rho(\tau_4),\rho(\tau_5),\rho(\tau_8)$, in that order. Our gauge fixing criteria is to impose that the first Klein subgroup not including $-1$ we encounter is generated by $a$ and $b$ as above. In this way, if $\rho(\tau_4)=1$ or $\rho(\tau_4)=-1$ we must change basis so that $\rho(\tau_5)=a$, $\rho(\tau_8)=b$. Otherwise, we change basis so that $\rho(\tau_4)=a$ and we can have that $\rho(\tau_5)$ is equal to $1,-1,a,-a$ (meaning that we must change basis so that $\rho(\tau_8)=b$) or we change basis so that $\rho(\tau_5)=b$. Exploring all possibilities in this way we obtain $14$ different canonical configurations for $\rho(\tau_4),\rho(\tau_5),\rho(\tau_8)$.

    Now that the basis is fixed, so are $\rho(\gamma),\rho(\delta)$ and we just need to count how many possible values they can take. All possible pairs of $(\rho(\gamma),\rho(\delta))$ are allowed except for $(1,1)$, that means there are $8^2-1=63$ different configurations. Combining the number of canonical configurations for $\rho(\tau_4),\rho(\tau_5),\rho(\tau_8)$ and $\rho(\gamma),\rho(\delta)$ we obtain $14\cdot 63=882$ different possible representations which are not gauge equivalent. The explicit list can be found in \Cref{section:list-of-so4-representations-on-t8-gamma}.
    By \cref{proposition:existence-of-ALE-side-connection}, we obtain $882$ additional four-parameter families of $Spin(7)$-instantons.

\paragraph{SO(5).} For $SO(5)$, all Klein subgroups leave planes invariant so we focus on $\mathbb{Z}_2\times\mathbb{Z}_2\times\mathbb{Z}_2$ subgroups of commuting involutions instead. There are two types: the first one is characterized by acting trivially on a 1-dimensional subspace, and correspond to the commuting involutions of a 4-dimensional subspace. They are generated by 3 involutions which are non-trivial along 3 different planes that intersect on the same line. These subgroups preserve no planes. An example is the subgroup generated by diagonal matrices with entries
    \begin{equation*}
        (-1,-1,1,1,1), \quad (-1,1,-1,1,1), \quad (-1,1,1,-1,1).
    \end{equation*}
The second type acts non-trivially on the full 5-dimensional space. They are again generated by 3 involutions which are non-trivial along 3 different planes, and for dimensional reasons one of the planes is orthogonal to the other two. As a result, each of these subgroups preserves a 2-dimensional subspace. An example of such a subgroup is generated by diagonal matrices with entries
    \begin{equation*}
        (-1,-1,1,1,1), \quad (-1,1,-1,1,1), \quad (1,1,1,-1,-1).
    \end{equation*}
\begin{proposition}
    A representation $\rho: \pi_1^{\rm{orb}}(T^8/\Gamma) \rightarrow SO(5)$ with the assumptions above is valid for our construction if the image of $\lbrace\tau_i\rbrace_{i=1,\dots,8}$ is a $\mathbb{Z}_2\times\mathbb{Z}_2\times\mathbb{Z}_2$ subgroup acting trivially on a 1-dimensional subspace.
\end{proposition}

Up to gauge equivalence, there are $1785$ such representations. We count them as in the $SO(4)$ case: the first step is to change basis to gauge fix $\rho(\tau_4),\rho(\tau_5),\rho(\tau_8)$ and the $\mathbb{Z}_2\times\mathbb{Z}_2\times\mathbb{Z}_2$ subgroup they generate. Taking into account that this subgroup can be generated by 3 involutions acting non-trivially on different planes, or by 2 such involutions and another one acting non-trivially on a 4-dimensional subspace, we obtain 7 possible configurations.

Now, each element in the pair $(\rho(\gamma),\rho(\delta))$ can be equal to any of the $16$ elements of $(\mathbb{Z}_2)^4$, with the only restriction of $(1,1)$ not being allowed. This gives $16^2-1=255$ configurations, which combined with the previous ones result in $7\cdot 255=1785$ non-equivalent representations. Thus, we find $1785$ new four-parameter families of $Spin(7)$-instantons thanks to \cref{proposition:existence-of-ALE-side-connection}.

\paragraph{SO(7).} Up to gauge equivalence, there is a single subgroup of $SO(7)$ generated by three commuting involutions that leaves no planes fixed:
\begin{proposition}
    A representation $\rho: \pi_1^{\rm{orb}}(T^8/\Gamma) \rightarrow SO(7)$ with the assumptions above is valid for our construction if $\rho(\tau_4),\rho(\tau_5),\rho(\tau_8)$ are gauge equivalent to the diagonal matrices with entries
    \begin{equation*}
        (-1,-1,-1,-1,1,1,1), \quad (-1,-1,1,1,-1,-1,1), \quad (1,-1,-1,1,1,-1,-1).
    \end{equation*}
\end{proposition}
We can construct $4095$ of these representations, up to gauge equivalence. First, we gauge fix $\rho(\tau_4),\rho(\tau_5),\rho(\tau_8)$ to the form above, which uniquely fixes each coordinate, and we then count $64$ possible choices for each of $\rho(\gamma)$ and $\rho(\delta)$---but we do not allow both to be trivial simultaneously. This gives $64^2-1=4095$ inequivalent connections, and by \Cref{proposition:existence-of-ALE-side-connection} they produce $4095$ new four-parameter families of $Spin(7)$-instantons.

\paragraph{SO(8).} Exactly as in the $SO(7)$ case, there is---up to gauge equivalence---a single subgroup of $SO(8)$ generated by three commuting involutions that leaves no planes fixed:
\begin{proposition}
    A representation $\rho: \pi_1^{\rm{orb}}(T^8/\Gamma) \rightarrow SO(8)$ with the assumptions above is valid for our construction if $\rho(\tau_4),\rho(\tau_5),\rho(\tau_8)$ are gauge equivalent to the diagonal matrices with entries
    \begin{equation*}
        (-1,-1,-1,-1,1,1,1,1), \quad (-1,-1,1,1,-1,-1,1,1), \quad (1,-1,-1,1,1,-1,-1,1).
    \end{equation*}
\end{proposition}
Counting up to gauge equivalence as in the previous cases, we obtain $128^2-1=16383$ possibilities. By \cref{proposition:existence-of-ALE-side-connection} we obtain $16383$ new four-parameter families of $Spin(7)$-instantons.

\appendix

\section{ALE spaces and anti-self-dual instantons}
\label{sec:IALE}

We begin by fixing some notation for hyperkähler Asymptotically Locally Euclidean (ALE) spaces, using definitions from \cite[Section 7.2]{Joyc5}.

\begin{definition}
    \label{definition:ALE}
    Let $G$ be a finite subgroup of $Sp(1)$, and let $(\hat{\omega}_1, \hat{\omega}_2, \hat{\omega}_3, \hat{g})$ be the Euclidean hyperkähler structure and $r: \mathbb{H}/G \rightarrow [0,\infty)$ the ratdius function of $\mathbb{H}/G$.
    We say that a hyperkähler $4$-manifold $(X, \omega_1, \omega_2, \omega_3, g)$ is \emph{Asymptotically Locally Euclidean}, or \emph{ALE}, and asymptotic to $\mathbb{H}/G$, if there exists a compact subset $S \subset X$ and a map $\rho: X \setminus S \rightarrow \mathbb{H}/G$ that is a diffeomorphism between $X \setminus S$ and $\{x \in \mathbb{H}/G: r(x)>R\}$ for some $R>0$ such that
    \begin{align*}
        \hat{\nabla}^k(\rho_* (g)-\hat{g})
        =
        O(r^{-4-k})
        \quad
        \text{ and }
        \quad
        \hat{\nabla} (\pi_*(\omega_j)-\hat{\omega}_j)
        =
        O(r^{-4-k})
    \end{align*}
    as $r \rightarrow \infty$, for $j=1,2,3$ and $k \geq 0$, where $\hat{\nabla}$ is the Levi-Civita connection of $\hat{g}$.
\end{definition}

Hyperkähler ALE spaces were classified by Kronheimer in \cite{Kro1,Kro2}.
We only use the simplest of Hyperkähler ALE spaces, namely, the Eguchi--Hanson space $X$ asymptotic to $\C^2/\{\pi 1\}$.
We need the following properties of $X$:

\begin{proposition}
    \label{proposition:eguchi-hanson}
    There exist a Hyperkähler $4$-manifold $(X, \omega_1, \omega_2, \omega_3)$ and a map $\rho:X \rightarrow \mathbb{H}/\{\pm 1\}$ such that for all $t>0$, the Hyperkähler triple $t^2 \omega_1, t^2 \omega_2, t^2 \omega_3 \in \Omega^2(X)$ is a Hyperkähler ALE manifold asymptotic to $\mathbb{H}/\{\pm 1\}$ in the sense of \cref{definition:ALE} with respect to the map $\rho_t (x)=t \cdot \rho(x)$.
    Furthermore,
    \begin{align}
        \label{equation:rescaled-EH-estimate}
        \hat{\nabla}(\pi_*(t^2 \omega_j)-\hat{\omega}_j)
        =
        t^4 O(r^{-4-k}).
    \end{align}
\end{proposition}

Existence of Eguchi--Hanson space is classical, and the estimate \cref{equation:rescaled-EH-estimate} is an easy consequence.
The consequence of \cref{equation:rescaled-EH-estimate} for glued geometric structures can be found in \cite[Section 2.1]{Walp3} and \cite[Proposition 13.4.3]{Joyc5}. 

The ASD instantons on ALE spaces are well understood due to the intensive studies by Nakajima \cite{Naka2}, Kronheimer--Nakajima \cite{KrNa}, Gocho--Nakajima \cite{GoNa} and others. 
Walpuski gives an excellent overview on them in \cite[Section 2.2]{Walp3}. 
For the reader's convenience, we list some facts which we use in this paper. The following are taken from \cite[Section 2.2]{Walp3}. 

Let $\Gamma$ be a finite subgroup of $Sp(1)$, let $X$ be an ALE space asymptotic to $\C^2/\Gamma$ with projection $\rho:X \rightarrow \C^2/\Gamma$, and let $E$ be a $G$-bundle over $X$.

\begin{definition}
 A \emph{framing at infinity} of $E$ is a bundle isomorphism $\Phi:E_\infty|_U \rightarrow \rho_* E|_U$ where $E_\infty$ is a $G$-bundle over $(\C^2 \setminus \{0\})/\Gamma$ and $U$ is the complement of a compact neighbourhood of the singular point in $\C^2/\Gamma$.
\end{definition}

Let $\theta$ be a flat connection on a $G$-bundle $E_\infty$ over $(\C^2 \setminus \{0\})/\Gamma$.

\begin{definition}
 Let $\Phi: E_\infty|_U \rightarrow \rho_* E|_U$ be a framing at infinity of $E$.
 Then a connection $A \in \mathscr{A}(E)$ is called \emph{asymptotic to $\theta$ at rate $\delta$ with respect to $\Phi$} if
 \begin{align}
 \label{equation:framing-at-infinity-def}
  \nabla^k(\Phi^*A-\theta)(x)=O(|\rho(x)|^{\delta-k})
 \end{align}
 for all $k \geq 0$.
 Here $\nabla$ is the covariant derivative associated with $\theta$.
\end{definition}

We exhibit next two propositions from \cite{Walp3}.

\begin{proposition}[Proposition 2.36 in \cite{Walp3}]
\label{proposition:framed-asd-decay}
 Let $A \in \mathscr{A}(E)$ be a Yang--Mills connection on $E$ with finite energy, then there is a $G$-bundle $E_\infty$ over $(\C^2 \setminus \{0\})/\Gamma$  together with a flat connection $\theta$ and a framing $\Phi: E_\infty|_U \rightarrow \rho_* E|_U$ such that \cref{equation:framing-at-infinity-def} holds with $\delta = -3$.
\end{proposition}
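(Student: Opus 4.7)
The argument proceeds in three stages: (i) extract the asymptotic flat connection and an initial polynomial decay rate via conformal compactification and Uhlenbeck's removable singularity theorem; (ii) improve the decay rate to the optimal $\delta = -3$ via a bootstrap driven by indicial analysis on the asymptotic cone; (iii) upgrade to higher derivatives by scale-invariant elliptic regularity.

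First, I would exploit the conformal invariance of the Yang--Mills functional and of the Yang--Mills equation in dimension four. Near infinity, $X$ is isometric up to exponentially small corrections to the cone $(\R_+ \times S^3/\Gamma, dr^2 + r^2 g_{S^3/\Gamma})$. Under the Kelvin-type inversion $r \mapsto r^{-1}$, this cone is conformally equivalent to the complement of the tip in another cone over $S^3/\Gamma$, so that the ``point at infinity'' becomes a finite orbifold singularity. Since $\int_X |F_A|^2 < \infty$ and the $L^2$-norm of $F_A$ is conformally invariant in dimension $4$, the resulting connection has finite Yang--Mills energy on any small punctured ball around the new orbifold point. For sufficiently small balls, the energy is below Uhlenbeck's threshold, so Uhlenbeck's removable singularity theorem (see \cite[\S 2]{DoKr}) provides a gauge in which the connection extends continuously across the point and the bundle extends to a bundle $E_\infty \to (\C^2 \setminus \{0\})/\Gamma$ (actually to a bundle over $\C^2/\Gamma$). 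The value of the extended connection at the new point singularity is a flat reference connection, which we pull back to $\theta$ on $E_\infty$, defining the framing $\Phi$. This stage yields an initial decay $|\Phi^* A - \theta| = O(r^{\delta_0})$ for some $\delta_0 < 0$.

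Second, I would improve $\delta_0$ to $-3$. Write $a := \Phi^*A - \theta$ and impose the Coulomb gauge condition $d_\theta^* a = 0$ (which one can arrange by a gauge transformation tending to the identity at the appropriate rate, using the already-obtained decay). The Yang--Mills equation $d_A^* F_A = 0$ combined with the Coulomb condition yields
\begin{align*}
 \Delta_\theta a = Q(a, \nabla a),
\end{align*}
where $Q$ is quadratic in $a$ and its derivative. The key input is the indicial analysis of $\Delta_\theta$ on one-forms on the cone $\R_+ \times S^3/\Gamma$: separating variables and using the Hodge decomposition on the link gives a discrete set of admissible decay rates. The relevant rates are integers, and the slowest nontrivial decay for a harmonic one-form on $\R^4 \setminus \{0\}$ satisfying the Coulomb condition is $-3$ (achieved by one-forms of type $d(r^{-2})$ and their pullbacks to $S^3/\Gamma$). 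Between $\delta_0$ and $-3$ there are no admissible indicial roots, so a standard weighted elliptic bootstrap (converting $r = e^t$ to put the cone end into cylindrical form and applying Kondrat'ev-type estimates, cf.~the framework of \cite{KrNa}) improves the decay from $\delta_0$ to $-3$. At each iteration the nonlinearity $Q$ produces a term of rate twice the current decay, which is strictly better than the linear improvement, and so does not obstruct the process.

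Third, once $|a| = O(r^{-3})$ is known, the derivative bounds $|\nabla^k a| = O(r^{-3-k})$ follow from scale-invariant interior Schauder estimates: on a ball of radius $r/4$ around a point at distance $r$ from the tip, rescale by $r^{-1}$ to obtain a connection on a ball of unit size in near-Euclidean geometry, where $a$ has uniform $C^0$ norm of order $r^{-3}$; standard elliptic regularity applied to $\Delta_\theta a = Q(a, \nabla a)$ gives uniform $C^{k,\alpha}$ bounds on the unit ball, which after undoing the rescaling yield the stated estimates.

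The main obstacle is stage (ii): one must rule out the appearance of logarithmic terms and of obstructing harmonic sections at the critical rate $\delta = -3$. This is handled by the fact that any harmonic one-form on the cone that decays strictly faster than $r^{-3}$ and lies in Coulomb gauge must in fact vanish, combined with finite energy, which forces $a$ itself to lie in the $L^2$-range of $\Delta_\theta$ at infinity. The remaining arguments are routine but careful applications of the general theory of elliptic operators on manifolds with cylindrical ends.
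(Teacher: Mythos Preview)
The paper does not prove this proposition; it is quoted from \cite[Proposition 2.36]{Walp3}. Your Stage (i) via conformal compactification and Uhlenbeck's removable singularity theorem is exactly the standard argument (cf.\ \cite{Naka2} and the discussion for $\R^4$ in \cite{DoKr}), but you undersell what it yields: Uhlenbeck's theorem together with elliptic regularity for the Yang--Mills equation gives a \emph{smooth} extension across the orbifold point, not merely a continuous one, and this smoothness already delivers $\delta = -3$ directly. Once the inverted connection $A'$ extends smoothly to $y=0$, choose the framing so that the connection one-form $a' := A' - \theta'$ vanishes at $0$ (arrange this by a gauge transformation with prescribed $1$-jet at the origin); then $a'$ is smooth with $a'(0)=0$. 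Under the inversion $y = x/|x|^2$ one has $|y| = |x|^{-1}$ and $dy_i = O(|x|^{-2})\,dx$, so $|\Phi^*A - \theta| = O(|x|^{-3})$, and smoothness of $a'$ plus the chain rule give $|\nabla^k(\Phi^*A - \theta)| = O(|x|^{-3-k})$ for all $k$.

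Your Stages (ii) and (iii) are therefore unnecessary. The indicial claim in Stage (ii) is correct---the $r^{-2}$ mode for harmonic one-forms on $\R^4 \setminus \{0\}$ is killed by the Coulomb condition, leaving $r^{-3}$ as the slowest admissible decay---so the bootstrap would work, but it and the attendant worry about logarithmic terms at the critical rate are a detour that the conformal-compactification argument sidesteps entirely.
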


\begin{proposition}[Proposition 2.41 in \cite{Walp3}]
\label{proposition:ale-asd-kernel-cokernel}
 Let $A \in \mathscr{A}(E)$ be a finite energy ASD instanton on $E$.
 Then the following holds:
 \begin{enumerate}
  \item
  If $a \in \Ker \delta_A$ decays to zero at infinity, i.e., $\displaystyle{\lim_{ r \rightarrow \infty} \sup_{\rho(x)=r} |a|(x) =0}$, then $\nabla_A^k a= \mathcal{O}(|\rho|^{-3-k})$ for all $k \geq 0$.
  
  \item
  If $(\xi, \omega) \in \Ker \delta_A^*$ decays to zero at infinity, then $(\xi, \omega)=0$.
 \end{enumerate}
\end{proposition}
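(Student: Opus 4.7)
The plan is to base both parts on the Weitzenböck identities for the two second-order operators $\delta_A^*\delta_A$ and $\delta_A\delta_A^*$, and then exploit the ALE asymptotics of Proposition A.2 together with the hyperK\"{a}hler/ASD geometry of $X$.

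First I would set up the two Weitzenböck formulas. For $a \in \Omega^1(X, \Ad E)$,
\[
\delta_A^* \delta_A \, a = \nabla_A^* \nabla_A \, a + \Ric(a) + [F_A, a],
\]
and for $(\xi,\omega) \in (\Omega^0 \oplus \Omega^2_+)(X,\Ad E)$,
\[
\delta_A \delta_A^* (\xi,\omega) = \nabla_A^* \nabla_A (\xi,\omega) + R(\xi,\omega),
\]
where, crucially, the curvature term $R$ involves only the scalar curvature of $X$ and the self-dual part $F_A^+$ of the curvature of $A$. Since $X$ is a Ricci-flat (hyperK\"{a}hler) ALE four-manifold and $A$ is ASD, both the scalar curvature and $F_A^+$ vanish, so $\delta_A \delta_A^* = \nabla_A^* \nabla_A$ on $\Omega^0 \oplus \Omega^2_+$. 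This is the main structural input from ASD-ness.

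For part (1), from $\delta_A a = 0$ I get $\nabla_A^* \nabla_A \, a + [F_A, a] = 0$. By Proposition A.2 the connection $A$ is asymptotic to a flat connection $\theta$ on $\rho^*E_\infty$ at rate $-3$, so $|F_A| = O(|\rho|^{-4})$. Outside a sufficiently large compact set, the equation is therefore a small perturbation of the flat, uncoupled Laplacian $\Delta_{\C^2/\Gamma}$ acting on sections of $\rho^*E_\infty$ through the flat connection $\theta$. On this asymptotic model, separation of variables along the link $S^3/\Gamma$ decomposes solutions into homogeneous modes $r^\nu \cdot Y_\nu$ determined by eigensections of the link Laplacian; the slowest-decay mode compatible with $\delta_\theta a = 0$ (i.e.\ $d_\theta^* a = 0$, $d_\theta^+ a = 0$) and with $a \to 0$ at infinity is the $d$ of the fundamental solution $|x|^{-2}$, giving decay of order $|\rho|^{-3}$. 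I would then promote this ``indicial root" statement to the nonlinear setting via a standard barrier/weighted-Schauder argument on ALE spaces: comparing $|a|$ against $C|\rho|^{-3}$ on annular exhaustions, using the maximum principle for $\nabla_A^* \nabla_A + [F_A,\,\cdot\,]$, and using that the perturbation $[F_A,\,\cdot\,]$ is $O(|\rho|^{-4})$ and thus subcritical relative to the model operator. The derivative bounds $\nabla_A^k a = \mathcal{O}(|\rho|^{-3-k})$ follow from rescaled interior Schauder estimates on unit balls centered at points with $|\rho| = r$ and then rescaled back.

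For part (2), suppose $(\xi,\omega) \in \Ker \delta_A^*$ and $(\xi,\omega) \to 0$ at infinity. The Weitzenböck identity from the first paragraph gives $\nabla_A^* \nabla_A (\xi,\omega) = \delta_A \delta_A^* (\xi,\omega) = 0$, so $(\xi,\omega)$ is in the kernel of a non-negative second-order operator with zero curvature correction. Repeating the asymptotic/barrier analysis of part (1) for this equation yields that $(\xi,\omega)$ and $\nabla_A(\xi,\omega)$ decay at some polynomial rate, fast enough that the product $|(\xi,\omega)|\cdot|\nabla_A(\xi,\omega)|$ integrated over large spheres tends to zero. Choosing a cutoff $\chi_R$ supported in $\{|\rho| \leq 2R\}$ equal to $1$ on $\{|\rho| \leq R\}$ and integrating
\[
\int_X \chi_R^2 \, \langle \nabla_A^* \nabla_A (\xi,\omega), (\xi,\omega) \rangle \, \d\vol = 0
\]
by parts produces $\|\chi_R \nabla_A(\xi,\omega)\|_{L^2}^2 = -2 \int_X \chi_R \langle \d\chi_R \otimes (\xi,\omega), \nabla_A(\xi,\omega)\rangle$, and the right-hand side vanishes as $R \to \infty$ by the decay just obtained together with $|\d \chi_R| = O(1/R)$. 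Hence $\nabla_A (\xi,\omega) \equiv 0$, i.e.\ $(\xi,\omega)$ is $A$-parallel; a parallel section which tends to zero at infinity must vanish.

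The main obstacle is the barrier/indicial-root analysis in part (1): one needs to identify precisely which modes on the link $S^3/\Gamma$ are compatible with the ASD linearisation twisted by $\theta$, and to show that the $O(|\rho|^{-4})$ curvature perturbation does not create slower-decaying solutions. This is where a careful weighted-norm setup on the ALE end, in the spirit of Lockhart--McOwen, would do the work; part (2) is comparatively softer once the hyperK\"{a}hler Weitzenböck cancellation is invoked.
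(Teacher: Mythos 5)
The paper does not actually prove this statement; it is quoted verbatim from Walpuski's thesis and used as a black box, so there is no internal proof to compare against. That said, your proposal goes at it in essentially the way the cited source does, and it is correct in outline. A few remarks.

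For part (2), your Weitzenb\"{o}ck observation is exactly the crucial point: on a Ricci-flat four-manifold with $A$ ASD, the curvature endomorphism in $\delta_A \delta_A^* = \nabla_A^*\nabla_A + R$ involves only $\Scal$ and $F_A^+$, both of which vanish, so the equation is purely $\nabla_A^*\nabla_A(\xi,\omega)=0$. At that point you can finish more directly than you propose: $\Delta|(\xi,\omega)|^2 = 2|\nabla_A(\xi,\omega)|^2 \ge 0$, so $|(\xi,\omega)|^2$ is subharmonic on $X$; since it tends to $0$ at infinity, the maximum principle on the compact exhaustion $\{r\le R\}$ already forces it to vanish identically, without any preliminary decay-rate improvement or cut-off integration by parts. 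Your integration-by-parts route also works, but it needs a quantitative decay rate for $(\xi,\omega)$ and $\nabla_A(\xi,\omega)$ that you would have to establish first, which is harder than it needs to be.

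For part (1), your strategy -- Weitzenb\"{o}ck plus $|F_A|=O(|\rho|^{-4})$ (which does follow from \cref{proposition:framed-asd-decay}) plus an indicial-root/weighted-Fredholm decay improvement -- is the right and standard one. The genuine content, which your write-up asserts but does not verify, is that the twisted ASD deformation complex $\delta_\theta=(d_\theta^*,d_\theta^+)$ on $\R^4/\Gamma\setminus\{0\}$ has no indicial roots in $(-3,0)$. Because flat sections lift to $\Gamma$-equivariant trivial data on $\R^4\setminus\{0\}$, this reduces to the untwisted scalar problem: there are no degree $-1$ harmonic functions on $\R^4$; and the degree $-2$ harmonic one-forms $c_i r^{-2}\d x^i$ satisfy $d^*a = 2 c_i x_i r^{-4}\not\equiv 0$ unless $c=0$, so they are never in $\Ker\delta$. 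The first decaying rate actually realised is $-3$, by $\d(r^{-2})$, which \emph{is} closed and coclosed. Spelling this out is the one gap worth filling; the rest of your argument (weighted Schauder for the derivative bounds, small $O(r^{-4})$ perturbation not disturbing indicial roots) is routine ALE analysis in the spirit of Lockhart--McOwen, exactly as you indicate.
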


We use the following explicit example of an ASD instanton on Eguchi--Hanson space by Gocho--Nakajima:

\begin{proposition}[Section 2 in \cite{GoNa}, see also Lemma 7.1 in \cite{KrNa}]
    \label{proposition:eguchi-hanson-ASD-example}
    There exists a $U(1)$-bundle on Eguchi--Hanson space carrying a connection that
    \begin{enumerate}
        \item 
        has finite energy,

        \item 
        is an infinitesimally rigid ASD instanton,

        \item 
        is asymptotic to the flat connection defined by the representation $\rho: \mathbb{Z}_2 \rightarrow U(1)$ given by $\rho(-1)=-1$.
    \end{enumerate}
\end{proposition}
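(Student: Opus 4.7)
The plan is to exhibit the bundle and connection explicitly, following the construction of Gocho--Nakajima \cite{GoNa} and Kronheimer--Nakajima \cite{KrNa}, then verify the three listed properties directly.

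First I would set up the bundle. The Eguchi--Hanson space $X$ is simply connected with $H^2(X,\Z) \cong \Z$, generated by the class of the exceptional $\mathbb{CP}^1$. I would take $L$ to be the line bundle whose restriction to this $\mathbb{CP}^1$ is $\mathcal{O}(-1)$ (for instance the tautological bundle on $\mathbb{CP}^1 \subset T^*\mathbb{CP}^1 = X$, extended over $X$ via pullback along the obvious retraction). The ALE end of $X$ is diffeomorphic to $(\C^2 \setminus B)/\{\pm 1\}$, and $\pi_1$ of this end is $\Z_2$. Since $L$ has non-integral Chern class on the $S^2$ link at infinity (indeed $c_1(L)$ pairs to a half-integer with the hemisphere capping off the generator of the link), $L$ restricted to the end is a flat $U(1)$-bundle with monodromy equal to $-1 \in U(1)$ around the generator of $\pi_1$. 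This takes care of property (3).

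Second, I would put the ASD connection on $L$. The Eguchi--Hanson metric is hyper-K\"{a}hler, so the bundle $\Lambda^2_-(X)$ has a global frame by the three K\"{a}hler forms; moreover, $X$ carries a unique (up to scale) $L^2$-harmonic $2$-form $\omega_X$, which is anti-self-dual, see for example \cite[\S 3]{GoNa}. I would let $A$ be any connection on $L$ whose curvature is a fixed real multiple of $\omega_X$ (such a connection exists because $\omega_X$ is $L^2$, hence integrable to an $L^2_1$ connection with $F = c\,\omega_X$, and the multiple can be fixed so that $[F_A/2\pi] = c_1(L)$ in de Rham cohomology). By construction $A$ is ASD, so property (2) holds as far as the instanton equation is concerned, and finite energy property (1) follows from $\omega_X \in L^2$. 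Property (3) regarding the asymptotics of the connection (rather than merely the bundle) follows from the $r^{-4}$ decay of $|\omega_X|$ and standard Coulomb-gauge asymptotic analysis as in \cref{proposition:framed-asd-decay}.

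Third, I would verify infinitesimal rigidity. Since $G = U(1)$ is abelian, the adjoint bundle is trivial and the deformation operator $\delta_A$ reduces to $a \mapsto (\d^* a, \d^+ a)$ on ordinary real-valued $1$-forms. By \cref{proposition:ale-asd-kernel-cokernel}(i), any decaying element $a \in \Ker \delta_A$ satisfies $a = \mathcal{O}(r^{-3})$ and in particular lies in $L^2$. The Weitzenb\"{o}ck formula on an ALE Ricci-flat $4$-manifold gives that such $a$ is harmonic, and then $L^2$-Hodge theory on ALE spaces identifies the space of $L^2$-harmonic $1$-forms with the image of $H^1_c(X;\R) \to H^1(X;\R)$. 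Since $X$ is simply connected and has the homotopy type of $S^2$, both cohomologies vanish and $a=0$. This establishes $\Ker \delta_A = 0$, i.e., infinitesimal rigidity.

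The main obstacle I anticipate is not conceptual but bookkeeping: one must choose the normalisation of $c_1(L)$ carefully so that the monodromy of the flat limit connection at infinity is precisely $-1 \in U(1)$, and one must check that the $L^2$ harmonic form $\omega_X$ can be written as a curvature of a connection on the chosen $L$, i.e., the cohomology class matches. Once that is done, the remaining steps are routine given the tools from \cite{KrNa,GoNa,Walp3} already cited in the excerpt.
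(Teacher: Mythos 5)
The paper itself gives no proof of this proposition; it simply imports it as a quoted result from Gocho--Nakajima \cite{GoNa} and Kronheimer--Nakajima \cite{KrNa}. Your sketch correctly reconstructs the standard argument behind those references (line bundle with unit pairing against the exceptional sphere, hence nontrivial $\Z_2$ monodromy on the $\mathbb{RP}^3$ end; connection with curvature a multiple of the unique $L^2$-harmonic anti-self-dual $2$-form; rigidity from the triviality of $\Ad L$ for abelian $G$ together with the vanishing of decaying harmonic $1$-forms on the simply connected Ricci-flat ALE space), and I see no gaps.
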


\section{Flat SO(3)-connections via non-commutative representations}
\label{appendix:so3noncommutativeconnections}

In \Cref{subsection:SO3-examples}, we have described flat orbifold connections with structure group $SO(3)$ coming from a commutative representation. In this Appendix, we give a geometric description of representations that do not commute and explain why the associated flat connections cannot be used in our construction.

Note that if we assume that the image of the representation is an abelian subgroup, it is not hard to show that the image must be a subgroup of commuting involutions of $SO(3)$. The representation is then completely determined by the conditions \eqref{eq:conditions-trivial-alpha-delta-commuting} and the images of $\alpha,\beta,\gamma,\delta,\tau_4,\tau_5,\tau_8$, which must be rotations of angle $\pi$ along orthogonal axes. 

We now drop the assumption of commutativity. Conditions 4 and 5 in \Cref{lemma:conditions} imply that, for every $i\in\lbrace1,\dots,8\rbrace$, $\rho(\tau_i)$ is equal to its inverse after conjugation by a rotation. Since a rotation and its conjugate have the same angle, this implies $\rho(\tau_i)$ and its inverse are rotations with the same angle. This is only possible when the angle is $\pi$ (or $0$) and therefore $\rho(\tau_i)$ is an involution. Thus, the image of $\lbrace\tau_i\rbrace_{i=1,\dots,8}$ must still be a subgroup of commuting involutions of $SO(3)$.

Suppose now that the images of two elements $\lbrace\alpha_1,\alpha_2\rbrace\in\lbrace\alpha,\beta,\gamma,\delta\rbrace$ do not commute. Note that by condition 3 in \Cref{lemma:conditions} we can not choose $\lbrace\alpha_1,\alpha_2\rbrace\in\lbrace\alpha,\beta\rbrace$. Moreover, combining this condition with the fact that the images of all the elements are involutions we conclude that $\rho(\alpha_1)\rho(\alpha_2)$ must square to a rotation of angle $\pi$. This can only happen if $\rho(\alpha_1)$ and $\rho(\alpha_2)$ are rotations of angle $\pi$ whose product is a rotation of angle $\pi/2$ or $3\pi/2$. Given two rotations of angles $\theta_1$ and $\theta_2$ around axes $\hat{v}_1$ and $\hat{v}_2$, their product is a rotation of angle $\theta$ around an axis $\hat{v}$ given by the formulas
\begin{align*}
    \cos{\frac{\theta}{2}}=&\cos{\frac{\theta_1}{2}}\cos{\frac{\theta_2}{2}}-\sin{\frac{\theta_1}{2}}\sin{\frac{\theta_2}{2}}\hat{v}_1\cdot\hat{v}_2 \, , \\
    \sin{\frac{\theta}{2}} \hat{v}=&\sin{\frac{\theta_1}{2}}\cos{\frac{\theta_2}{2}}\hat{v}_1+\cos{\frac{\theta_1}{2}}\sin{\frac{\theta_2}{2}}\hat{v}_2+\sin{\frac{\theta_1}{2}}\sin{\frac{\theta_2}{2}}\hat{v}_1\times\hat{v}_2 \, .    
\end{align*}
These formulas are easily obtained using quaternions to represent the rotations. It is then not hard to see that $\rho(\alpha_1)\rho(\alpha_2)$ is a rotation of angle $\pi/2$ or $3\pi/2$ when $\rho(\alpha_1)$ and $\rho(\alpha_2)$ are at an angle of $\pi/4$ or $3\pi/4$. Furthermore, $\rho(\alpha_1)\rho(\alpha_2)$ is a rotation around the axis perpendicular to the axes of both $\rho(\alpha_1)$ and $\rho(\alpha_2)$.

\Cref{lemma:conditions} forces the involutions $\rho(\tau_i)$ to commute with $\rho(\alpha_1)$ and $\rho(\alpha_2)$. As a result, each $\rho(\tau_i)$ is equal to either 1 or a rotation of angle $\pi$ around the axis of rotation perpendicular to those of $\rho(\alpha_1)$ and $\rho(\alpha_2)$. In fact, \Cref{lemma:conditions} imposes further constraints on the $\rho(\tau_i)$ as well as on the $\rho(\alpha),\rho(\beta),\rho(\gamma),\rho(\delta)$, and the following proposition shows that these representations can not be used to produce $Spin(7)$-instantons in our construction.

\begin{proposition}
    Given a non-commutative representation $\rho: \pi_1^{\rm{orb}}(T^8/\Gamma) \rightarrow SO(3)$ with $\rho(\alpha)=\rho(\beta)=1$, the associated connection is always obstructed.
\end{proposition}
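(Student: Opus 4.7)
The plan is to apply Proposition \ref{proposition:showing-connections-are-obstructed}: I will exhibit an element $g \in \mathfrak{so}(3)$ that is fixed by the adjoint action of every $\rho(\tau_i)$ and on which at least one of $\rho(\gamma), \rho(\delta)$ acts as $-1$ while the other acts as $\pm 1$.

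First, I would collect the structural information about $\rho$ from the discussion preceding the proposition. Since $\rho(\alpha) = \rho(\beta) = 1$ and $[\alpha,\beta] = 1$, the non-commutativity in $\rho$ must come from the pair $\rho(\gamma), \rho(\delta)$. As argued in the appendix text, both $\rho(\gamma)$ and $\rho(\delta)$ are rotations by $\pi$ (involutions in $SO(3)$), their axes $\hat{v}_1, \hat{v}_2$ meet at angle $\pi/4$ or $3\pi/4$, and their product is a rotation of angle $\pi/2$ or $3\pi/2$ about the unit vector $\hat{v}$ perpendicular to both $\hat{v}_1$ and $\hat{v}_2$. Conditions 6 and 7 of Lemma \ref{lemma:conditions} force each $\rho(\tau_i)$ to commute with both $\rho(\gamma)$ and $\rho(\delta)$; since any non-trivial involution in $SO(3)$ commuting with two distinct rotations by $\pi$ must rotate about the unique axis perpendicular to both, we conclude
\[
    \rho(\tau_i) \in \{1, R_\pi(\hat{v})\}
    \quad \text{for all } i \in \{1, \dots, 8\}.
\]

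Next, let $g \in \mathfrak{so}(3)$ be the infinitesimal generator of rotations about $\hat{v}$. Because every $\rho(\tau_i)$ is a rotation about $\hat{v}$ (or the identity), the adjoint action of $\rho(\tau_i)$ fixes $g$. For $\rho(\gamma) = R_\pi(\hat{v}_1)$, conjugation sends a rotation about $\hat{v}$ to a rotation about $R_\pi(\hat{v}_1)(\hat{v}) = -\hat{v}$, so $\mathrm{Ad}(\rho(\gamma))(g) = -g$; the same argument gives $\mathrm{Ad}(\rho(\delta))(g) = -g$. Thus $g$ satisfies all the hypotheses of Proposition \ref{proposition:showing-connections-are-obstructed}.

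Applying Proposition \ref{proposition:showing-connections-are-obstructed} then yields a non-zero element of $\Lambda^2_7(\R^8) \otimes \mathfrak{so}(3)$ fixed by the whole representation, and hence by Lemma \ref{lemma:rep-theory-criterion-for-rigidity} the associated flat connection is obstructed. The main obstacle in making this rigorous is verifying that the commutation constraints truly force every $\rho(\tau_i)$ to be a rotation about $\hat{v}$ (and not, for instance, some other element of $SO(3)$ commuting with both $\rho(\gamma)$ and $\rho(\delta)$); this reduces to the elementary fact that the centraliser in $SO(3)$ of two rotations by $\pi$ about distinct axes is generated by the rotations about their common perpendicular, once one excludes the identity.
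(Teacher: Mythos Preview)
Your argument is correct and follows essentially the same route as the paper: identify the axis $\hat v$ perpendicular to the axes of $\rho(\gamma)$ and $\rho(\delta)$, observe that every $\rho(\tau_i)$ lies in $\{1,R_\pi(\hat v)\}$ and hence fixes the infinitesimal generator $g$ of rotations about $\hat v$, check that $\rho(\gamma)$ and $\rho(\delta)$ both act on $g$ by $-1$, and then invoke Proposition~\ref{proposition:showing-connections-are-obstructed}. One small remark: conditions~6 and~7 of Lemma~\ref{lemma:conditions} alone give $\gamma\tau_i=\tau_i^{\pm 1}\gamma$ rather than commutation; you implicitly use conditions~4 and~5 (together with $\rho(\alpha)=\rho(\beta)=1$) to first conclude that each $\rho(\tau_i)$ is an involution, after which $\tau_i^{-1}=\tau_i$ and commutation follows---exactly as the appendix text you cite establishes.
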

\begin{proof}
    We require $\rho(\alpha)=\rho(\beta)=1$, so the representation must satisfy \eqref{eq:conditions-alpha-beta-trivial}. By our previous discussion, if $\rho(\gamma)$ and $\rho(\delta)$ are non-trivial and non-commuting, their commutator must be a rotation of angle $\pi$ along an axis orthogonal to the axes of rotation of $\rho(\gamma)$ and $\rho(\delta)$. We denote such rotation by $r$, and we must have each $\rho(\tau_i)$ equal to either $1$ or $r$. In fact, from \Cref{lemma:conditions} we have $[\rho(\gamma),\rho(\delta)]=\rho(\tau_1)$, so $\rho(\tau_1)=r$ whereas we can take $\rho(\tau_4)$, $\rho(\tau_5)$ and $\rho(\tau_8)$ equal to either $1$ or $r$. 
    
    The image of the $\tau_i$ is just $\lbrace 1,r\rbrace$, so there exists an element $g\in\mathfrak{so}(3)$ fixed by the action of the $\rho(\tau_i)$ for which $\rho(\gamma)$ and $\rho(\delta)$ act with a  global minus sign. By \Cref{proposition:showing-connections-are-obstructed}, the associated connection is always obstructed.
\end{proof}

\section{The non-commutative case}
\label{appendix:non-commutative}

The following proposition shows that, without the commutativity assumption for the representatives of $\gamma$ and $\delta$, the associated flat connection is obstructed and cannot produce new $Spin(7)$-instantons via our method. 
An alternative proof in the case $G=SO(3)$ can be found in \Cref{appendix:so3noncommutativeconnections}.

\begin{proposition}
\label{proposition:no-so(n)-non-commutative-instantons}
    Consider a representation $\rho: \pi_1^{\rm{orb}}(T^8/\Gamma) \rightarrow SO(n)$ with $n\geq 3$ such that $\rho(\alpha)=\rho(\beta)=1$ and such that the representatives of $\gamma$ and $\delta$ do not commute. 
    Then the associated connection is obstructed.
\end{proposition}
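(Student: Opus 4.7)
The plan is to exhibit a non-zero element $h \in \mathfrak{so}(n)$ that is fixed by the adjoint action of every $\rho(\tau_i)$ and on which both $\rho(\gamma)$ and $\rho(\delta)$ act as $-1$; then \cref{proposition:showing-connections-are-obstructed} immediately yields the claim.

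Set $g := \rho(\gamma)$ and $d := \rho(\delta)$. Since both are involutions in $SO(n)$, they satisfy $g^T = g^{-1} = g$ and $d^T = d$, so the matrix commutator $h := gd - dg \in \mathfrak{gl}(n,\mathbb{R})$ is antisymmetric and hence lies in $\mathfrak{so}(n)$. The non-commutativity hypothesis ensures $h \neq 0$. A direct computation using $g^2 = d^2 = 1$ gives
\[
    \mathrm{Ad}(g)\,h \;=\; g(gd-dg)g \;=\; dg - gd \;=\; -h,
\]
and likewise $\mathrm{Ad}(d)\,h = -h$.

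It remains to verify that $\mathrm{Ad}(\rho(\tau_i))\,h = h$ for every $i \in \{1,\dots,8\}$. The relations in \cref{lemma:conditions} combined with $\rho(\alpha) = \rho(\beta) = 1$ force $\rho(\tau_i)^2 = 1$, so $\rho(\tau_i)^{-1} = \rho(\tau_i)$. Substituting this into the relations $\gamma\tau_i = \tau_i^{\pm 1}\gamma$ and $\delta\tau_i = \tau_i^{\pm 1}\delta$ shows that $\rho(\tau_i)$ commutes with both $g$ and $d$, and therefore with $h = gd - dg$. Applying \cref{proposition:showing-connections-are-obstructed} to $h$ completes the argument.

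The only mild subtlety is keeping the commutation relations between the $\rho(\tau_i)$'s and $g, d$ straight; everything else is elementary linear algebra on matrices. The guiding observation is the general fact that, for any two involutions $g, d$ in an orthogonal group, the matrix commutator $gd - dg$ automatically lies in the Lie algebra, is non-zero precisely when $g$ and $d$ fail to commute, and admits both $g$ and $d$ as $(-1)$-eigenvectors under the adjoint action — exactly the hypotheses required by \cref{proposition:showing-connections-are-obstructed}.
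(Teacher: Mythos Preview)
Your proof is correct and takes a genuinely different, more elegant route than the paper.

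The paper proceeds by choosing a basis in which all $\rho(\tau_i)$ and $\rho(\gamma)$ are diagonal, then analyses the block structure of $\rho(\delta)$ relative to the eigenspaces of $\rho(\tau_1)$ (using the relation $[\rho(\gamma),\rho(\delta)]=\rho(\tau_1)$), and from this block decomposition writes down an explicit element of $\mathfrak{so}(n)$ with the required sign behaviour. Your argument bypasses all of that: the matrix commutator $h = gd - dg$ is automatically skew-symmetric because involutions in $SO(n)$ are symmetric, it is non-zero precisely by hypothesis, and the adjoint action of $g$ and $d$ on it is $-1$ by a one-line computation using $g^2=d^2=1$. The fact that each $\rho(\tau_i)$ commutes with both $g$ and $d$ (once the $\rho(\tau_i)$ are known to be involutions via conditions 4 and 5 of \cref{lemma:conditions}) then makes $h$ invariant under all $\rho(\tau_i)$, and \cref{proposition:showing-connections-are-obstructed} applies directly. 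Your approach is basis-free, shorter, and highlights the general mechanism; the paper's approach is more explicit but correspondingly more laborious.
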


\begin{proof}
    By \Cref{remark:all-are-involutions}, requiring $\rho(\alpha)=\rho(\beta)=1$ implies that $\lbrace\rho(\tau_i)\rbrace_{i=1,\dots,8}$ are commuting involutions that commute with both the involutions $\rho(\gamma)$ and $\rho(\delta)$. Since $\rho(\gamma)$ and $\rho(\delta)$ do not commute, by \Cref{lemma:conditions} $\rho(\tau_1)$ can not be equal to the identity and we have that
    \begin{equation}
    \label{eq:non-commutation-relation}
        [\rho(\gamma),\rho(\delta)]=\rho(\tau_1) \implies  \rho(\gamma)\rho(\delta)=\rho(\tau_1)\rho(\delta)\rho(\gamma) \, .
    \end{equation}
    Following \Cref{remark:involutions-diagonal-matrices}, consider a basis of $\mathbb{R}^n$ where $\lbrace\rho(\tau_i)\rbrace_{i=1,\dots,8}$ and $\rho(\gamma)$ are given by diagonal matrices with $\pm1$ entries, and such that $\rho(\tau_1)$ has $-1$ in the first $p$ diagonal entries and $1$ in the remaining $n-p$, where $0<p\leq n$. Reordering the first $p$ basis elements if necessary, we can further assume that $\rho(\gamma)$ has $-1$ in the first $q$ diagonal entries and $1$ in the following $p-q$, where $0\leq q\leq p$.

    Since $\rho(\delta)$ commutes with $\rho(\tau_1)$, it must be the case that $\rho(\delta)$ in the basis we are considering is a block diagonal matrix with two blocks $M_1$ and $M_2$ of sizes $p\times p$ and $(n-p)\times(n-p)$, matching the entries with $-1$ or $+1$ in $\rho(\tau_1)$. Note $M_1$ and $M_2$ are orthogonal matrices (with the same determinant) and they are also involutions. This implies they are symmetric matrices.
    \begin{equation}
        \rho(\delta)=\left(\begin{array}{c|c}
  M_1 & 0 \\
\hline
  0 & M_2
\end{array}\right) \, .
    \end{equation}
    We focus now on the subspace $\mathbb{R}^p$ of $\mathbb{R}^n$ given by the first $p$ coordinates. Equation \eqref{eq:non-commutation-relation} can be rewritten as $\rho(\gamma)\vert_{\mathbb{R}^p}\cdot M_1= -M_1\cdot \rho(\gamma)\vert_{\mathbb{R}^p}$, where $\rho(\gamma)\vert_{\mathbb{R}^p}$ is a block diagonal matrix with a $q\times q$ block equal to minus the identity and a $(p-q)\times(p-q)$ block equal to the identity. Decomposing $M_1$ as a symmetric block matrix matching the block structure of $\rho(\gamma)\vert_{\mathbb{R}^p}$, this equation implies that the diagonal blocks of $M_1$ must vanish. Furthermore, since $M_1$ is not singular it must be the case that the off-diagonal blocks are square matrices, this means $q=p-q$ (which forces $p=2q$) and we can write
    \begin{equation}
        M_1=\left(\begin{array}{c|c}
  0 & B \\
\hline
  B^T & 0
\end{array}\right) \, ,
    \end{equation}
    where $B$ is a $q\times q$ orthogonal matrix (not necessarily involutive). Consider the following element $g\in\mathfrak{so}(n)$, which we write as a block matrix splitting $n$ into $(q,q,n-2q)$
    \begin{equation}
        g=\left(\begin{array}{c|c|c}
  0 & -B & 0 \\
\hline
  B^T & 0 & 0 \\
\hline
  0 & 0 & 0
\end{array}\right) \, ,
    \end{equation}
    We now study the adjoint action of the elements of $\pi_1^{\rm{orb}}(T^8/\Gamma)$ on $g$. By direct computation, we see $\rho(\delta)$ acts on $g$ with a global minus sign. Since $\lbrace\rho(\tau_i)\rbrace_{i=1,\dots,8}$ and $\rho(\gamma)$ are diagonal matrices, we can restrict to the subspace $\mathbb{R}^p=\mathbb{R}^{2q}$ where $g$ is non-trivial to determine the action. Using the block decomposition of $\rho(\gamma)\vert_{\mathbb{R}^p}$ and $\rho(\tau_1)\vert_{\mathbb{R}^p}$, we find by direct computation that $\rho(\gamma)$ acts on $g$ with a global minus sign whereas $\rho(\tau_1)$ acts as the identity.

    Now, each $\rho(\tau_i)\vert_{\mathbb{R}^p}$ is a diagonal matrix that commutes with $M_1$. Equivalently, $M_1$ is fixed under conjugation by $\rho(\tau_i)\vert_{\mathbb{R}^p}$ and it is immediate to see that so is $g\vert_{\mathbb{R}^p}$. Therefore, $\rho(\tau_i)$ acts trivially on $g$ via the adjoint action.

    The element $g$ satisfies the hypothesis of \Cref{proposition:showing-connections-are-obstructed} and we conclude that the associated connection is obstructed.
\end{proof}

\section{List of SO(3)-representations on $T^8/\Gamma$}
\label{section:list-of-so3-representations-on-t8-gamma}

The following is a list of $105$ representations $\rho: \pi_1^{orb}(T^8/\Gamma)$ explained in \cref{remark:105-examples}.
We adopt the notation $(x_1,x_2,x_3,x_4,x_5)$ for the uniquely defined representation $\rho: \pi_1^{orb}(T^8/\Gamma) \rightarrow SO(3)$ satisfying $\rho(\gamma)=x_1, \rho(\delta)=x_2, \rho(\tau_4)=x_3, \rho(\tau_5)=x_4, \rho(\tau_8)=x_5$:

{\tiny 
$\{$(1, a, 1, a, b), (1, a, 1, b, a), (1, a, a, 1, b), (1, a, a, a, b), (1, a, a, b, 1), (1, a, a, b, a), (1, a, a, b, b), (1, a, b, 1, a), (1, a, b, a, 1), (1, a, b, a, a), (1, a, b, a, b), (1, a, b, b, a), (1, c, 1, a, b), (1, c, a, 1, b), (1, c, a, a, b), (1, c, a, b, 1), (1, c, a, b, a), (1, c, a, b, b), (1, c, a, b, c), (1, c, a, c, b), (1, c, c, a, b), (a, 1, a, b, a), (a, 1, a, b, b), (a, 1, b, a, 1), (a, 1, b, a, a), (a, 1, b, a, b), (a, 1, b, b, a), (a, a, a, b, a), (a, a, a, b, b), (a, a, b, a, 1), (a, a, b, a, a), (a, a, b, a, b), (a, a, b, b, a), (a, b, 1, b, a), (a, b, a, 1, b), (a, b, a, a, b), (a, b, a, b, 1), (a, b, a, b, a), (a, b, a, b, b), (a, b, b, 1, a), (a, b, b, a, 1), (a, b, b, a, a), (a, b, b, a, b), (a, b, b, b, a), (c, 1, 1, a, b), (c, 1, 1, b, c), (c, 1, 1, c, b), (c, 1, a, 1, b), (c, 1, a, a, b), (c, 1, a, b, 1), (c, 1, a, b, a), (c, 1, a, b, b), (c, 1, a, b, c), (c, 1, a, c, b), (c, 1, b, 1, c), (c, 1, c, 1, b), (c, 1, c, a, b), (c, 1, c, b, 1), (c, 1, c, c, b), (c, a, 1, a, b), (c, a, 1, b, a), (c, a, 1, b, c), (c, a, 1, c, b), (c, a, a, 1, b), (c, a, a, a, b), (c, a, a, b, 1), (c, a, a, b, a), (c, a, a, b, b), (c, a, a, b, c), (c, a, a, c, b), (c, a, b, 1, a), (c, a, b, 1, c), (c, a, b, a, 1), (c, a, b, a, a), (c, a, b, a, b), (c, a, b, a, c), (c, a, b, b, a), (c, a, b, b, c), (c, a, b, c, 1), (c, a, b, c, a), (c, a, b, c, b), (c, a, b, c, c), (c, a, c, 1, b), (c, a, c, a, b), (c, a, c, b, 1), (c, a, c, b, a), (c, a, c, b, b), (c, a, c, b, c), (c, a, c, c, b), (c, b, 1, c, b), (c, c, 1, a, b), (c, c, 1, b, c), (c, c, 1, c, b), (c, c, a, 1, b), (c, c, a, a, b), (c, c, a, b, 1), (c, c, a, b, a), (c, c, a, b, b), (c, c, a, b, c), (c, c, a, c, b), (c, c, b, 1, c), (c, c, c, 1, b), (c, c, c, a, b), (c, c, c, b, 1), (c, c, c, c, b) $\}$\par}

\section{List of SO(4)-representations on $T^8/\Gamma$}
\label{section:list-of-so4-representations-on-t8-gamma}

%The following is a list of $882$ representations $\rho: \pi_1^{\rm{orb}}(T^8/\Gamma)$ explained in \cref{remark:instanton-count-so4}.
The following is a list of $882$ representations $\rho: \pi_1^{\rm{orb}}(T^8/\Gamma)$ explained in the $SO(4)$ examples in \Cref{subsection:other-structure-groups}. Here $c$ is the diagonal matrix with entries $(-1,-1,1,1)$.
We adopt the notation $(x_1,x_2,x_3,x_4,x_5)$ for the uniquely defined representation $\rho: \pi_1^{\rm{orb}}(T^8/\Gamma) \rightarrow SO(4)$ satisfying $\rho(\gamma)=x_1, \rho(\delta)=x_2, \rho(\tau_4)=x_3, \rho(\tau_5)=x_4, \rho(\tau_8)=x_5$:

{\tiny 
$\{$(1, a, 1, a, b), (1, a, 1, b, a), (1, a, 1, -a, -b), (1, a, 1, -b, -a), (1, a, a, 1, b), (1, a, a, a, b), (1, a, a, b, 1), (1, a, a, b, a), (1, a, a, b, b), (1, a, a, b, -1), (1, a, a, b, -a), (1, a, a, b, -b), (1, a, a, -1, b), (1, a, a, -a, b), (1, a, b, 1, a), (1, a, b, a, 1), (1, a, b, a, a), (1, a, b, a, b), (1, a, b, a, -1), (1, a, b, a, -a), (1, a, b, a, -b), (1, a, b, b, a), (1, a, b, -1, a), (1, a, b, -a, a), (1, a, b, -a, -b), (1, a, b, -b, a), (1, a, b, -b, -a), (1, a, -1, a, b), (1, a, -1, b, a), (1, a, -1, -a, -b), (1, a, -1, -b, -a), (1, a, -a, 1, -b), (1, a, -a, a, b), (1, a, -a, b, a), (1, a, -a, b, -b), (1, a, -a, -1, -b), (1, a, -a, -a, -b), (1, a, -a, -b, 1), (1, a, -a, -b, -1), (1, a, -a, -b, -a), (1, a, -a, -b, -b), (1, a, -b, 1, -a), (1, a, -b, -1, -a), (1, a, -b, -a, 1), (1, a, -b, -a, -1), (1, a, -b, -a, -a), (1, a, -b, -a, -b), (1, a, -b, -b, -a), (1, c, 1, a, b), (1, c, a, a, b), (1, c, a, b, a), (1, c, a, b, c), (1, c, a, b, -1), (1, c, a, b, -a), (1, c, a, b, -c), (1, c, a, c, -b), (1, c, a, -1, b), (1, c, a, -a, b), (1, c, a, -a, -b), (1, c, a, -b, b), (1, c, a, -b, c), (1, c, a, -b, -a), (1, c, a, -c, b), (1, c, c, a, b), (1, c, c, -a, b), (1, c, -1, a, b), (1, c, -a, 1, -b), (1, c, -a, b, -c), (1, c, -a, -b, 1), (1, c, -a, -b, -b), (1, c, -c, a, b), (1, c, -c, a, -b), (1, -1, 1, a, b), (1, -1, a, 1, b), (1, -1, a, a, b), (1, -1, a, b, 1), (1, -1, a, b, a), (1, -1, a, b, b), (1, -1, a, b, -1), (1, -1, a, b, -a), (1, -1, a, b, -b), (1, -1, a, -1, b), (1, -1, a, -a, b), (1, -1, c, a, b), (1, -1, c, -a, b), (1, -1, -1, b, a), (1, -c, 1, a, b), (1, -c, a, a, b), (1, -c, a, b, 1), (1, -c, a, b, a), (1, -c, a, b, b), (1, -c, a, b, -1), (1, -c, a, c, b), (1, -c, a, -1, b), (1, -c, a, -c, -b), (1, -c, -1, a, b), (1, -c, -a, b, -b), (1, -c, -b, 1, -a), (a, 1, 1, b, a), (a, 1, 1, -a, -b), (a, 1, 1, -b, -a), (a, 1, a, 1, b), (a, 1, a, a, b), (a, 1, a, b, 1), (a, 1, a, b, a), (a, 1, a, b, b), (a, 1, a, b, -1), (a, 1, a, b, -a), (a, 1, a, b, -b), (a, 1, a, -1, b), (a, 1, a, -a, b), (a, 1, b, 1, a), (a, 1, b, a, 1), (a, 1, b, a, a), (a, 1, b, a, b), (a, 1, b, a, -1), (a, 1, b, a, -a), (a, 1, b, a, -b), (a, 1, b, b, a), (a, 1, b, -1, a), (a, 1, b, -a, a), (a, 1, b, -a, -b), (a, 1, b, -b, a), (a, 1, b, -b, -a), (a, 1, -1, a, b), (a, 1, -1, b, a), (a, 1, -1, -a, -b), (a, 1, -1, -b, -a), (a, 1, -a, 1, -b), (a, 1, -a, a, b), (a, 1, -a, b, a), (a, 1, -a, b, -b), (a, 1, -a, -1, -b), (a, 1, -a, -a, -b), (a, 1, -a, -b, 1), (a, 1, -a, -b, -1), (a, 1, -a, -b, -a), (a, 1, -a, -b, -b), (a, 1, -b, 1, -a), (a, 1, -b, -1, -a), (a, 1, -b, -a, 1), (a, 1, -b, -a, -1), (a, 1, -b, -a, -a), (a, 1, -b, -a, -b), (a, 1, -b, -b, -a), (a, a, 1, a, b), (a, a, 1, b, a), (a, a, 1, -a, -b), (a, a, 1, -b, -a), (a, a, a, 1, b), (a, a, a, a, b), (a, a, a, b, 1), (a, a, a, b, a), (a, a, a, b, b), (a, a, a, b, -1), (a, a, a, b, -a), (a, a, a, b, -b), (a, a, a, -1, b), (a, a, a, -a, b), (a, a, b, 1, a), (a, a, b, a, 1), (a, a, b, a, a), (a, a, b, a, b), (a, a, b, a, -1), (a, a, b, a, -a), (a, a, b, a, -b), (a, a, b, b, a), (a, a, b, -1, a), (a, a, b, -a, a), (a, a, b, -a, -b), (a, a, b, -b, a), (a, a, b, -b, -a), (a, a, -1, a, b), (a, a, -1, b, a), (a, a, -1, -a, -b), (a, a, -1, -b, -a), (a, a, -a, 1, -b), (a, a, -a, a, b), (a, a, -a, b, a), (a, a, -a, b, -b), (a, a, -a, -1, -b), (a, a, -a, -a, -b), (a, a, -a, -b, 1), (a, a, -a, -b, -1), (a, a, -a, -b, -a), (a, a, -a, -b, -b), (a, a, -b, 1, -a), (a, a, -b, -1, -a), (a, a, -b, -a, 1), (a, a, -b, -a, -1), (a, a, -b, -a, -a), (a, a, -b, -a, -b), (a, a, -b, -b, -a), (a, b, 1, a, b), (a, b, 1, b, a), (a, b, 1, -a, -b), (a, b, 1, -b, -a), (a, b, a, 1, b), (a, b, a, a, b), (a, b, a, b, 1), (a, b, a, b, a), (a, b, a, b, b), (a, b, a, b, -1), (a, b, a, b, -a), (a, b, a, b, -b), (a, b, a, -1, b), (a, b, a, -a, b), (a, b, a, -a, -b), (a, b, a, -b, b), (a, b, a, -b, -a), (a, b, b, 1, a), (a, b, b, a, 1), (a, b, b, a, a), (a, b, b, a, b), (a, b, b, a, -1), (a, b, b, a, -a), (a, b, b, a, -b), (a, b, b, b, a), (a, b, b, -1, a), (a, b, b, -a, a), (a, b, b, -a, -b), (a, b, b, -b, a), (a, b, b, -b, -a), (a, b, -1, a, b), (a, b, -1, b, a), (a, b, -1, -a, -b), (a, b, -1, -b, -a), (a, b, -a, 1, -b), (a, b, -a, a, b), (a, b, -a, a, -b), (a, b, -a, b, a), (a, b, -a, b, -b), (a, b, -a, -1, -b), (a, b, -a, -a, -b), (a, b, -a, -b, 1), (a, b, -a, -b, a), (a, b, -a, -b, b), (a, b, -a, -b, -1), (a, b, -a, -b, -a), (a, b, -a, -b, -b), (a, b, -b, 1, -a), (a, b, -b, a, b), (a, b, -b, a, -a), (a, b, -b, b, a), (a, b, -b, b, -a), (a, b, -b, -1, -a), (a, b, -b, -a, 1), (a, b, -b, -a, a), (a, b, -b, -a, b), (a, b, -b, -a, -1), (a, b, -b, -a, -a), (a, b, -b, -a, -b), (a, b, -b, -b, -a), (a, -1, 1, a, b), (a, -1, 1, b, a), (a, -1, 1, -a, -b), (a, -1, 1, -b, -a), (a, -1, a, 1, b), (a, -1, a, a, b), (a, -1, a, b, 1), (a, -1, a, b, a), (a, -1, a, b, b), (a, -1, a, b, -1), (a, -1, a, b, -a), (a, -1, a, b, -b), (a, -1, a, -1, b), (a, -1, a, -a, b), (a, -1, b, 1, a), (a, -1, b, a, 1), (a, -1, b, a, a), (a, -1, b, a, b), (a, -1, b, a, -1), (a, -1, b, a, -a), (a, -1, b, a, -b), (a, -1, b, b, a), (a, -1, b, -1, a), (a, -1, b, -a, a), (a, -1, b, -a, -b), (a, -1, b, -b, a), (a, -1, b, -b, -a), (a, -1, -1, a, b), (a, -1, -1, b, a), (a, -1, -1, -a, -b), (a, -1, -1, -b, -a), (a, -1, -a, 1, -b), (a, -1, -a, a, b), (a, -1, -a, b, a), (a, -1, -a, b, -b), (a, -1, -a, -1, -b), (a, -1, -a, -a, -b), (a, -1, -a, -b, 1), (a, -1, -a, -b, -1), (a, -1, -a, -b, -a), (a, -1, -a, -b, -b), (a, -1, -b, 1, -a), (a, -1, -b, -1, -a), (a, -1, -b, -a, 1), (a, -1, -b, -a, -1), (a, -1, -b, -a, -a), (a, -1, -b, -a, -b), (a, -1, -b, -b, -a), (a, -a, 1, a, b), (a, -a, 1, b, a), (a, -a, 1, -a, -b), (a, -a, 1, -b, -a), (a, -a, a, 1, b), (a, -a, a, a, b), (a, -a, a, b, 1), (a, -a, a, b, a), (a, -a, a, b, b), (a, -a, a, b, -1), (a, -a, a, b, -a), (a, -a, a, b, -b), (a, -a, a, -1, b), (a, -a, a, -a, b), (a, -a, b, 1, a), (a, -a, b, a, 1), (a, -a, b, a, a), (a, -a, b, a, b), (a, -a, b, a, -1), (a, -a, b, a, -a), (a, -a, b, a, -b), (a, -a, b, b, a), (a, -a, b, -1, a), (a, -a, b, -a, a), (a, -a, b, -a, -b), (a, -a, b, -b, a), (a, -a, b, -b, -a), (a, -a, -1, a, b), (a, -a, -1, b, a), (a, -a, -1, -a, -b), (a, -a, -1, -b, -a), (a, -a, -a, 1, -b), (a, -a, -a, a, b), (a, -a, -a, b, a), (a, -a, -a, b, -b), (a, -a, -a, -1, -b), (a, -a, -a, -a, -b), (a, -a, -a, -b, 1), (a, -a, -a, -b, -1), (a, -a, -a, -b, -a), (a, -a, -a, -b, -b), (a, -a, -b, 1, -a), (a, -a, -b, -1, -a), (a, -a, -b, -a, 1), (a, -a, -b, -a, -1), (a, -a, -b, -a, -a), (a, -a, -b, -a, -b), (a, -a, -b, -b, -a), (a, -b, 1, a, b), (a, -b, 1, b, a), (a, -b, 1, -a, -b), (a, -b, 1, -b, -a), (a, -b, a, 1, b), (a, -b, a, a, b), (a, -b, a, b, 1), (a, -b, a, b, a), (a, -b, a, b, b), (a, -b, a, b, -1), (a, -b, a, -1, b), (a, -b, b, 1, a), (a, -b, b, a, 1), (a, -b, b, a, a), (a, -b, b, a, b), (a, -b, b, a, -1), (a, -b, b, b, a), (a, -b, b, -1, a), (a, -b, -1, a, b), (a, -b, -1, b, a), (a, -b, -1, -a, -b), (a, -b, -1, -b, -a), (a, -b, -a, 1, -b), (a, -b, -a, -1, -b), (a, -b, -a, -a, -b), (a, -b, -a, -b, 1), (a, -b, -a, -b, -1), (a, -b, -a, -b, -a), (a, -b, -a, -b, -b), (a, -b, -b, 1, -a), (a, -b, -b, -1, -a), (a, -b, -b, -a, 1), (a, -b, -b, -a, -1), (a, -b, -b, -a, -a), (a, -b, -b, -a, -b), (a, -b, -b, -b, -a), (b, 1, 1, b, a), (c, 1, 1, a, b), (c, 1, a, 1, b), (c, 1, a, b, 1), (c, 1, a, b, b), (c, 1, a, b, c), (c, 1, a, b, -1), (c, 1, a, b, -a), (c, 1, a, b, -b), (c, 1, a, c, b), (c, 1, a, -1, b), (c, 1, a, -a, -b), (c, 1, a, -b, c), (c, 1, a, -b, -c), (c, 1, a, -c, b), (c, 1, a, -c, -b), (c, 1, c, a, b), (c, 1, c, a, -b), (c, 1, -1, a, b), (c, 1, -a, b, a), (c, 1, -a, -a, -b), (c, 1, -a, -b, -a), (c, 1, -c, a, b), (c, 1, -c, -a, b), (c, a, 1, b, a), (c, a, 1, b, c), (c, a, 1, -a, -b), (c, a, 1, -b, -c), (c, a, a, 1, b), (c, a, a, a, b), (c, a, a, b, 1), (c, a, a, b, a), (c, a, a, b, b), (c, a, a, b, -1), (c, a, a, b, -b), (c, a, a, b, -c), (c, a, a, c, b), (c, a, a, -1, b), (c, a, a, -a, -b), (c, a, a, -b, b), (c, a, a, -b, c), (c, a, a, -b, -c), (c, a, a, -c, -b), (c, a, b, 1, a), (c, a, b, 1, c), (c, a, b, a, 1), (c, a, b, a, a), (c, a, b, a, b), (c, a, b, a, c), (c, a, b, a, -1), (c, a, b, a, -a), (c, a, b, a, -c), (c, a, b, b, c), (c, a, b, c, 1), (c, a, b, c, a), (c, a, b, c, b), (c, a, b, c, c), (c, a, b, c, -1), (c, a, b, c, -a), (c, a, b, c, -c), (c, a, b, -1, a), (c, a, b, -1, c), (c, a, b, -a, c), (c, a, b, -a, -c), (c, a, b, -b, c), (c, a, b, -b, -c), (c, a, b, -c, c), (c, a, b, -c, -b), (c, a, c, 1, b), (c, a, c, a, b), (c, a, c, b, 1), (c, a, c, b, b), (c, a, c, b, c), (c, a, c, b, -1), (c, a, c, b, -b), (c, a, c, b, -c), (c, a, c, c, b), (c, a, c, -1, b), (c, a, c, -a, b), (c, a, c, -b, a), (c, a, c, -b, -a), (c, a, c, -b, -c), (c, a, c, -c, b), (c, a, -1, a, b), (c, a, -1, b, a), (c, a, -1, b, c), (c, a, -1, -a, -b), (c, a, -1, -b, -a), (c, a, -1, -b, -c), (c, a, -1, -c, -b), (c, a, -a, 1, -b), (c, a, -a, a, b), (c, a, -a, a, -b), (c, a, -a, b, c), (c, a, -a, b, -c), (c, a, -a, c, -b), (c, a, -a, -1, -b), (c, a, -a, -a, -b), (c, a, -a, -b, 1), (c, a, -a, -b, a), (c, a, -a, -b, c), (c, a, -a, -b, -1), (c, a, -a, -b, -a), (c, a, -a, -b, -c), (c, a, -a, -c, b), (c, a, -a, -c, -b), (c, a, -b, 1, -c), (c, a, -b, a, c), (c, a, -b, a, -c), (c, a, -b, b, c), (c, a, -b, b, -a), (c, a, -b, b, -c), (c, a, -b, c, a), (c, a, -b, c, b), (c, a, -b, c, -c), (c, a, -b, -1, -c), (c, a, -b, -a, 1), (c, a, -b, -a, a), (c, a, -b, -a, c), (c, a, -b, -a, -1), (c, a, -b, -a, -a), (c, a, -b, -a, -c), (c, a, -b, -b, -c), (c, a, -b, -c, 1), (c, a, -b, -c, a), (c, a, -b, -c, c), (c, a, -b, -c, -a), (c, a, -b, -c, -b), (c, a, -b, -c, -c), (c, a, -c, 1, -b), (c, a, -c, a, b), (c, a, -c, b, a), (c, a, -c, b, -a), (c, a, -c, b, -b), (c, a, -c, c, b), (c, a, -c, -1, -b), (c, a, -c, -a, b), (c, a, -c, -a, -b), (c, a, -c, -b, a), (c, a, -c, -b, c), (c, a, -c, -b, -1), (c, a, -c, -b, -a), (c, a, -c, -b, -b), (c, a, -c, -b, -c), (c, c, 1, a, b), (c, c, a, a, b), (c, c, a, b, a), (c, c, a, b, c), (c, c, a, b, -1), (c, c, a, b, -a), (c, c, a, b, -c), (c, c, a, c, -b), (c, c, a, -1, b), (c, c, a, -a, b), (c, c, a, -a, -b), (c, c, a, -b, b), (c, c, a, -b, -a), (c, c, a, -c, b), (c, c, c, a, b), (c, c, c, -a, b), (c, c, -1, a, b), (c, c, -a, 1, -b), (c, c, -a, b, -c), (c, c, -a, -b, 1), (c, c, -a, -b, -b), (c, c, -c, a, b), (c, c, -c, a, -b), (c, -1, 1, a, b), (c, -1, a, a, b), (c, -1, a, b, a), (c, -1, a, b, c), (c, -1, a, b, -1), (c, -1, a, b, -a), (c, -1, a, b, -c), (c, -1, a, c, b), (c, -1, a, c, -b), (c, -1, a, -1, b), (c, -1, a, -a, b), (c, -1, a, -b, b), (c, -1, a, -b, -a), (c, -1, a, -c, b), (c, -1, a, -c, -b), (c, -1, c, a, b), (c, -1, c, -a, b), (c, -1, -1, a, b), (c, -1, -a, 1, -b), (c, -1, -a, b, -c), (c, -1, -a, -b, 1), (c, -1, -a, -b, -b), (c, -1, -c, a, b), (c, -1, -c, a, -b), (c, -a, 1, b, a), (c, -a, 1, b, c), (c, -a, 1, -a, -b), (c, -a, 1, -b, -c), (c, -a, 1, -c, -b), (c, -a, a, b, b), (c, -a, a, c, -b), (c, -a, a, -b, b), (c, -a, b, 1, a), (c, -a, b, 1, c), (c, -a, b, a, b), (c, -a, b, b, a), (c, -a, b, c, 1), (c, -a, b, c, a), (c, -a, b, c, c), (c, -a, b, c, -1), (c, -a, b, -1, a), (c, -a, b, -1, c), (c, -a, b, -a, -b), (c, -a, b, -c, -b), (c, -a, c, 1, b), (c, -a, c, b, 1), (c, -a, c, b, b), (c, -a, c, b, c), (c, -a, c, b, -1), (c, -a, c, b, -b), (c, -a, c, -1, b), (c, -a, c, -b, a), (c, -a, c, -c, b), (c, -a, -1, b, c), (c, -a, -1, c, b), (c, -a, -1, -b, -c), (c, -a, -1, -c, -b), (c, -a, -a, a, -b), (c, -a, -a, c, b), (c, -a, -a, -b, c), (c, -a, -a, -c, -b), (c, -a, -b, a, b), (c, -a, -b, b, -a), (c, -a, -b, -1, -c), (c, -a, -b, -a, b), (c, -a, -b, -b, -a), (c, -a, -b, -b, -c), (c, -a, -b, -c, a), (c, -a, -b, -c, -1), (c, -a, -b, -c, -b), (c, -a, -b, -c, -c), (c, -a, -c, 1, -b), (c, -a, -c, b, -b), (c, -a, -c, -a, b), (c, -a, -c, -b, 1), (c, -a, -c, -b, c), (c, -a, -c, -b, -b), (c, -a, -c, -b, -c), (c, -a, -c, -c, -b), (c, -c, 1, a, b), (c, -c, a, a, b), (c, -c, a, b, a), (c, -c, a, b, c), (c, -c, a, b, -1), (c, -c, a, b, -a), (c, -c, a, b, -c), (c, -c, a, c, b), (c, -c, a, c, -b), (c, -c, a, -1, b), (c, -c, a, -a, b), (c, -c, a, -b, b), (c, -c, a, -b, -a), (c, -c, a, -c, b), (c, -c, a, -c, -b), (c, -c, c, a, b), (c, -c, c, -a, b), (c, -c, -1, a, b), (c, -c, -a, 1, -b), (c, -c, -a, b, -c), (c, -c, -a, -b, 1), (c, -c, -a, -b, -b), (c, -c, -c, a, b), (c, -c, -c, a, -b), (-1, 1, 1, a, b), (-1, 1, a, 1, b), (-1, 1, a, a, b), (-1, 1, a, b, 1), (-1, 1, a, b, a), (-1, 1, a, b, b), (-1, 1, a, b, -1), (-1, 1, a, b, -a), (-1, 1, a, b, -b), (-1, 1, a, -1, b), (-1, 1, a, -a, b), (-1, 1, c, a, b), (-1, 1, c, -a, b), (-1, 1, -1, b, a), (-1, a, 1, b, a), (-1, a, 1, -a, -b), (-1, a, 1, -b, -a), (-1, a, a, 1, b), (-1, a, a, a, b), (-1, a, a, b, 1), (-1, a, a, b, a), (-1, a, a, b, b), (-1, a, a, b, -1), (-1, a, a, b, -a), (-1, a, a, b, -b), (-1, a, a, -1, b), (-1, a, a, -a, b), (-1, a, b, 1, a), (-1, a, b, a, 1), (-1, a, b, a, a), (-1, a, b, a, b), (-1, a, b, a, -1), (-1, a, b, a, -a), (-1, a, b, a, -b), (-1, a, b, b, a), (-1, a, b, -1, a), (-1, a, b, -a, a), (-1, a, b, -a, -b), (-1, a, b, -b, a), (-1, a, b, -b, -a), (-1, a, -1, a, b), (-1, a, -1, b, a), (-1, a, -1, -a, -b), (-1, a, -1, -b, -a), (-1, a, -a, 1, -b), (-1, a, -a, a, b), (-1, a, -a, b, a), (-1, a, -a, b, -b), (-1, a, -a, -1, -b), (-1, a, -a, -a, -b), (-1, a, -a, -b, 1), (-1, a, -a, -b, -1), (-1, a, -a, -b, -a), (-1, a, -a, -b, -b), (-1, a, -b, 1, -a), (-1, a, -b, -1, -a), (-1, a, -b, -a, 1), (-1, a, -b, -a, -1), (-1, a, -b, -a, -a), (-1, a, -b, -a, -b), (-1, a, -b, -b, -a), (-1, b, 1, b, a), (-1, c, 1, a, b), (-1, c, a, 1, b), (-1, c, a, b, 1), (-1, c, a, b, b), (-1, c, a, b, c), (-1, c, a, b, -1), (-1, c, a, b, -b), (-1, c, a, c, b), (-1, c, a, c, -b), (-1, c, a, -1, b), (-1, c, a, -a, b), (-1, c, a, -a, -b), (-1, c, a, -b, b), (-1, c, a, -b, c), (-1, c, a, -b, -c), (-1, c, a, -c, -b), (-1, c, c, a, -b), (-1, c, -1, a, b), (-1, c, -a, b, a), (-1, c, -a, -a, -b), (-1, c, -a, -b, -a), (-1, c, -c, a, b), (-1, c, -c, -a, b), (-1, -1, 1, b, a), (-1, -1, a, a, b), (-1, -1, a, b, 1), (-1, -1, a, b, a), (-1, -1, a, b, b), (-1, -1, a, b, -1), (-1, -1, a, b, -a), (-1, -1, a, b, -b), (-1, -1, a, -1, b), (-1, -1, a, -a, b), (-1, -1, b, 1, a), (-1, -1, c, a, b), (-1, -1, c, -a, b), (-1, -1, -1, -c, -b), (-1, -c, 1, a, b), (-1, -c, a, 1, b), (-1, -c, a, a, b), (-1, -c, a, b, 1), (-1, -c, a, b, a), (-1, -c, a, b, -1), (-1, -c, a, c, -b), (-1, -c, a, -1, b), (-1, -c, a, -b, -a), (-1, -c, -1, -a, -b), (-1, -c, -a, b, c), (-1, -c, -a, -b, -b), (-1, -c, -c, -a, b), (-a, 1, 1, c, b), (-c, 1, a, 1, b), (-c, 1, a, b, 1), (-c, 1, a, b, a), (-c, 1, a, b, b), (-c, 1, a, b, -1), (-c, 1, a, b, -b), (-c, 1, a, -1, b), (-c, 1, a, -b, c), (-c, 1, -1, a, b), (-c, 1, -a, a, b), (-c, 1, -a, -a, -b), (-c, 1, -a, -c, -b), (-c, a, 1, a, b), (-c, a, 1, c, b), (-c, a, 1, -a, -b), (-c, a, 1, -c, -b), (-c, a, a, b, 1), (-c, a, a, b, b), (-c, a, a, b, -1), (-c, a, a, b, -a), (-c, a, a, -1, b), (-c, a, a, -b, -a), (-c, a, b, 1, a), (-c, a, b, a, a), (-c, a, b, a, b), (-c, a, b, a, -a), (-c, a, b, b, a), (-c, a, b, c, -1), (-c, a, b, -1, a), (-c, a, b, -b, a), (-c, a, c, b, 1), (-c, a, c, c, b), (-c, a, c, -c, b), (-c, a, -1, b, a), (-c, a, -1, -a, -b), (-c, a, -1, -b, -a), (-c, a, -1, -c, -b), (-c, a, -a, b, -b), (-c, a, -a, -1, -b), (-c, a, -a, -a, -b), (-c, a, -a, -b, 1), (-c, a, -a, -b, -1), (-c, a, -a, -b, -a), (-c, a, -a, -b, -b), (-c, a, -b, 1, -a), (-c, a, -b, b, -a), (-c, a, -b, -1, -a), (-c, a, -b, -a, a), (-c, a, -b, -a, -1), (-c, a, -b, -a, -a), (-c, a, -b, -b, -a), (-c, a, -b, -c, b), (-c, a, -c, a, b), (-c, a, -c, -b, a), (-c, c, 1, -a, -b), (-c, c, a, 1, b), (-c, c, a, b, 1), (-c, c, a, b, b), (-c, c, a, b, -1), (-c, c, a, b, -c), (-c, c, a, -1, b), (-c, c, a, -b, b), (-c, c, -1, a, b), (-c, c, -a, a, -b), (-c, c, -a, -a, -b), (-c, c, -a, -b, -a), (-c, -1, 1, -a, -b), (-c, -1, a, 1, b), (-c, -1, a, b, 1), (-c, -1, a, b, b), (-c, -1, a, b, -1), (-c, -1, a, b, -c), (-c, -1, a, -1, b), (-c, -1, a, -b, b), (-c, -1, -1, a, b), (-c, -1, -a, a, -b), (-c, -1, -a, -a, -b), (-c, -1, -a, -b, -a), (-c, -a, 1, b, a), (-c, -a, 1, -b, -a), (-c, -a, 1, -c, -b), (-c, -a, a, 1, b), (-c, -a, a, b, -a), (-c, -a, b, 1, c), (-c, -a, b, a, 1), (-c, -a, b, a, b), (-c, -a, b, c, 1), (-c, -a, b, c, -a), (-c, -a, c, b, -1), (-c, -a, c, -1, b), (-c, -a, -1, -a, -b), (-c, -a, -a, 1, -b), (-c, -a, -a, -a, -b), (-c, -a, -a, -b, -a), (-c, -a, -b, a, b), (-c, -a, -b, -a, 1), (-c, -a, -b, -a, -1), (-c, -a, -b, -b, -c), (-c, -a, -b, -c, -b), (-c, -a, -c, a, -b), (-c, -a, -c, -c, -b), (-c, -c, 1, a, b), (-c, -c, a, 1, b), (-c, -c, a, a, b), (-c, -c, a, b, 1), (-c, -c, a, b, a), (-c, -c, a, b, b), (-c, -c, a, b, -1), (-c, -c, a, b, -c), (-c, -c, a, -1, b), (-c, -c, a, -b, b), (-c, -c, -1, a, b), (-c, -c, -a, c, -b), (-c, -c, -a, -c, b)$\}$\par}

%%%%%%%%%%%%%%%%%%%%%%%%%%%%%%%%%%%%%%%%%%%%%%%%%%%%%%%%%%%%%%%%%%%%%%%%%%%%%%%%%%%%%%%

\vskip 8pt

\noindent{\small\sc Department of Physics, Astronomy and Mathematics, University of Hertfordshire, 
College Lane, Hatfield, 
AL10 9AB, United Kingdom

\noindent E-mail: {\tt m.galdeano@herts.ac.uk}}

\vskip 8pt

\noindent{\small\sc Department of Mathematics, Imperial College London, Exhibition Rd, London SW7 2BX, United Kingdom;

\vskip 2pt 

\noindent I-X Centre for AI In Science, Imperial College London, White City Campus, 84 Wood Lane, London W12 0BZ, United Kingdom

\noindent E-mail: {\tt daniel.platt.berlin@gmail.com}}

\vskip 8pt

\noindent{\small\sc Beijing Institute of Mathematical Sciences and Applications (BIMSA), No. 544, Hefangkou Village, Huaibei Town, Huairou District, Beijing 101408, China

\noindent E-mail: {\tt ytanaka@bimsa.cn}}

\vskip 8pt

\noindent{\small\sc Institute for Advanced Study, Princeton, NJ 08540, U.S.A. 

\noindent E-mail: {\tt luyawang@ias.edu}}

%%%%%%%%%%%%%%%%%%%%%%%%%%%%%%%%%%%%%%%%%%%%%%%%%%%%%%%%%%%%%%%%%%%%%%%%%%%%%%%%%%%%%%%%%%%%%%%%%%%%%%

\end{document}